\def\NM{{\mathbb{N}}}
\def\PM{{\mathbb{P}}}
\def\RM{{\mathbb{R}}}
\def\QM{{\mathbb{Q}}}
\def\FM{{\mathbb{F}}}
\def\ZM{{\mathbb{Z}}}
\def\mG{{\mathfrak m}}
\def\SG{{\mathfrak S}}
\def\CC{{\mathcal C}}
\def\HC{{\mathcal H}}
\def\NC{{\mathcal N}}
\def\RC{{\mathcal R}}
\def\OC{{\mathcal O}}
\def\MC{{\mathcal M}}
\def\KC{{\mathcal K}}
\def\PC{{\mathcal P}}
\def\LC{{\mathcal L}}
\def\EC{{\mathcal E}}
\def\subsetneq{\varsubsetneq}
\def\simto{\buildrel\hbox{\tiny{$\sim$}}\over\longrightarrow}
\def\leq{\leqslant}
\def\geq{\geqslant}
\def\injo{\hookrightarrow}
\def\id{\mathop{\mathrm{Id}}\nolimits}
\def\ba{\backslash}
\def\wt{\widetilde}
\def\wh{\widehat}
\def\application#1#2#3#4#5{\begin{array}{rcl}
                            #1 \;\;\; #2 & \to &  #3 \\
                              #4 & \mapsto & #5 
                            \end{array}} 
\def\cas#1#2#3#4#5{\begin{array}{rcl} #1 \; & = &
    \left\{\begin{array}{rcl} #2 & \hbox{ if } & #3 \\
                             #4 & \hbox{ if } & #5  \end{array}
                         \right. \end{array}}
\def\To#1{\buildrel\hbox{\tiny{$#1$}}\over\longrightarrow}
\def\to{\rightarrow}
\def\hom#1#2#3{\hbox{\rm Hom}_{#3}\>\!\left(#1,#2\right)}
\def\endo#1#2{\hbox{\sl End}_{#1}\>\!\left(#2\right)}
\def\Hom{\mathop{\hbox{\rm Hom}}\nolimits}
\def\Rhom{\mathop{R\hbox{\rm Hom}}\nolimits}
\def\Rep{{\rm {R}ep}}
\def\Mo#1#2{\mathop{\Rep^{\infty}_{#1}(#2)}}
\def\Irr#1#2{\mathop{{\rm Irr}_{#1}\left(#2\right)}}
\def\ind#1#2#3{\hbox {\rm Ind}_{#1}^{#2}\>\!\left(#3\right)}  
\def\cInd#1#2{\hbox {\rm ind}_{#1}^{#2}}
\def\cind#1#2#3{\hbox {\rm ind}_{#1}^{#2}\>\!\left(#3\right)} 
\def\ip#1#2#3{\hbox {\sl i}_{#1}^{#2}\>\!(#3)}  
\def\dim{\mathop{\mbox{\rm dim}}\nolimits}
\def\val{\mathop{{\rm val}}\nolimits}
\def\ext#1#2#3#4{\mbox{\rm Ext}^{#1}_{#4}\>\!\left(#2,#3\right)}
\def\exp{\mathop{\EC\hbox{\rm xp}\,}}
\renewcommand{\subsubsection}{\@startsection{subsubsection}{3}{\parindent}{-\baselineskip}{-0.01\baselineskip}{\bf}}
\renewcommand*{\@seccntformat}[1]{%
  \csname the#1\endcsname\
}
\def\ali{\subsubsection{}\setcounter{equation}{0}}
\def\alin#1{\setcounter{equation}{0}\subsubsection{\it  #1}. }
\newtheoremstyle{th}
  {\baselineskip}{.5\baselineskip}{\itshape}
  {\parindent}{\bf}
  {--}{.5em}{}
\newtheoremstyle{def}
  {\baselineskip}{\baselineskip}{}
  {\parindent}{\bf}
  {--}{.5em}{}
\newtheoremstyle{th*}
  {.5\baselineskip}{.5\baselineskip}{\itshape}
  {\parindent}{\bf}
  {--}{.5em}{}
\newtheoremstyle{remark*}
  {.5\baselineskip}{.5\baselineskip}{}
  {\parindent}{\bf}
  {--}{.5em}{}
\newtheoremstyle{remark}
  {.5\baselineskip}{.5\baselineskip}{}
  {\parindent}{\bf}
  {--}{.5em}{}
\theoremstyle{th}
\newtheorem{theo}[subsubsection]{\sc Theorem.\bf}
\newtheorem{prop}[subsubsection]{\sc Proposition.\bf}
\theoremstyle{def}
\theoremstyle{remark}
\theoremstyle{th*}
\newtheorem*{thm}{\sc Theorem.}
\newtheorem*{lem}{\sc Lemma.}
\newtheorem*{pro}{\sc Proposition.}
\newtheorem*{cor}{\sc Corollary.}
\newtheorem*{defn}{\sc Definition.}
\newtheorem*{fac}{\sc Fact.}
\theoremstyle{remark*}
\newtheorem*{rak}{\sc Remark.}
\newtheorem*{no}{\sc Notation.}
\newcommand{\findem}{\hfill$\Box$\par\medskip}
\newcommand{\dem}{\indent {\it Preuve :} \rm }
\title{A geometric interpretation of the nilpotent part of local Langlands correspondence modulo
  $\ell$}
\author{Jean-Fran\c cois Dat}
\date{}
\begin{document}
\maketitle
\bibliographystyle{plain}

\def\la{\langle}
\def\ra{\rangle}
\def\knr{{\wh{K^{nr}}}}
\def\ka{\wh{K^{ca}}}

\abstract{ Let $p$ and $\ell$ be two distinct primes.
The aim of this paper is to show 
how, under a certain congruence hypothesis,  the mod $\ell$
  cohomology complex of the Lubin-Tate tower, together with a natural
  Lefschetz operator, provides a
  geometric interpretation of Vign\'eras' local Langlands correspondence
  modulo $\ell$.
}

\def\dd{D_d^\times}
\def\mdro{\MC_{Dr,0}}
\def\mdrn{\MC_{Dr,n}}
\def\mdr{\MC_{Dr}}
\def\mlto{\MC_{LT,0}}
\def\mltn{\MC_{{\rm LT},n}}
\def\mltno{\MC_{{\rm LT},n}^{(0)}}
\def\mlt{\MC_{\rm LT}}
\def\mltK{\MC_{LT,K}}
\def\LJ{{\rm LJ}}
\def\JL{{\rm JL}}
\def\SL{{\rm SL}}
\def\GL{{\rm GL}}

\def\Ql{\QM_{\ell}}
\def\Zl{\ZM_{\ell}}
\def\Fl{\FM_{\ell}}
\def\oQl{\overline\QM_{\ell}}
\def\oZl{\overline\ZM_{\ell}}
\def\oFl{\overline\FM_{\ell}}
\def\mltnc{\wh\MC_{{\tiny{\rm LT}},n}}
\def\mltnco{\wh\MC_{{\tiny{\rm LT}},n}^{(0)}}

\def\sp{{\rm sp}}

\tableofcontents

\section{Main theorem}
Let $K$ be a local non-archimedean field with
ring of integers $\OC$ and residue field $k\simeq \FM_{q}$, 
$q$ a power of a prime $p$. Let $\ell$ be another prime number and $d$
an integer. As in \cite{ltmodl}, we consider the cohomology complex
$$ R\Gamma_{c}(\mlt^{\rm ca},\Zl) \in D^{b}(\Rep^{\infty,
  c}_{\Zl}(G\times D^{\times}\times W_{K}))$$
of the height $d$ Lubin-Tate tower of $K$. Here $G=\GL_{d}(K)$, $D$ is
the division algebra which is central over $K$ with invariant $1/d$,
and $W_{K}$ is the Weil group of $K$. The category $\Rep^{\infty,c}_{\Zl}$
consists of $\Zl$-representations of the triple product which are smooth for
$G$ and $D^{\times}$ and continuous for $W_{K}$.
In \cite{lefschetz}, we defined a Lefschetz operator 
$$ L:\, R\Gamma_{c}(\mlt^{\rm ca},\Zl) \To{}  R\Gamma_{c}(\mlt^{\rm
  ca},\Zl)[2](1) $$
as the cup-product by the Chern class of the tautological
 invertible sheaf on the associated Gross-Hopkins period
domain. 

To an irreducible $\oFl$-representation $\pi$ of $G$, we
associate its ``derived $\pi$-coisotypical part''
$$ R_{\pi}:=\Rhom_{\Zl G}(R\Gamma_{c}(\mlt^{\rm ca},\Zl), \pi) \in
D^{b}(\Rep^{\infty}_{\oFl}(D^{\times}\times W_{K}))$$ 
which inherits a morphism $L_{\pi}:\, R_{\pi}\To{}R_{\pi}[2](1)$. 
We also denote by $R^{*}_{\pi}$ the total
cohomology of  $R_{\pi}$, a smooth graded $\oFl$-representation of
$W_{K}\times D^{\times}$, and by $L^{*}_{\pi}:\,
R^{*}_{\pi}\To{}R^{*}_{\pi}[2](1)$ the corresponding morphism.
Our aim here is to prove the following theorem, where we forget the grading.

\begin{thm}
  Assume that the multiplicative order of $q$ mod $\ell$ is $d$. Then
  for any unipotent irreducible representation $\pi$ of $\GL_{d}(K)$,
  there is an isomorphism
$$ (R_{\pi}^{*},{L}_{\pi}^{*})^{\rm ss}\simeq 
|\LJ(\pi)|\otimes(\sigma^{\rm ss}(\pi),L(\pi)).$$
\end{thm}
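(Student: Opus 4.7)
The plan is to reduce the mod $\ell$ statement to its already-understood $\oQl$ counterpart by lifting unipotent $\oFl$-representations to characteristic zero and exploiting the integrality of both the complex $R\Gamma_{c}(\mlt^{\rm ca},\Zl)$ and the Lefschetz operator $L$. The three ingredients are: (a) the characteristic-zero version of the theorem, which combines the Harris--Taylor--Boyer description of the Lubin-Tate cohomology on the unipotent block with the identification, due to \cite{lefschetz}, of the cup-product by the Chern class with the monodromy operator attached by local Langlands; (b) a compatibility of Vign\'eras' mod $\ell$ correspondence with the reduction of its characteristic-zero counterpart on the unipotent block, valid precisely under the hypothesis that $q$ has order $d$ mod $\ell$; and (c) a change-of-coefficients argument for the $\Rhom$ that computes $R_{\pi}$.

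Concretely, I would choose a unipotent $\oQl$-representation $\wt\pi$ lifting $\pi$, verify using (b) that the Vign\'eras parameter $(\sigma^{\rm ss}(\pi),L(\pi))$ and the multiplicity $|\LJ(\pi)|$ are the semisimplified mod $\ell$ reductions of the corresponding data attached to $\wt\pi$, and then compare $R_{\pi}$ with $R_{\wt\pi}$. Using a $G$-stable $\oZl$-lattice $\wt\pi^{\circ}\subset\wt\pi$, which produces a short exact sequence of $\Zl G$-modules relating $\wt\pi^{\circ}$ to $\pi$, together with the structural results of \cite{ltmodl} (which provide sufficient projectivity of $R\Gamma_{c}(\mlt^{\rm ca},\Zl)$ in the unipotent block), one obtains a derived base-change identification $R_{\wt\pi^{\circ}}\otimes^{L}_{\oZl}\oFl\simeq R_{\pi}$. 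Passing to total cohomology and semisimplifying then yields the claimed isomorphism, and compatibility with the Lefschetz operator is automatic since $L$ is defined integrally.

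The main obstacle is controlling the non-semisimplicity that appears at each step. The graded representation $R_{\wt\pi}^{*}$ already carries non-trivial extensions coming from the monodromy filtration of $\sigma^{\rm ss}(\wt\pi)$, and the reduction mod $\ell$ introduces further extensions via $\mathrm{Tor}^{1}_{\oZl}$ which must be absorbed uniformly when semisimplifying. It is precisely here that the congruence hypothesis plays its crucial role: it forces the unipotent block on the spectral side to be governed by a single Iwahori-Hecke algebra at a primitive $d$-th root of unity, so that the only extensions that can appear are those already predicted by the monodromy operator, and these disappear after semisimplification. Without this hypothesis the unipotent block would split into several pieces with incompatible decomposition matrices, and the clean isomorphism stated in the theorem could not be expected in the given form.
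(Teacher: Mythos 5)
Your reduction-from-characteristic-zero strategy has a genuine gap that I think is fatal to the argument as sketched. The plan hinges on choosing a unipotent $\oQl$-representation $\wt\pi$ ``lifting'' $\pi$ and then identifying $R_\pi$ with the mod-$\ell$ reduction of $R_{\wt\pi^\circ}$. But the decomposition matrix in this block (Proposition \ref{decomp}) is not the identity: for $I\subsetneq S$ one has $r_\ell(v_I(\oQl))=[\pi_I]+[\pi_{I\cup\{0\}}]$. Thus, except when $\pi$ is the trivial representation, there is \emph{no} unipotent $\wt\pi$ whose full reduction is $\pi$; the best you can do is take $\wt\pi=v_I(\oQl)$ so that $\pi$ is one of two constituents of $\bar\pi:=\wt\pi^\circ/\ell$. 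Your base-change isomorphism would then compute $R_{\bar\pi}$, not $R_\pi$, and the theorem requires separating the contributions of $\pi_I$ and $\pi_{I\cup\{0\}}$ inside $R_{\bar\pi}$ \emph{with their Lefschetz data}. Semisimplifying does not perform this separation: it only tells you $(R_{\pi_I}^{*})^{\rm ss}\oplus(R_{\pi_{I\cup\{0\}}}^{*})^{\rm ss}$ up to the spectral sequence error terms, but not how the dimensions and the operator distribute between the two summands. Isolating $(R_{\pi_I}^*, L_{\pi_I}^*)$ is precisely what the Ext computations of Section~\ref{sec:ellipt-princ-ser} (in particular Corollary \ref{calculext1} and Proposition \ref{calculext2}, which show $\ext{*}{\pi_J}{i_I(\oFl)}{G/\varpi^{\ZM}}$ vanishes when $0\in J$ and equals $\ext{*}{v_J(\oFl)}{i_I(\oFl)}{G/\varpi^{\ZM}}$ otherwise) accomplish, combined with the weight-splitting of the complex in \ref{splitting}; your sketch has no substitute for this step.

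There are two further points worth flagging. First, the claim that ``compatibility with the Lefschetz operator is automatic since $L$ is defined integrally'' is too quick: even granting the integral definition, one must show the induced map on each generalized eigenspace $(R_\pi^*)_{i,j}\to(R_\pi^{*+2})_{i-1,j}$ is nonzero in the cases where the source and target are both one-dimensional, and the paper's Theorem \ref{theoLpi} obtains this only by propagating the Gross--Hopkins computation for the trivial representation through the Ext-module structure over $\bigwedge^*Y_J$ and the spectral sequence edge maps. Second, the invocation of ``sufficient projectivity of $R\Gamma_c(\mlt^{\rm ca},\Zl)$ in the unipotent block'' from \cite{ltmodl} as justification for a derived base change is not something the paper establishes; what the paper actually uses is torsion-freeness of the cohomology (Proposition \ref{torsionfree}) together with the decomposition by Frobenius and $\Pi$ eigenvalues, which is specific to the Coxeter congruence. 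The characteristic-zero input you cite (Boyer's description plus the period map) is indeed the same initialization the paper uses, so that part is sound; the gap is in the reduction step, which cannot be done at the level of a single irreducible $\pi$ but must be carried out on the whole principal block with control of extensions.
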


The congruence condition on $q$ modulo $\ell$ will be called the
\emph{Coxeter congruence relation}, by analogy with the modular
Deligne-Lusztig theory where this condition arises in the context of
 Brou\'e's conjecture, see \cite{Dudas1} for example.
The terminology \emph{unipotent} was introduced by Vign\'eras to denote
representations that belong to the same block as the trivial representation.
Finite group theorists would rather call them \emph{principal block} representations.

\medskip

Let us explain the notation of the theorem. The symbol $\LJ(\pi)$ stands
for the Langlands-Jacquet transfer of \cite{jlmodl}. In general it is a virtual
$\oFl$-representation of $D^{\times}$, but under the congruence
hypothesis, it is known to be effective up to sign, \emph{cf}
\cite[(3.2.5)]{jlmodl}, so we
can put $\LJ(\pi)=\pm |\LJ(\pi)|$ for some semi-simple $\oFl$-representation
of $D^{\times}$. The symbol $(\sigma^{\rm ss}(\pi),L(\pi))$ denotes
the (transposed) Weil-Deligne
$\oFl$-representation  associated to $\pi$ by the Vign\'eras
correspondence of \cite[Thm 1.8.2]{VigLanglands}.
This is the Zelevinski-like normalization of the local Langlands correspondence
mod $\ell$.  
Therefore, to put it in english words, the above theorem offers a geometric
interpretation of the nilpotent part\footnote{Note that, in contrast to the $\ell$-adic setting, 
 this nilpotent part has no obvious arithmetic interpretation, in the
 sense that it cannot be related to any infinitesimal action of the
$\ell$-inertia of $W_{K}$.}
 of this Vign\'eras correspondence,
at least for those {unipotent} representations such that $\LJ(\pi)\neq
0$.


\medskip

Let us say a few words about the proof of the theorem.
 Note first that, since $\LJ(\pi)$ is most
often $0$, we are soon reduced to the case when $\pi$ is a
subquotient of the smooth representation $\ind{B}{G}{\oFl}$ induced
from the trivial representation of some Borel subgroup.
In section \ref{sec:ellipt-princ-ser} we classify these
subquotients in a suitable way, making thereby explicit the
corresponding block of the decomposition matrix of $G$, and
we compute the associated Weil-Deligne and $D^{\times}$ representations.
In section \ref{sec:cohomology-complex}, we study the unipotent summand
of the cohomoloy complex.
In particular, thanks to our congruence
hypothesis,  we may split it in a non-trivial way according to
weights.
Note that, in principle, all this study can be carried out in a purely local way,
using Yoshida's model of the tame Lubin-Tate space. However, for
reference convenience, we invoke at some point
Boyer's description of the cohomology of the whole tower in
\cite{Boyer2}, the proof of which uses global arguments.
An alternative argument uses
the  Faltings-Fargues isomorphism of \cite{FarFal}.
Then in section \ref{sec:proof-main-theorem} we prove the theorem, by
some fairly explicit computations. 

One crucial ingredient is that we
easily, and without any computation, get a complete description of
$(R_{\pi}^{*},L_{\pi}^{*})$ for $\pi$ the trivial representation,
thanks to the properties of the Gross-Hopkins
period map. The theorem above is expected to hold true for \emph{any}
smooth irreducible $\oFl$-representation $\pi$ under the congruence
hypothesis, but we are still missing some control on the pair $(R_{\pi}^{*},L_{\pi}^{*})$
when $\pi$ is a general Speh representation.

\section{Elliptic principal series}\label{sec:ellipt-princ-ser}

By definition, an irreducible smooth $\oFl$-representation is called
\emph{elliptic} if it is not a linear combination of proper
parabolically induced representations. Note that by \cite[Thm 3.1.4]{jlmodl},
this is equivalent to $\LJ(\pi)\neq 0$. 
According to \cite[Cor 3.2.2]{jlmodl}\footnote{where unfortunately the
  term elliptic has a slightly different meaning}, any elliptic
principal series is
an unramified twist of a subquotient of the
induced representation $\ind{B}{G}{\oFl}$ for some Borel
subgroup $B$. The converse is not true in general, 
but it is true under the Coxeter congruence relation,
as we will see below.


\subsection{Parametrization and decomposition matrix}

\alin{A reminder on the $\ell$-adic case} \label{reminder}
We denote by $B$ the subgroup of upper triangular matrices in $G$ and
by $S$ the
set of simple roots of the diagonal torus $T$ in $\hbox{Lie}(B)$.
The power set $\PC(S)$ of $S$ is in $1$-to-$1$ correspondence with the set of
parabolic subgroups containing $B$. Namely, to each subset
 $I\subset S$ is associated the unique parabolic subgroup $P_{I}$
with $\hbox{Lie}(P_{I}) = \hbox{Lie}(B)+\sum_{\alpha\in \ZM\langle I\rangle}
\hbox{Lie}(G)_{\alpha}$.
In particular we have $P_{\emptyset}=B$ and $P_{S}=G$. 

\begin{defn}
  For any ring $R$, we put $i_{I}(R):=\ind{P_{I}}{G}{R}$, and
  $$v_{I}(R):=i_{I}(R)/\sum_{J\supset I} i_{J}(R).$$
\end{defn}

Let $\delta_{B}$ denote the $R$-valued modulus character of
$B$. We assume that $R$ contains a square root of $q$ in $R$ and we
choose such a root in order to define
$\delta:=\delta_{B}^{-\frac{1}{2}}$ as well as the normalized Jacquet
functor $r_{B}$ along $B$. 
Write
$X:=X_{*}(T)\otimes \RM$, so that $S$ is naturally a subset of the
dual $\RM$-vector space of $X$. Following \cite[2.2.3]{dp}, we
associate to each subset 
$I\in\PC(S)$ a connected component 
$$X_{I}:=\{x\in X,\,\forall\alpha\in S,
\varepsilon_{I}(\alpha)\langle x,\alpha\rangle>0\}$$ 
of the complement of the union of simple root hyperplanes in $X$. Here
$\varepsilon_{I}$ is the sign function on $S$ which takes $\alpha$ to
$-1$ if and only if  $\alpha\in I$. In particular, $X_{\emptyset}$ is
the Weyl chamber associated to $B$ and $X_{S}$ is that associated to the
opposite Borel subgroup.

\begin{fac}[\cite{dp}, Lemme 2.3.3] If $R$ is a field of
  characteristic  prime to $\prod_{i=1}^{d}(q^{i}-1)$, then the following hold :
\begin{enumerate}
\item for each $I\subseteq S$, the $R$-representation  $v_{I}(R)$ is
  irreducible and we have 
$$r_{B}(v_{I}(R))=\bigoplus_{w(X_{S})\subseteq X_{I}}
   w^{-1}(\delta).$$  
\item the multiset ${\rm JH}(\ind{B}{G}{R})$ of Jordan-H\"older
  factors of $\ind{B}{G}{R}$ has multiplicity one, hence is a set, and
the map 
$$ I\in \PC(S) \mapsto v_{I}(R) \in {\rm JH}(\ind{B}{G}{R})$$
is a bijection.
\end{enumerate}
\end{fac}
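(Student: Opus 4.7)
The plan is to establish both claims by computing the Jacquet modules $r_{B}(i_{I}(R))$ explicitly, and using the banal hypothesis to split the natural filtrations into direct sums of pairwise distinct characters.

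I would begin with the geometric lemma of Bernstein--Zelevinsky applied to $\ind{B}{G}{R}$: the filtration indexed by Bruhat cells $BwB$ produces composition factors $w^{-1}(\delta)$ for $w\in W$. Under the hypothesis $\mathrm{char}(R)\nmid\prod_{i=1}^{d}(q^{i}-1)$, these $|W|$ characters of $T$ are pairwise distinct, because an equality $w(\delta)=\delta$ translates, via the permutation action of $S_{d}$ on the diagonal exponents of $\delta_{B}^{-1/2}$, into relations of the form $q^{w^{-1}(j)-j}\equiv 1$ with exponents in $\{-(d-1),\dots,d-1\}$, all of which are excluded by the hypothesis. Distinctness makes $\mathrm{Ext}^{1}_{T}$ between the graded pieces vanish, so the filtration splits and $r_{B}(\ind{B}{G}{R})=\bigoplus_{w\in W}w^{-1}(\delta)$.

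Next, the same Mackey/geometric-lemma recipe applied to $i_{I}(R)=\ind{P_{I}}{G}{R}$ identifies $r_{B}(i_{I}(R))$ as a direct summand of $r_{B}(\ind{B}{G}{R})$, indexed via the bijection $P_{I}\backslash G/B\simeq W_{I}\backslash W$ with those Weyl elements whose chamber $w(X_{S})$ sits on the ``$i_{I}$-side'' of the simple-root hyperplanes indexed by $S\setminus I$, i.e.\ $w(X_{S})\subseteq\bigcup_{J\supseteq I}X_{J}$. Taking the quotient by the subrepresentations $i_{J}(R)$ for $J\supsetneq I$ and invoking exactness of $r_{B}$, the ``surviving'' characters in $r_{B}(v_{I}(R))$ are exactly those with $w(X_{S})\subseteq X_{I}$, which is (i)'s character formula. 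Irreducibility of $v_{I}(R)$ then follows from the multiplicity-freeness of $r_{B}(v_{I}(R))$: any nonzero subrepresentation $\pi$ would give a nonzero proper direct summand $r_{B}(\pi)$ of $r_{B}(v_{I}(R))$, but Frobenius reciprocity combined with the explicit description of the $r_{B}(i_{J}(R))$ for $J\supseteq I$ forces the missing characters of $r_{B}(v_{I}(R)/\pi)$ to already appear inside some $i_{J}(R)$ with $J\supsetneq I$, contradicting the definition of $v_{I}(R)$.

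Finally, (ii) follows by a dimension count on Iwahori-fixed vectors: each unramified character contributes a $1$-dimensional $T\cap I_{0}$-fixed subspace, so $\dim v_{I}(R)^{I_{0}}=\#\{w\in W:w(X_{S})\subseteq X_{I}\}$, and summing over $I\in\PC(S)$ exhausts the $|W|$ chambers of $X$, matching $\dim\ind{B}{G}{R}^{I_{0}}=|W|$ and forcing every $v_{I}(R)$ to appear exactly once in $\mathrm{JH}(\ind{B}{G}{R})$. The main obstacle in this plan is the combinatorial identification in the middle step — namely matching the Bruhat-coset parametrization of $r_{B}(i_{I}(R))$ with the chamber-sign condition on $w(X_{S})$. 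Everything else is formal once the character computation, and in particular the distinctness of the $w^{-1}(\delta)$, is in place.
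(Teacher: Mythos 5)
This Fact is \emph{cited} from \cite{dp} (Lemme 2.3.3); the present paper gives no proof of it, so there is no internal argument to compare yours against. Assessing your sketch on its own merits: the Jacquet-module computation is sound. The geometric lemma filtration of $r_{B}(\ind{B}{G}{R})$ has graded pieces $w^{-1}(\delta)$, $w\in W$, these unramified characters are pairwise distinct under the banal hypothesis (the relevant relations are indeed $q^{a}=1$ with $|a|\le d-1$), and for distinct unramified characters $\chi\ne\chi'$ of $T$ one has $\Ext^{*}_{T}(\chi,\chi')=0$ (Koszul complex for $T/T^{0}\cong\ZM^{d}$, with some $\chi_{i}(\chi'_{i})^{-1}-1$ a unit), so the filtration splits. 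You also rightly flag the Bruhat-coset-to-chamber matching as the remaining piece of bookkeeping.

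The gap is in the irreducibility step of (i). You write that for $0\ne\pi\subsetneq v_{I}(R)$, the characters of $r_{B}(v_{I}(R)/\pi)$ ``already appear inside some $i_{J}(R)$ with $J\supsetneq I$''. This is incompatible with the character formula you just established: every character in $r_{B}(v_{I}(R))$ is some $w^{-1}(\delta)$ with $w(X_{S})\subseteq X_{I}$, while $r_{B}(i_{J}(R))$ for $J\supsetneq I$ consists of the $w^{-1}(\delta)$ with $w(X_{S})\subseteq\bigcup_{K\supseteq J}X_{K}$, a union of components disjoint from $X_{I}$. So no character of $r_{B}(v_{I}(R))$ — hence of any of its subquotients — appears in any $r_{B}(i_{J}(R))$ with $J\supsetneq I$, and the claimed contradiction simply does not arise. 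Proving irreducibility in the banal case genuinely requires a different input, e.g.\ invertibility of the intertwining operators between the principal series attached to a $W$-regular unramified character (as in the paper's own argument in the proof of Proposition \ref{proppiI} ii), via \cite[2.5.4]{dp}), or the split semisimplicity of the Iwahori--Hecke algebra over $R$ together with Borel's equivalence. None of this is in your sketch. The gap propagates to (ii): the Iwahori-dimension count $\sum_{I}\dim v_{I}(R)^{I_{0}}=|W|=\dim(\ind{B}{G}{R})^{I_{0}}$ does pin down multiplicity one and exhaustion, but only \emph{after} the $v_{I}(R)$ are known to be irreducible and pairwise non-isomorphic; if some $v_{I}(R)$ were reducible its Jordan--H\"older factors could overlap with other $v_{J}(R)$'s and the count would no longer force the conclusion.
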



Let us label  $t_{0},t_{1},\cdots,t_{d-1}$ the diagonal entries of an
element $t\in T$ (starting from the upper left corner).
We get a labeling $S = \{\alpha_{1},\cdots,\alpha_{d-1}\}$, where
$\alpha_{i}(t):= t_{i-1}t_{i}^{-1}$, and we get an
identification of the Weyl group $W$ of $T$ with the symmetric group
$\SG_{d}$ of the set $\{0,\cdots, d-1\}$. Then we see that
the condition $ w(X_{S})\subseteq X_{I}$ appearing in the
summation of point i) above is equivalent to the condition
$$I= \{\alpha_{i}\in S, \,w(i-1)<w(i)\}.$$

\alin{Classification under the Coxeter congruence relation} Here the
coefficient field is $\Fl$ or $\oFl$ and we
assume that the multiplicative order of $q$ modulo $\ell$ is exactly
$d$. Denote by $\nu_{G}$ the unramified character $g\mapsto q^{-{\rm
    val}({\rm det})(g)}$. Observe that $\nu_{G}$ is trivial on the
center of $G$ and generates a cyclic subgroup $\langle\nu_{G}\rangle$ of order $d$ of the
group of $\oFl$-valued characters of $G$. 

We put $\wt{S}:=S\cup\{\alpha_{0}\}$ where $\alpha_{0}$ denotes the
opposite of the longest root of $T$ in $\hbox{Lie}(B)$. 
Thus, if the diagonal entries of $t\in T$ are $t_{0},t_{1},\cdots,
t_{d-1}$ as above, then
$\alpha_{0}(t)=t_{d-1}t_{0}^{-1}$.
Note that $\wt{S}$ is stable under the action of the Coxeter element
$c$ of $W=\SG_{d}$ which takes $i<d-1$ to $i+1$ and $d-1$ to $0$. 
In fact $\wt{S}$ is a principal homogeneous set under the cyclic subgroup
$\langle c\rangle$  of order $d$ generated by $c$. Therefore,
it is convenient to identify $\{0,\cdots, d-1\}$ with
$\ZM/d\ZM$ through the canonical bijection, so that we simply have
$$ c(\alpha_{i})=\alpha_{i+1},\,\, \forall i\in\ZM/d\ZM.$$

We denote by $\PC'(\wt{S})$ the set of \emph{strict} subsets of
$\wt{S}$. 
  For any $I\in \PC'(\wt{S})$ we can thus choose an $i\in
  \wt{S}\setminus I$. The 
  translated subset $c^{-i}(I)$ is then contained in $S$ so we can consider
  the representation $i_{c^{-i}I}(\oFl)\otimes \nu_{G}^{i}$.

\begin{lem} \label{lemiI}
Up to semisimplification, the representation
  $i_{c^{-i}I}(\oFl)\otimes \nu_{G}^{i}$  is independent of the choice of
  $i$ in $\wt{S}\setminus I$.
We denote by $[i_{I}]$ its class in the Grothendieck group $\RC(G,\oFl)$.
\end{lem}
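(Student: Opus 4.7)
The plan is to establish the claimed equality in $\RC(G,\oFl)$ by comparing Jacquet modules along $B$, with the Coxeter congruence $q^{d}\equiv 1\pmod\ell$ providing the decisive cancellation.

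First, it suffices to treat the adjacent case $j=i+1\pmod d$; iterating this around the cycle of $\wt{S}$ closes up because $\nu_{G}^{d}=1$, which is precisely the Coxeter congruence. After conjugating by $c^{-i}$, I may assume $i=0$ and $j=1$, and reduce to showing, for any $I\subset S$ with $\alpha_{0},\alpha_{1}\notin I$, the identity
$$[i_{I}(\oFl)]=[i_{c^{-1}I}(\oFl)\otimes\nu_{G}]\quad\text{in }\RC(G,\oFl).$$

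Second, I apply the exact functor $r_{B}$. Combining the Fact of \ref{reminder} over $\oQl$ with the expansion $[i_{J}(\oQl)]=\sum_{J'\supseteq J}[v_{J'}(\oQl)]$ and reduction modulo $\ell$, one obtains in $\RC(T,\oFl)$ the formula
$$[r_{B}(i_{J}(\oFl))]=\sum_{w\in W\,:\,w(X_{S})\subseteq X_{J'}\text{ for some }J'\supseteq J}[w^{-1}\delta],$$
where $\delta:=\delta_{B}^{-1/2}|_{T}$. Since $\nu_{T}:=\nu_{G}|_{T}$ is $W$-invariant, twisting by $\nu_{G}$ multiplies each summand by $\nu_{T}$. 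Reindexing the right-hand side of the target identity by $w\mapsto cw$ bijects the two index sets and turns the typical summand into $w^{-1}(c\delta)\cdot\nu_{T}$, so the whole identity reduces to the single character identity on the diagonal torus
$$c^{-1}\delta\;\equiv\;\delta\cdot\nu_{T}\pmod\ell.$$
This is a short explicit computation: one finds $(c^{-1}\delta/\delta)(t)=\nu_{T}(t)\cdot q^{d\,\val(t_{d-1})}$, and the extraneous factor $q^{d\,\val(t_{d-1})}$ is trivial modulo $\ell$ by the Coxeter congruence $q^{d}\equiv 1$.

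Finally, to lift this equality of Jacquet modules to an equality in $\RC(G,\oFl)$, I use that the admissible character of a parabolically induced representation is determined by its Jacquet module on the regular set of $T$ (via Weyl integration), together with Brauer--Nesbitt: equality of characters on the regular semisimple set forces equality of semisimplifications. The main obstacle will be the character identity $c^{-1}\delta\equiv\delta\cdot\nu_{T}\pmod\ell$: it is the arithmetic heart of the lemma, and explains why the Coxeter congruence is indispensable --- without it, the cyclic rotation on $\wt{S}$ would not lift to an identity in $\RC(G,\oFl)$.
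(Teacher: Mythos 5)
Your computational core --- the character identity $c^{\pm1}\delta\equiv\delta\,\nu_T^{\mp1}\pmod\ell$, which is exactly where the Coxeter congruence enters --- is correct and is the same arithmetic input as in the paper. However, the overall strategy differs from the paper and has two genuine gaps.

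The paper reduces immediately to a known structural fact: by \cite[Lemme 4.13]{finitude}, for associate parabolics one has $[i_{c^{-i}I}(1)]=[i_I(\gamma)]$ in $\RC(G,\oFl)$ with $\gamma$ an explicit modulus-type character, and the whole content of the lemma is then the computation $\gamma=\nu_G^{-i}|_{M_I}$ on $T$. You instead compute $r_B$ of both sides, match them up, and then try to deduce equality in $\RC(G,\oFl)$. That last step is the problem: $r_B$ is \emph{not} injective on $\RC(G,\oFl)$ (it kills cuspidal constituents, which do occur here, e.g.\ inside $i_\emptyset(\oFl)$ under the Coxeter congruence), and the Brauer character of $i_{P_I}(\oFl)$ is not supported only on split elements, so ``determined by its Jacquet module along $B$ plus Weyl integration'' is false as stated. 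To control the characters on the other tori meeting $M_I$ you would need precisely the van Dijk-type comparison for associate parabolics --- which is the content of the lemma the paper cites, so your route does not actually avoid it. In other words, you would have to re-prove \cite[Lemme 4.13]{finitude} to close the argument, at which point the $r_B$ detour is superfluous.

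The reduction to the adjacent case $j=i+1$ is also not sound as written. The adjacent step needs both $\alpha_i$ and $\alpha_{i+1}$ to lie in $\wt S\setminus I$, and $\wt S\setminus I$ need not be a cyclic interval; there may be no chain of adjacent elements joining two given $i,j\in\wt S\setminus I$. You can fix this by running the $r_B$ computation directly with $c^{j-i}$ in place of $c$ (the index-set bijection $u\mapsto u\,c^{j-i}$ and the identity $c^{k}\delta=\delta\,\nu_T^{-k}$ generalize), but ``iterating around the cycle'' is not available for general $I$.

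So: the key arithmetic identity is right, but the deduction from it is incomplete; the paper's appeal to the associate-parabolics lemma is doing real work that your write-up silently assumes.
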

\begin{proof}
Up to translation by a power of $c$, 
we may assume that $I\subseteq S$, so that we have to
compare $i_{I}(\oFl)$ with $i_{c^{-i}I}(\oFl)\otimes\nu_{G}^{i}$
(assuming that $i\notin I$).
Note that the parabolic subgroups $P_{I}$ and $P_{c^{-i}I}$ are
associated. More precisely, 
any element of the normalizer of $T$ in $G$ which projects to $c^{i}$
will conjugate
the Levi component
$M_{c^{-i}I}$ to $M_{I}$.
Therefore, 
\cite[Lemme 4.13]{finitude} shows
 that in the Grothendieck group $\RC(G,\oFl)$ we have the equality
$[i_{c^{-i}I}(1)]=[i_{I}(\gamma)]$ with
$\gamma=(\delta_{P_{I}}c^{i}(\delta_{P_{c^{-i}I}}^{-1}))^{\frac{1}{2}}$.  Thus
we have to prove that $\gamma={\nu_{G}^{-i}}_{|M_{I}}$. Since restriction
to $T$ is injective on characters of $M_{I}$, we may restrict both
sides to $T$.
Using that  ${\delta_{P_{I}}}_{|T}=\delta_{B}\delta_{B\cap
  M_{I}}^{-1}$,  we get that $\gamma_{|T}=
(\delta_{B}.c^{i}(\delta_{B}^{-1}))^{\frac{1}{2}}$. 

For $k=0,\cdots, d-1$, consider the smooth character of $T$ defined by
$\varepsilon_{k}(t)= q^{-{\rm val}(t_{k})}$ where $t_{0},\cdots,
t_{d-1}$ are the diagonal entries of $t\in T$. Then we have
$$ \gamma_{|T}= \left(\prod_{k<l} \varepsilon_{k}\varepsilon_{l}^{-1}
  \prod_{k<l}
  \varepsilon_{k+i}^{-1}\varepsilon_{l+i}\right)^{\frac{1}{2}} =
\prod_{k< i\leq l} \varepsilon_{k}\varepsilon_{l}^{-1} =
\prod_{k< i}\varepsilon_{k}^{d-i}\prod_{i\leq l}\varepsilon_{l}^{-i}.$$
Of course in the first equality, the indices $k+i$ and $l+i$ should
be read modulo $d$. To get the second equality, we observe that 
for $0\leq k< l<d$, we have $k+i>l+i (\hbox{mod } d) \Leftrightarrow
k<d-i\leq l \Leftrightarrow  l+i < i \leq k+i$.
Now using the fact that $\varepsilon_{k}^{d}=1$, we get
$\gamma_{|T}=\prod_{k}\varepsilon_{k}^{-i}= (\nu_{G}^{-i})_{|T}$ as desired.
\end{proof}

\begin{rak}
  The particular case $I=\emptyset$, $i=1$ of the above lemma tells us
  that $[\ind{B}{G}{\oFl}]=[\ind{B}{G}{\oFl}\otimes\nu_{G}]$. So the
  twisting action of the cyclic group $\langle\nu_{G}\rangle$ on the
  set of classes of irreducible representations preserves the multiset
  ${\rm JH}(\ind{B}{G}{\oFl})$.
\end{rak}

We now want to isolate a certain irreducible constituent of
$[i_{I}]$. 
We follow the Zelevinski approach via degenerate Whittaker models, 
as in Vign\'eras' work. 
First we associate a partition $\lambda_{I}$ of $d$ to $I$ in the following way.
As in the previous proof, $I$ determines a conjugacy class of Levi
subgroups, namely that of $M_{c^{-i}I}$ for any $i\in \wt{S}\setminus
\{i\}$. This conjugacy class corresponds to a partition $\mu_{I}$ of
$d$, and we let $\lambda_{I}$ be the transpose of $\mu_{I}$.
For example,  $\lambda_{\emptyset}=(d)$, and
$\lambda_{I}=(1,\cdots, 1)$ whenever $|I|=d-1$.

We refer to \cite[III.1]{Vig} for the basics on
 the theory of derivatives and to \cite[V.5]{Viginduced} for the
 notion of degenerate Whittaker models.

 \begin{fac} For $I\in\PC'(\wt{S})$,
   the representation $[i_{I}]$ has a unique irreducible constituent
   $\pi_{I}$ admitting a $\lambda_{I}$-degenerate Whittaker model. 
   Moreover any other irreducible constituent has $\lambda$-degenerate
   Whittaker models only for $\lambda< \lambda_{I}$.
 \end{fac}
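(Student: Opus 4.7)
The strategy is to reduce to Vign\'eras' modular Zelevinski theory developed in \cite[III.1]{Vig} and \cite[V.5]{Viginduced}, combined with an essentially combinatorial computation of derivatives. By Lemma \ref{lemiI}, after twisting by a suitable power of $\nu_G$ we may assume $I\subseteq S$: the unramified twist leaves all degenerate Whittaker partitions unchanged, since $\nu_G$ restricts trivially to any unipotent subgroup. Under this reduction $i_I(\oFl) = \Ind{P_I}{G} \mathbf{1}$ is induced from the trivial character of the standard Levi $M_I\simeq\prod_j \GL_{n_j}(K)$, where the composition $(n_j)_j$ defines the partition $\mu_I$.

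The next step is to compute, for each partition $\lambda$ of $d$, the dimension of the space of $\lambda$-degenerate Whittaker functionals on $i_I(\oFl)$. Using the geometric lemma in its modular form and the known derivatives of the trivial representation of each factor $\GL_{n_j}(K)$, I would identify $\lambda_I=\mu_I^t$ as the dominance-maximal partition for which this space is non-zero, and show that at $\lambda=\lambda_I$ the space is in fact one-dimensional. The combinatorial core is the classical duality between the Levi partition $\mu_I$ and the associated nilpotent orbit partition $\lambda_I=\mu_I^t$ inherent to the Bernstein-Zelevinski formalism.

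Granted this, one appeals to the exactness properties of degenerate Whittaker functionals on finite length representations (\cite[V.5]{Viginduced}) and to the Jordan-H\"older decomposition of $[i_I]$ to conclude that a unique irreducible constituent $\pi_I$ admits a non-zero $\lambda_I$-Whittaker functional, appearing with multiplicity one. The maximality of $\lambda_I$ as a Whittaker partition for $i_I(\oFl)$ then forces every other constituent to admit only $\lambda$-degenerate Whittaker models with $\lambda<\lambda_I$.

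The main technical obstacle is to ensure that the classical Zelevinski combinatorics remain valid in characteristic $\ell$ under the Coxeter congruence $\mathrm{ord}_\ell(q)=d$. The critical observation is that for $I\subsetneq S$ every block size satisfies $n_j<d$, so $\ell\nmid q^{n_j}-1$ and the $\ell$-modular derivatives of the trivial representation of each $\GL_{n_j}(K)$ agree with their characteristic zero counterparts, allowing the usual combinatorics to apply verbatim. The remaining case $I=S$ is immediate, since $i_S(\oFl)=\mathbf{1}$ is already irreducible.
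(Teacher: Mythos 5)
The paper does not actually prove this Fact: it simply cites Vign\'eras' theory of derivatives and degenerate Whittaker models (\cite[III.1]{Vig}, \cite[V.5]{Viginduced}) and records the statement as a consequence. Your proposal is an attempt to fill in the argument, and its reductions are sound. Twisting by a power of $\nu_G$ to reduce to $I\subseteq S$ is correct, and the observation that $\nu_G$ is trivial on every unipotent subgroup (so that degenerate Whittaker spaces are literally unchanged under the twist) is exactly right. The key technical observation---that for $I\subsetneq S$ every Levi block has size $n_j<d$, hence $\ell$ is banal for $M_I$ under the Coxeter congruence, so the derivative computation for $\mathbf{1}_{M_I}$ agrees with the characteristic-zero Zelevinski picture---is the essential reason the classical combinatorics survives, and it is the same principle the paper relies on implicitly throughout Section 2. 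Combining this with the exactness of twisted Jacquet functors (dimension additivity across a composition series) to extract a unique constituent with a $\lambda_I$-model, together with the strict dominance inequality for the remaining constituents, is the correct logic.

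Two caveats. First, the proposal is a proof plan rather than a proof: the central combinatorial computation, namely that the $\lambda$-degenerate Whittaker space of $i_I(\oFl)$ is $1$-dimensional for $\lambda=\lambda_I=\mu_I^t$ and vanishes for $\lambda\not\leq\lambda_I$, is announced but not carried out; this is precisely the content of Vign\'eras' results that the paper imports. Second, you should be explicit that the set of partitions realized by degenerate Whittaker models of $i_I(\oFl)$ is a \emph{lower} subset for dominance with $\lambda_I$ its unique \emph{maximum} (not merely a maximal element); this is what forces every other constituent to have Whittaker partitions strictly dominated by $\lambda_I$, and it again follows from the banal computation, but it deserves a sentence since dominance is not a total order. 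With these points acknowledged, your route is consistent with the argument the paper is gesturing at.
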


 \begin{rak}
   By the Lemma above, for any $I\in \PC'(\wt{S})$ we have $\pi_{c^{i}I}\simeq
   \pi_{I}\otimes\nu_{G}^{i}$. In particular for any $i\in\wt{S}$, we have 
$ \pi_{\wt{S}\setminus\{i\}}=\nu_{G}^{i}$. On the other hand
$\pi_{\emptyset}$ is the only generic constituent of
$\ind{B}{G}{\oFl}$. 
 \end{rak}

Before proceeding, we introduce some more notation. Similarly to the
banal case, we consider the complement in $X$ of the union of all hyperplanes
attached to the roots in $\wt{S}$. Its connected components are
labeled by \emph{proper} subsets of $\wt{S}$, and given by 
$$ \wt{X}_{I}:=\{x\in X,\,\forall\alpha\in \wt{S},
\varepsilon_{I}(\alpha)\langle x,\alpha\rangle>0\}$$ 
where $\varepsilon_{I}$ is the sign function attached to $I$ as
before. Note that $\wt{X}_{\wt{S}}=\wt{X}_{\emptyset}=\emptyset$ and that
$\wt{X}_{S}=X_{S}$ is again the opposite Weyl chamber to $B$. However
for $I$ a strict subset of $S$, we have $\wt{X}_{I}\neq X_{I}$.

\begin{pro}\label{proppiI}
  \begin{enumerate}\item The multiset
${\rm JH}(\ind{B}{G}{\oFl})$ is a set (multiplicity one).
\item 
    The map  $I\in \PC'(\wt{S})\mapsto \pi_{I}\in
    {\rm JH}(\ind{B}{G}{\oFl})$ is a bijection.
  \item 
For all $I\in \PC'(\wt{S})$
    the following equality holds in $\RC(G,\oFl)$ :
$$ [i_{I}]=\sum_{J\supseteq I} [\pi_{J}].$$
\item $\pi_{\emptyset}$ is a cuspidal representation, and if $I\neq
  \emptyset$ then 
$$ r_{B}(\pi_{I})= \bigoplus_{w(\wt{X}_{S})\subset \wt{X}_{I}} w^{-1}(\delta).$$

\end{enumerate}
\end{pro}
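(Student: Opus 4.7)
My strategy is to first establish (iii) as the master decomposition formula in the Grothendieck group $\RC(G,\oFl)$, deduce (i) and (ii) as formal consequences, and finally derive (iv) from the $\ell$-adic Jacquet module computation of \S\ref{reminder} combined with the Coxeter twist of Lemma~\ref{lemiI}. The heart of the argument is to bootstrap from the classical $\PC(S)$-indexed $\ell$-adic decomposition to the modular $\PC'(\wt{S})$-indexed one, systematically exploiting the $\langle c\rangle$-symmetry and the fact that $\nu_G$ has order exactly $d$ under the Coxeter congruence hypothesis.

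For (iii), I would proceed by descending induction on $|I|$, reducing to $I\subseteq S$ via Lemma~\ref{lemiI} at each step. The base case $|I|=d-1$ follows from the Remark: $[i_I]=[\pi_I]=[\nu_G^j]$ for any $j\in\wt{S}\setminus I$. For $I\subsetneq S$, the $\ell$-adic identity
$$ [i_I(\oQl)] = \sum_{J\in\PC(S),\,J\supseteq I} [v_J(\oQl)] $$
passes to $\oFl$ by exactness of mod-$\ell$ reduction on the Grothendieck group of finite-length representations. The task is then to identify $[v_J(\oFl)]$ for each $J\in\PC(S)$: a secondary descending induction on $|S\setminus J|$ shows $[v_J(\oFl)]=[\pi_J]+[\pi_{J\cup\{\alpha_0\}}]$ for $J\subsetneq S$ and $[v_S(\oFl)]=[\pi_S]$, using the defining relation $[v_J]=[i_J]-\sum_{J'\supsetneq J,\,J'\in\PC(S)}[v_{J'}]$ together with the Whittaker-model characterization of the Fact to pin down the two surviving constituents. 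Re-indexing by $K\in\PC'(\wt{S})$ with $K\supseteq I$, split according to whether $\alpha_0\in K$, yields (iii). Parts (i) and (ii) then drop out formally: specializing (iii) to $I=\emptyset$ gives $[\ind{B}{G}{\oFl}]=\sum_{J\in\PC'(\wt{S})}[\pi_J]$, and since the transition matrix $(1_{J\supseteq I})_{I,J\in\PC'(\wt{S})}$ is upper-unitriangular and hence invertible, the classes $[\pi_I]$ are linearly independent in $\RC(G,\oFl)$; the $\pi_I$ are thus pairwise non-isomorphic and appear each with multiplicity exactly one.

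For (iv) I would first treat $I\subseteq S$ with $I\neq\emptyset$: the $\ell$-adic formula $r_B(v_I(\oQl))=\bigoplus_{w(X_S)\subseteq X_I}w^{-1}(\delta)$ passes to $\oFl$ by exactness of $r_B$. Combined with $[v_I(\oFl)]=[\pi_I]+[\pi_{I\cup\{\alpha_0\}}]$ from the proof of (iii), and noting that $X_S=\wt{X}_S$ while $X_I=\wt{X}_I\sqcup H\sqcup\wt{X}_{I\cup\{\alpha_0\}}$ (with $H$ the intersection with the hyperplane $\alpha_0=0$), the $\ell$-adic sum splits into two halves indexed by which refined chamber $w(\wt{X}_S)$ lies in. Assigning the first half to $\pi_I$ and the second to $\pi_{I\cup\{\alpha_0\}}$ is the delicate point, which I would settle by a Whittaker-model computation: the $\lambda_I$-degenerate functional of $\pi_I$ produces a specific $T$-character of $r_B(\pi_I)$ lying in the cell $\wt{X}_I$. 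For general $I\in\PC'(\wt{S})$ the Coxeter twist $\pi_I=\pi_{c^{-i}I}\otimes\nu_G^i$ (with $i\in\wt{S}\setminus I$) reduces to the case $c^{-i}I\subseteq S$, chambers and characters translating compatibly. Cuspidality of $\pi_\emptyset$ follows by summing (iv) over $J\neq\emptyset$ and comparing with $r_B(\ind{B}{G}{\oFl})=\bigoplus_{w\in W}w^{-1}(\delta)$: the non-empty $\wt{X}_J$ tile the generic locus of $X$ (recall $\wt{X}_\emptyset=\emptyset$), so every Weyl chamber lies in a unique $\wt{X}_J$ with $J\neq\emptyset$, forcing $r_B(\pi_\emptyset)=0$.

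The main obstacle I expect is the Whittaker-matching step in (iv) — deciding which constituent of $v_I(\oFl)$ contributes which half of its Jacquet module — together with the analogous matching in the secondary induction of (iii); both require careful use of the Coxeter congruence hypothesis to rule out further collapses in the decomposition matrix and to ensure that $\nu_G$ has order exactly $d$.
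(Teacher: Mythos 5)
Your overall architecture—establish (iii) first as a master formula and derive (i), (ii) as formal consequences—runs into two circularities that the paper avoids by proving things in a different order.

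First, the claim that (i) and (ii) drop out from the upper-unitriangularity of the matrix $(1_{J\supseteq I})$ is not correct. Inverting that matrix only expresses each $[\pi_I]$ as a signed sum of the $[i_J]$'s; it does \emph{not} show the $[\pi_I]$ are linearly independent in $\RC(G,\oFl)$, because nothing a priori prevents two classes $[i_{J_1}],[i_{J_2}]$ (or two $[\pi_{J_1}],[\pi_{J_2}]$) from coinciding. Linear independence of the $[\pi_I]$ is \emph{equivalent} to their being pairwise non-isomorphic, i.e.\ to the injectivity in (ii), and that is genuine content. The paper proves injectivity by a separate derivative computation: from a generic constituent of $r_{\lambda_\pi}(\pi_I)$ one recovers the set $A_1$ and hence $I$ via $I=\{\alpha_i\in\wt{S},\,i\notin A_1\}$. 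That step (or something playing its role) is missing from your argument. Similarly, multiplicity one (i) is proved in the paper directly from the $W$-regularity of $\delta=\nu_G^{(1-d)/2}\prod\varepsilon_i^i$ under the Coxeter congruence; it does not follow from (iii).

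Second, the ``secondary descending induction'' you propose to identify $[v_J(\oFl)]=[\pi_J]+[\pi_{J\cup\{0\}}]$ is circular as stated. The relation $[v_J]=[i_J]-\sum_{J'\supsetneq J}[v_{J'}]$ replaces the problem of decomposing $[v_J]$ by the problem of decomposing $[i_J]$, which is exactly statement (iii) you are trying to prove. The Fact only singles out \emph{one} distinguished constituent $\pi_J$ of $i_J$ (the one of maximal Whittaker degeneracy) and says the others have smaller $\lambda$; it gives no control on \emph{how many} other constituents there are or what they are. To pin this down the paper uses a third ingredient that is entirely absent from your proposal: the intertwining-operator argument showing that adjacent Borels whose chambers lie in the same component $\wt{X}_J$ yield isomorphic inductions, which bounds the number of non-cuspidal constituents of $\ind{B}{G}{\oFl}$ by $2^d-2$ and gives the surjectivity in (ii). This bound is what forces $[v_J(\oFl)]$ to have exactly the two constituents you want, and the paper records that bound \emph{before} (iii), not after.

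For (iv), your Whittaker-matching strategy (deciding which of the two halves of $r_B(v_I(\oFl))$ belongs to $\pi_I$ versus $\pi_{I\cup\{0\}}$) is precisely the delicate point you flag, and you give no argument. The paper instead reuses the surjectivity argument (the intertwining operators show $r_B(\pi_I)$ is supported on a \emph{single} component $\wt{X}_J$) and then the injectivity computation ($I\subseteq J$, and $I\mapsto J$ being a bijection forces $I=J$). In short: your plan inverts the dependency order of the paper and, in doing so, discards the two genuinely load-bearing lemmas ($W$-regularity of $\delta$ and the intertwining-operator bound) without replacements.
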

\begin{proof}
  i) Suppose $\pi$ is a non-cuspidal irreducible subquotient of
  $\ind{B}{G}{\oFl}$. Let $P=M_{P}U_{P}$ be a parabolic subgroup such that
  $\pi_{U_{P}}\neq 0$. Since $\ell$ is banal for $M_{P}$, the Mackey formula
  (or ``geometric lemma'') shows that in fact $\pi_{U_{B}}\neq 0$. But
  the congruence relation and the Mackey formula imply that
  $\ind{B}{G}{\oFl}_{U_{B}}$ has the multiplicity one property, as a
  representation of $T$. More precisely, we have
  $r_{B}(\ind{B}{G}{\oFl})=\bigoplus_{w\in W} w(\delta)$, and
  $\delta=\nu_{G}^{\frac{1-d}{2}}\prod_{i=0}^{d-1}\varepsilon_{i}^{i}$ 
  is a $W$-regular character since $q$ has order $d$.
Hence $\pi$ occurs with multiplicity one in
  $\ind{B}{G}{\oFl}$. Now any cuspidal representation is generic, so
  there is at most one  cuspidal subquotient
  of $\ind{B}{G}{\oFl}$.

ii) 
This follows from the proof of \cite[Prop
  3.2.4]{jlmodl}. However the latter reference rests on
  Vign\'eras' classification \cite[V.12]{Viginduced}, so in particular on
a difficult result of  Ariki's on the classification of simple modules
of Hecke-Iwahori algebras at roots of unity.
In fact, in our context the latter can be avoided  and replaced
by the more elementary partial classification of \cite[2.17]{VigLuminy}.
 Nevertheless, for the convenience of the reader, we sketch a complete
and more direct proof.

Let us first show the injectivity of the map.
Let $\pi$ be some irreducible subquotient of 
$\ind{B}{G}{\oFl}$, and let $\lambda=\lambda_{\pi}$ be the partition
of $d$ obtained from $\pi$ by taking successive higher derivatives.
Hence $\lambda_{1}$ is the
order of the highest non-zero derivative of $\pi$, $\lambda_{2}$ is that of
the derivative $\pi^{(\lambda_{1})}$, etc. The partition $\lambda$ is
the greatest element in the set of all partitions $\lambda'$ such that
$r_{\lambda'}(\pi)$ admits a generic subquotient. 
Here $r_{\lambda'}$ denotes the normalized Jacquet functor associated
to the standard parabolic
subgroup $P_{\lambda'}=U_{\lambda'}M_{\lambda'}$ associated to $\lambda'$.
Let $\tau$ denote a generic subquotient of $r_{\lambda}(\pi)$. We can write
$\tau = \gamma(A_{1})\otimes\cdots \otimes \gamma(A_{|\lambda|})$
where $A_{1}\sqcup \cdots \sqcup A_{|\lambda|}= \{0,\cdots, d-1\}$ is a
set-theoretical partition with $|A_{i}|=\lambda_{i}$, and for any
subset $A\subset\{0,\cdots, d-1\}$, $\gamma(A)$ denotes the unique
generic subquotient of the normalized induction $\times_{a\in
  A}{\nu}^{a-\frac{d-1}{2}}$ (so this is a representation of $\GL_{|A|}(K)$).
If $\pi=\pi_{I}$ for some $I$, then $\lambda=\lambda_{I}$ and a computation shows that 
for $k=1,\cdots, |\lambda|$,
\begin{eqnarray*}
A_{k}  & = &\big\{a\in \{0,\cdots, d-1\}, \{\alpha_{a},c^{-1}\alpha_{a},\cdots,
c^{2-k}\alpha_{a}\}\subset I\big\} \\  &  = &
 \{a\in \ZM/d\ZM, \{\alpha_{a},\alpha_{a-1},\cdots,
\alpha_{a-k+2}\}\subset I\}.
\end{eqnarray*}
 In particular, the following holds :
\begin{enumerate}
\item[a)] for all $k=1,\cdots, |\lambda|-1$, we have $A_{k+1}\subset c(A_{k})=A_{k}+1$
\item[b)] $I=\{\alpha_{i}\in\wt{S},\, i \notin A_{1}\}$.
\end{enumerate}
Hence we see that 
$\pi_{I}$ determines $I$,
so that the map in point ii) is injective.

In order to prove the surjectivity, it is enough to prove that
$\ind{B}{G}{\oFl}$ has at most $2^{d}-2$ irreducible non cuspidal constituents.
If $\pi$ is such a constituent, there is a Borel subgroup $B'$ such
that $\pi$ is the \emph{unique} irreducible quotient of the
normalized induced representation $\ip{B'}{G}{\delta}$. However, the
same argument as in \cite[2.5.4]{dp} shows that if both the chambers $C(B')$ and
$C(B'')$ are contained in a component $\wt{X}_{I}$, then the canonical
interwining operator $\ip{B'}{G}{\delta}\To{}\ip{B''}{G}{\delta}$ is
an isomorphism. Indeed, we may assume as in loc.cit that $B'$ and
$B''$ are adjacent, with wall associate to some root $r$. Then the
representation theory for $\GL_{2}$ (note that $\ell$ is banal w.r.t
to $\GL_{2}(K)$) tells us that the canonical
intertwining operator is invertible unless $q^{l(r)}=q^{\pm 1}$ in
$\oFl$, where $l(r)$ is the height of the root. With our congruence
hypothesis and the general inequality $l(r)\leq n-1$, this implies
that $l(r)$ is either $1$, $-1$ or $n-1$, which is equivalent to $\pm
r\in\wt S$.
This gives the desired bound.

iii)
By ii), we only have to show that if $J$ is any other strict subset of
$\wt{S}$, then $\pi_{J}$ occurs in $[i_{I}]$ if and only if $J\supseteq I$.
Start with  $J\supseteq I$ and choose
$i\in \wt{S}\setminus J$. Then  we have
$i_{c^{-i}J}(\Fl)\otimes\nu_{G}^{i}\subset
i_{c^{-i}I}(\Fl)\otimes\nu_{G}^{i}$, so $\pi_{J}$ occurs in $[i_{I}]$.
Conversely, suppose that $\pi_{J}$
occurs in $[i_{I}]$. 
Assume first that  $J\cup I\neq \wt{S}$ and choose $i\in
\wt{S}\setminus (J\cup I)$.
Then we see that $[\pi_{J}\otimes \nu_{G}^{-i}]$ occurs in $i_{c^{-i}I}(\Fl)\cap i_{c^{-i}J}(\Fl)=i_{c^{-i}(I\cup
  J)}(\Fl)$. Hence $\lambda_{J}\leq\lambda_{I\cup J}$, so
$\lambda_{J}=\lambda_{I\cup J}$ and finally  $I\cup
J=J$, as desired. Assume now that $J\cup I= \wt{S}$, choose
 $j\in J\setminus I$ and set $J^{*}:=J\setminus\{j\}$.
We get that  
$[\pi_{J}\otimes\nu_{G}^{-j}]$ occurs in $i_{c^{-j}I}(\Fl)\cap
i_{c^{-j}J^{*}}(\Fl)=i_{c^{-j}(I\cup   J^{*})}(\Fl)= i_{\wt{S}\setminus
  \{0\}}=\pi_{\wt{S}\setminus \{0\}}$. Hence
$\pi_{J}=\pi_{\wt{S}\setminus \{j\}}$, so $J=\wt{S}\setminus\{j\}$
which is impossible by definition of $j$.

iv) We proved in point ii) that $\ind{B}{G}{\oFl}$ has exactly $2^{d}-2$
non-cuspidal irreducible subquotients. But we constructed $2^{d}-1$
constituents, so $\ind{B}{G}{\oFl}$ has exactly one cuspidal
subquotient. We know it is generic, so it is, by definition,  $\pi_{\emptyset}$.
Now, fix some \emph{proper} subset $I$ of $\wt{S}$. Again by the proof
of the surjectivity in point ii), there is a unique proper subset
$J$ such that 
$$ r_{B}(\pi_{I})=\bigoplus_{w(\wt{X}_{S})\subset \wt{X}_{J}}
w^{-1}(\delta).$$
We still have to prove that $I=J$. Note that the condition
$w(\wt{X}_{S})\subset \wt{X}_{J}$ is equivalent to 
$$ J=\{\alpha_{j}\in \wt{S},\, wc^{-1}(j)<w(j)\}.$$
Now, items a) and b) in the proof of the injectivity in ii) show that
$I\subseteq J$. Since the map  $I\mapsto J$ is a bijection, it has to
be the identity.

\end{proof}

\alin{The decomposition matrix for elliptic representations}
Recall that an admissible smooth $\oQl$-representation $\pi$ of $G$ is called $\ell$-integral if
it contains a $G$-stable $\oZl$-lattice. Then it is known that the
reduction to $\oFl$ of
such a lattice only depends  on $\pi$ up to semisimplification, see
\cite[II.5.1.b]{Vig}. We
denote by $r_{\ell}(\pi)$ the semisimple $\oFl$-representation thus obtained.

\begin{pro}\label{decomp}
  Let $I\subseteq S$. Then we have 
$$r_{\ell}(v_{I}(\oQl))= \cas{[v_{I}(\oFl)]}{[\pi_{I}]+[\pi_{I\cup\{0\}}]}{I\neq S}{{[\pi_{S}]}}{I=S}$$
\end{pro}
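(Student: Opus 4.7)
My plan is to compute the class $r_\ell([v_I(\oQl)])$ in $\RC(G,\oFl)$ via Möbius inversion in characteristic zero combined with the composition-series formula of Proposition \ref{proppiI}(iii), and then identify it with $[v_I(\oFl)]$ by a length count.

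Since $\oQl$ is a banal coefficient field, Fact \ref{reminder} together with multiplicity one for $\ind{B}{G}{\oQl}$ yields the identity $[i_J(\oQl)] = \sum_{K \,:\, J \subseteq K \subseteq S}[v_K(\oQl)]$ in $\RC(G,\oQl)$. Möbius inversion on the Boolean interval $\{J : I \subseteq J \subseteq S\}$ then gives
$$ [v_I(\oQl)] = \sum_{J \,:\, I \subseteq J \subseteq S} (-1)^{|J|-|I|}\,[i_J(\oQl)]. $$
Since the lattice $\ind{P_J}{G}{\oZl}$ is $G$-stable with reduction $i_J(\oFl)$, applying $r_\ell$ and substituting $[i_J(\oFl)] = \sum_{K \,:\, J \subseteq K \subsetneq \wt S}[\pi_K]$ from Proposition \ref{proppiI}(iii), after swapping sums, produces
$$ r_\ell([v_I(\oQl)]) = \sum_{K \,:\, I \subseteq K \subsetneq \wt S}[\pi_K]\,\sum_{J \,:\, I \subseteq J \subseteq K \cap S}(-1)^{|J|-|I|}. $$
The inner alternating sum over a Boolean interval vanishes unless that interval is trivial, i.e., $K \cap S = I$, in which case it equals $1$. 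Under $K \cap S = I$ with $I \subseteq K \subsetneq \wt S$, the only possibilities are $K \in \{I,\, I \cup \{\alpha_0\}\}$; both lie in $\PC'(\wt S)$ when $I \subsetneq S$, whereas $I = S$ excludes $K = \wt S$ and leaves only $K = S$. This produces the right-hand side of the proposition.

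For the identification $r_\ell(v_I(\oQl)) = [v_I(\oFl)]$: the image of $\ind{P_I}{G}{\oZl}$ under the surjection $i_I(\oQl) \twoheadrightarrow v_I(\oQl)$ is a $G$-stable $\oZl$-lattice $V$, and $V/\ell V$ surjects onto $v_I(\oFl) = i_I(\oFl)/\sum_{J \supsetneq I}i_J(\oFl)$. The length count from the preceding computation (namely $2$ when $I \subsetneq S$ and $1$ when $I = S$) matches the length of $v_I(\oFl)$ forced by multiplicity one in $\ind{B}{G}{\oFl}$ (Proposition \ref{proppiI}(i)), so the surjection becomes an isomorphism on semisimplifications. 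I expect the main obstacle to be the combinatorial bookkeeping—in particular separating the cases $I \subsetneq S$ and $I = S$—together with controlling the subrepresentation $\sum_{J \supsetneq I, J \subseteq S}i_J(\oFl) \subseteq i_I(\oFl)$ tightly enough to confirm that $v_I(\oFl)$ really has the expected length.
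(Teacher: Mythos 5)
Your first half is sound: the Möbius inversion $[v_I(\oQl)] = \sum_{I \subseteq J \subseteq S}(-1)^{|J|-|I|}[i_J(\oQl)]$ over the banal coefficient field, followed by reduction and substitution of $[i_J(\oFl)] = \sum_{K \supseteq J,\, K \subsetneq \wt S}[\pi_K]$ and the collapse of the inner alternating sum, is correct and produces exactly the right-hand side of the proposition. This is essentially the same combinatorics the paper runs, except the paper works directly with the Euler characteristic of the Schneider--Stuhler complex over $\ZM$ rather than first over $\oQl$.

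The gap is in the second half, the identification $r_\ell(v_I(\oQl)) = [v_I(\oFl)]$. Your surjection runs the wrong way. Write $L = \ind{P_I}{G}{\oZl}$ and $L_J = \ind{P_J}{G}{\oZl}$; then $N := \ker(L \to V) = L \cap \sum_{J\supsetneq I} i_J(\oQl)$ is the $\ell$-saturation of $\sum_J L_J$ inside $L$, so $\sum_J L_J \subseteq N$ and the natural map is $v_I(\oFl) = L/(\sum_J L_J + \ell L) \twoheadrightarrow L/(N+\ell L) = V/\ell V$, not the other way around. This only yields $[v_I(\oFl)] \geq r_\ell(v_I(\oQl))$, which is the useless inequality; to finish you would need an \emph{upper} bound on the length of $v_I(\oFl)$, and multiplicity one in $\ind{B}{G}{\oFl}$ does not give that (it controls multiplicities in $\ind{B}{G}{\oFl}$, not which constituents of $i_I(\oFl)$ die in the quotient $v_I(\oFl)$). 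Making your argument close up is equivalent to showing $N = \sum_J L_J$, i.e., that $L/\sum_J L_J$ is $\ell$-torsion-free — and that is precisely Schneider--Stuhler's theorem that $v_I(\ZM)$ is $\ZM$-free, which is what the paper invokes. With that freeness, $v_I(\oZl) = v_I(\ZM)\otimes\oZl$ is simultaneously a $G$-stable lattice in $v_I(\oQl)$ and has reduction $v_I(\oFl)$, giving the identification at once; the same freeness keeps the Schneider--Stuhler resolution exact after $\otimes\,\oFl$, which is what justifies the alternating-sum identity over $\oFl$ that you obtained by reducing the $\oQl$-identity term by term.
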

\begin{proof}
Since parabolic induction commutes with inductive limits, we have
$i_{I}(R)\simeq i_{I}(\ZM)\otimes R$ for any ring $R$.
By its definition as a quotient  $v_{I}(R)=i_{I}(R)/\sum_{J\supset
    I}i_{J}(R)$, we also have $v_{I}(R)=v_{I}(\ZM)\otimes R$. 
Now, by \cite[Coro 4. 5]{SS1}, we know that
  $v_{I}(\ZM)$ is free over $\ZM$. The first equality follows.

If $I=S$, we have $v_{S}(\oFl)=\pi_{S}=\oFl$ (trivial representation),
so the second equality is clear in this case. Assume $I\neq S$. By
\cite[Prop 6.13]{SS1}, the following simplical complex is exact :
\begin{equation}
 0\To{} i_{S}(\ZM)\To{} \cdots \To{} \bigoplus_{J\supset I, |J|=|I|+1} 
i_{J}(\ZM)\To{} i_{I}(\ZM)\To{} v_{I}(\ZM)\To{} 0.\label{complexe}
\end{equation}
Since it consists of free $\ZM$-modules, it remains exact after base
change to $\oFl$.
Thus we get the equality
$$ [v_{I}(\oFl)]= \sum_{S \supseteq J\supseteq I} (-1)^{|J\setminus I|}
[i_{J}(\oFl)] $$
in $\RC(G,\oFl)$.
On the other hand, Proposition \ref{proppiI} iii) provides us with the
equality
\begin{equation}
 [\pi_{I}] = \sum_{\wt{S}\supset J \supseteq I} (-1)^{|J\setminus I|}
[i_{J}].\label{pii}
\end{equation}
Thus we get 
$$ [\pi_{I}]-[v_{I}(\oFl)]=\sum_{\wt{S}\supset J \supseteq I\cup\{0\}} (-1)^{|J\setminus I|}
[i_{J}]= -[\pi_{I\cup\{0\}}].$$

Alternatively, one could have used point iv) in
Proposition~\ref{proppiI}, and the easy fact that for any $I\subset
S$, we have $X_{I}=\wt{X}_{I}\cup\wt{X}_{I\cup\{0\}}$.

\end{proof}

\subsection{Corresponding representations}

\alin{Langlands-Jacquet transfer} We refer to \cite{jlmodl} for the
definition of the Langlands-Jacquet transfer map
$\LJ_{\oFl}:\,\RC(G,\oFl)\To{}\RC(D^{\times},\oFl)$, which is induced
by carrying  Brauer characters through the usual bijection between
regular elliptic conjugacy classes of $G$ and $D^{\times}$. We will
need the $\oFl$-valued unramified character $\nu_{D}:d\mapsto
q^{-{\rm val}\circ{\rm Nrd}(d)}$ of $D^{\times}$.

\begin{pro}\label{LJexplicite}
  For any strict subset $I\subset \wt{S}$ we have 
$$\LJ_{\oFl}(\pi_{I})=(-1)^{|I|} \sum_{j\in \wt{S}\setminus I}[\nu_{D}^{j}].$$
\end{pro}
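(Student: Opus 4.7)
The plan is to reduce to the special case where $[i_J]$ becomes an unramified character, using the Möbius-type inversion already established in the proof of Proposition \ref{decomp}.

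First I would recall equation \eqref{pii} from that proof, namely
$$[\pi_I] = \sum_{\wt{S} \supset J \supseteq I} (-1)^{|J \setminus I|} [i_J],$$
and apply $\LJ_{\oFl}$ term by term, using that $\LJ_{\oFl}$ is a well defined map on Grothendieck groups.

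Next I would compute $\LJ_{\oFl}([i_J])$ using Lemma \ref{lemiI}: for any $k \in \wt{S} \setminus J$ we have $[i_J] = [i_{c^{-k}J}(\oFl) \otimes \nu_G^k]$, and since $\alpha_0 \notin c^{-k}J$ (identifying $\wt{S}$ with $\ZM/d\ZM$), the translate $c^{-k}J$ is a subset of $S$, so $i_{c^{-k}J}(\oFl)$ is genuinely parabolically induced from $P_{c^{-k}J}$. Now the Langlands-Jacquet transfer $\LJ_{\oFl}$ of \cite{jlmodl} vanishes on proper parabolic inductions, because it is induced from Brauer characters, which vanish on elliptic regular semisimple elements for properly induced characters (no regular elliptic element of $G$ lies in a proper Levi). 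Thus $\LJ_{\oFl}([i_J]) = 0$ unless $c^{-k}J = S$ for some valid choice of $k$, i.e. unless $|J| = d-1$. In the latter case $J = \wt{S} \setminus \{\alpha_j\}$ for a unique $j \in \wt{S}$, with $k = j$, and $i_{c^{-j}J}(\oFl) = \oFl$, giving $[i_J] = [\nu_G^j]$. Since $\nu_G^j$ is one-dimensional with Brauer character $g \mapsto q^{-j\,\val\det g}$, the Brauer-character identification yields $\LJ_{\oFl}(\nu_G^j) = (-1)^{d-1}[\nu_D^j]$ (the sign being the usual Jacquet-Langlands sign already appearing in \cite{jlmodl}).

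Putting this together, only the top-dimensional $J$'s contribute, and $J = \wt{S} \setminus \{\alpha_j\} \supseteq I$ exactly when $j \in \wt{S} \setminus I$, with $|J \setminus I| = d-1-|I|$. Hence
$$\LJ_{\oFl}(\pi_I) = \sum_{j \in \wt{S} \setminus I} (-1)^{d-1-|I|} \cdot (-1)^{d-1} [\nu_D^j] = (-1)^{|I|} \sum_{j \in \wt{S} \setminus I} [\nu_D^j],$$
as claimed.

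The main obstacle, such as it is, is bookkeeping for the two signs: the $(-1)^{d-1}$ coming from the Jacquet-Langlands normalization on the one-dimensional constituents $\nu_G^j$, and the $(-1)^{d-1-|I|}$ coming from the inclusion-exclusion. All the real content lies in the vanishing statement $\LJ_{\oFl}([i_J]) = 0$ for $|J| < d-1$, which one must verify carefully for the mod $\ell$ Langlands-Jacquet transfer — but since it is defined by Brauer-character transfer on regular elliptic classes, and since no such class meets a proper Levi, this follows exactly as in the $\oQl$ case.
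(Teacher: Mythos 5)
Your proof is correct and follows essentially the same route as the paper: apply the inclusion-exclusion identity \eqref{pii}, use that $\LJ_{\oFl}$ kills proper parabolic inductions (so only $J=\wt{S}\setminus\{j\}$ survives, with $[i_J]=[\nu_G^j]$), and then track the two signs. The only difference is how the identity $\LJ_{\oFl}(\nu_G^j)=(-1)^{d-1}[\nu_D^j]$ is obtained: you read it off directly from the Brauer-character definition of $\LJ_{\oFl}$ (noting the character of $\nu_G^j$ on a regular elliptic $g$ equals that of $\nu_D^j$ on the matched $d\in D^\times$), whereas the paper derives it from compatibility of $\LJ$ with reduction modulo $\ell$ applied to $v_S(\oQl)=\oQl$, where the $\ell$-adic sign $(-1)^{|S|}$ is the classical Jacquet-Langlands normalization.
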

\begin{proof}
  Since the map $\LJ_{\oFl}$ kills all parabolically induced
  representations \cite[Thm 3.1.4]{jlmodl}, equality (\ref{pii}) shows that
$$\LJ_{\oFl}(\pi_{I})=(-1)^{|\wt{S}\setminus I|+1} \sum_{j\in
  \wt{S}\setminus I} \LJ_{\oFl}(\pi_{\wt{S}\setminus\{j\}}).$$
On the other hand
$\pi_{\wt{S}\setminus\{j\}}=\nu_{G}^{j}=r_{\ell}(v_{S}(\oQl))\otimes
\nu_{G}^{j}$. By compatibility of the $\LJ$ maps with 
 reduction modulo $\ell$ \cite[Thm 1.2.3]{jlmodl} and with torsion by characters, we get that 
$\LJ_{\oFl}(\pi_{\wt{S}\setminus\{j\}})=(-1)^{|S|} [\nu_{D}^{j}]$.

\end{proof}

\alin{Different operations on Weil-Deligne representations} Before we
proceed to a description of the Galois-type representations attached
to the $\pi_{I}$'s, we need to make precise some formal properties of
Weil-Deligne representations. 

It is convenient to work in a fairly general setting. So let $\CC$ be
an essentially small, artinian, noetherian, abelian category and  let
$\CC^{\rm ss}$ be the full subcategory of semisimple
objects. The Jordan-H\"older theorem yields a map
$$ {\rm Ob}(\CC)_{/\sim} \To{} K_{+}(\CC),\,\, V\mapsto [V]$$
from the set of isomorphism classes of objects to the free monoid on
simple objects. This map induces a bijection ${\rm Ob}(\CC^{\rm
  ss})_{/\sim} \simto K_{+}(\CC)$.

Assume further that $\CC$ is endowed with an automorphism $V\mapsto
V(1)$ and denote by $V\mapsto V(n)$ its $n^{\rm th}$ iteration. Consider the
category $\NC(\CC)$ with objects all pairs $(V,N)$ with $V\in{\rm
  Ob}(\CC)$ and $N:V\To{} V(-1)$ a nilpotent morphism. With the obvious
notion of morphisms, $\NC(\CC)$ is an artinian, noetherian, abelian
category. The formalism of Deligne's filtration \cite[(1.6)]{DelWeil2} yields a map
$$ {\rm Ob}(\NC(\CC))_{/\sim} \To{} K_{+}(\CC)^{(\NM)},\,\, (V,N)\mapsto
[V,N]$$
where the RHS is the set of almost zero sequences of elements in
$K_{+}(\CC)$. Namely, we put $[V,N]:=([P_{-n}^{N}(V)])_{n\in\NM}$
where $P^{N}_{i}$ is the primitive part of the $i$-graduate of
Deligne's filtration attached to $N$.
We leave the reader check the following fact.
\begin{lem}
  The map $(V,N)\mapsto [V,N]$ induces a bijection ${\rm
    Ob}(\NC(\CC^{\rm ss}))_{/\sim} \simto K_{+}(\CC)^{(\NM)}$.
\end{lem}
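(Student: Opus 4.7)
The plan is to write down an explicit quasi-inverse via ``Jordan blocks'' and verify that every $(V,N)\in\NC(\CC^{\rm ss})$ decomposes as a direct sum of such blocks, which yields both well-definedness and bijectivity at once.

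First I would introduce the Jordan blocks. For each $X\in\CC^{\rm ss}$ and each integer $n\geq 0$, define
\[
J(X,n):=\Bigl(\bigoplus_{k=0}^{n}X(k),\;N_{X,n}\Bigr),
\]
where $N_{X,n}$ identifies the summand $X(k)$ with $X(k-1)=X(k)(-1)\subset V(-1)$ for $k\geq 1$ and kills $X(0)$. A direct verification shows that Deligne's monodromy filtration on $J(X,n)$ has exactly $n+1$ nonzero graded pieces, one per summand, and that the only nonzero primitive part sits in degree $-n$ and equals $X=X(0)$; hence $[J(X,n)]$ is the finitely supported sequence with value $[X]$ at index $n$ and $0$ elsewhere. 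For an arbitrary $(x_n)_{n\in\NM}\in K_{+}(\CC)^{(\NM)}$, I choose semisimple representatives $X_n\in\CC^{\rm ss}$ with $[X_n]=x_n$ and form $\bigoplus_n J(X_n,n)\in\NC(\CC^{\rm ss})$; by construction this has the prescribed primitive data, which gives surjectivity.

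For injectivity, the key claim is that every $(V,N)\in\NC(\CC^{\rm ss})$ is isomorphic to $\bigoplus_{n\geq 0} J(P^N_{-n}(V),n)$. Since $V$ is semisimple in $\CC$, every subobject of $V$ is a direct summand, so Deligne's filtration $M_\bullet V$ admits a splitting as an object of $\CC$. I then upgrade this to an $N$-compatible grading by descending induction on the weight, using the structural isomorphisms $N^i:\mathrm{gr}^M_iV\simto\mathrm{gr}^M_{-i}V(-i)$ for $i\geq 0$ together with the primitive decomposition $\mathrm{gr}^M_iV\simeq P^N_i\oplus N(\mathrm{gr}^M_{i+2}V)(1)$ for $i\geq 0$, the relevant splittings being guaranteed by semisimplicity. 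Reading off the resulting grading identifies $V$ with $\bigoplus_{n\geq 0} J(P^N_{-n}(V),n)$ as an object of $\NC(\CC^{\rm ss})$, proving injectivity.

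Well-definedness of $(V,N)\mapsto [V,N]$ on isomorphism classes is automatic from the canonicity of Deligne's filtration, so the two arguments combine to give the bijection. The only real obstacle is the Jordan decomposition in the preceding paragraph: although $V$ is semisimple as an object of $\CC$, the pair $(V,N)$ is typically not a direct sum of irreducibles in $\NC(\CC)$, so one genuinely has to carry out the $\mathrm{sl}_2$-type primitive decomposition. All the required splittings follow formally from the fact that every short exact sequence in $\CC^{\rm ss}$ splits, however, so the classical Jordan-form argument for nilpotent endomorphisms of a semisimple module transcribes to the present abelian-categorical setting without any surprise.
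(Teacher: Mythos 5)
The paper itself gives no proof of this lemma (``we leave the reader check\ldots''), and the route you take---explicit Jordan blocks for surjectivity, a Jordan decomposition for injectivity---is the right one. Two points need repair, though.

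First, the Jordan block as you have written it is not a morphism in $\CC$. With $V=\bigoplus_{k=0}^n X(k)$ there is no canonical map $X(k)\to X(k-1)=X(k)(-1)$: the autoequivalence $(-1)$ carries no natural transformation from the identity, so ``$N_{X,n}$ identifies $X(k)$ with $X(k)(-1)$'' has no meaning. The fix is to reverse the twists: set $J(X,n):=\bigl(\bigoplus_{k=0}^{n}X(-k),\,N\bigr)$, where for $k\geq 1$ the restriction of $N$ to the summand $X(-k)$ of $V$ is the \emph{identity} map onto the summand $X(-k)=X(-k+1)(-1)$ of $V(-1)$, and $N$ kills $X(0)=X$. (Equivalently, keep $\bigoplus X(k)$ but let $N$ kill the top summand $X(n)$, and replace $X$ by $X(-n)$ to hit index $n$.) With this correction your computation of the monodromy filtration of $J(X,n)$ and of its primitive data is right, and the surjectivity argument stands.

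Second, the injectivity step is more delicate than ``every short exact sequence in $\CC^{\rm ss}$ splits''. The non-trivial point is not splitting the filtration $M_\bullet V$ in $\CC$ (that is indeed formal) but arranging an $N$-compatible splitting: one must lift each primitive $P^N_n\subset \gr^M_n V$ to a subobject $\wt{P}_n\subset M_n V$ satisfying $N^{n+1}\wt{P}_n=0$. An arbitrary lift only gives $N^{n+1}\wt{P}_n\subset M_{-n-3}V(-n-1)$, and to kill this term you need the surjectivity of $N^{n+1}\colon M_{n-1}V\to M_{-n-3}V(-n-1)$, itself a consequence of the hard Lefschetz isomorphisms on the $\gr^M_i$. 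Once you have the corrected lifts $\wt{P}_n$, the Lefschetz decomposition of the $\gr^M_i$ lifts to $V=\bigoplus_{n\geq 0}\bigoplus_{j=0}^{n}N^{j}\wt{P}_n(j)$, which is exactly $\bigoplus_n J(\wt{P}_n,n)$, and injectivity follows. So the skeleton of your argument is correct, but you should spell out why the lift landing in $\ker N^{n+1}$ exists rather than appealing to semisimplicity alone.
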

As a consequence, one gets :
\begin{itemize}
\item[--] a ``semi-simplification'' process ${\rm Ob}(\NC(\CC))_{/\sim}\To{}
  {\rm Ob}(\NC(\CC^{\rm ss}))/\sim$.
\item[--] a ``transposition'' process ${\rm Ob}(\NC(\CC^{\rm ss}))_{/\sim} \simto
{\rm Ob}(\LC(\CC^{\rm ss}))_{/\sim}$, where $\LC(\CC)$ denotes the category of
pairs $(V,L)$ with $L: V\To{} V(1)$ nilpotent.
\item[--] a map ${\rm Ob}(\NC({\CC'}^{\rm ss}))_{/\sim}\To{}{\rm
    Ob}(\NC({\CC}^{\rm ss}))_{/\sim}$ for any map $K_{+}(\CC')\To{} K_{+}(\CC)$.
\end{itemize}
As an example of application, let $\CC=\Rep_{\oFl}(W_{K})$,
resp. $\CC'=\Rep_{\oQl}(W_{K})$, be the category 
 of finite dimensional representations of $W_{K}$ with $\oFl$,
 resp. $\oQl$ coefficients.
In this paper, a \emph{Weil-Deligne $\oFl$-representation} is an object of
$\NC(\CC^{\rm ss})$ (so our convention is that the Weil part of a WD
representation is semisimple).
Applying the last item to the decomposition map $r_{\ell}:\,
K_{+}(\Rep_{\oQl}(W_{K})) \To{} K_{+}(\Rep_{\oFl}(W_{K}))$ 
we get a reduction process
$$   (\sigma^{\rm ss}, N) \mapsto r_{\ell}(\sigma^{\rm
  ss}, N)= (r_{\ell}\sigma^{\rm ss}, N) $$
for Weil-Deligne representations.

\alin{The Zelevinski-Vign\'eras correspondence} According to
\cite[Thm 1.6]{VigLanglands}, there is a unique map 
$$\pi\mapsto \sigma^{\rm
  ss}(\pi),\,\,\;\Irr{\oFl}{G}\To{}\left\{\hbox{$d$-dimensional semi-simple
  $\oFl$-reps of }W_{K}\right\}_{/\sim} $$
which is compatible with the $\ell$-adic semi-simple Langlands correspondence via reduction modulo $\ell$ in the following
sense : \emph{if $\pi$ is a constituent of $r_{\ell}(\wt\pi)$ for $\wt\pi\in
\Irr{\oQl}{G}$, then $\sigma^{\rm ss}(\pi)=r_{\ell}(\sigma^{\rm
  ss}(\wt\pi))$.} In \cite{ltmodl}, we gave a geometric realization of
this map, as well as another proof of its existence.

Using her classification \`a la Zelevinski, Vign\'eras explained in 
\cite[1.8]{VigLanglands} that the above
semi-simple Langlands correspondence extends uniquely  to a \emph{bijection} :
$$\application{}{\Irr{\oFl}{G}}{\left\{\hbox{$d$-dimensional 
    Weil-Deligne $\oFl$-reps of }W_{K}\right\}_{/\sim}}{\pi}{\sigma^{Z}(\pi)=(\sigma^{\rm ss}(\pi),
N^{Z}(\pi))}
$$
such that the following compatibility with the $\ell$-adic Langlands
correspondence via reduction modulo $\ell$
holds : \emph{if $\pi$ is a constituent of $r_{\ell}(\wt\pi)$ for $\wt\pi\in
\Irr{\oQl}{G}$, and \emph{if $\lambda_{\pi}=\lambda_{\wt\pi}$}, then 
$\sigma^{Z}(\pi)=r_{\ell}(\sigma(Z(\wt\pi)))$.}

Here, $Z$ denotes the Zelevinski involution for
$\oQl$-representations, and  the precise meaning of $r_{\ell}$ in the context of WD
representations was explained in the last paragraph. Further,
$\lambda_{\pi}$ is the partition of $d$ attached to
 $\pi$ by taking successive higher non-zero  derivatives, as in the proof
 of point ii) of Proposition \ref{proppiI}. Note that the mere
 existence of a $\wt\pi$ fulfilling the conditions above is highly non
 trivial in general, and rests on Ariki's work on cyclotomic Hecke algebras.

Our aim in this paper is to provide a (partial) geometric
interpretation of this enhanced correspondence, by means of a Lefschetz
operator. Therefore we will focus on the ``transposed'' WD
representation, as defined in the previous paragraph :
$$ (\sigma^{\rm ss}(\pi),L(\pi)):= {^{t}(\sigma^{\rm ss}(\pi),
{N^{Z}(\pi)})}.$$

We now want to compute explicitly these transposed WD representations
for the elliptic principal series. 
This will involve the $\oFl$-character $\nu_{W}:\,w\mapsto q^{-{\rm val}({\rm
    Art}_{K}^{-1}(w))}$ where ${\rm Art}_{K}$ is the local class field
homomorphism with takes a uniformizer to a geometric Frobenius.
For simplicity, we will use the
so-called ``Hecke normalization'' of
Langland's correspondence. 

\begin{pro} \label{WDexplicit}
  For any strict subset $I\subset \wt{S}$, we have 
$\sigma^{\rm ss}(\pi_{I})\simeq \bigoplus_{i=0}^{d-1} \nu^{i}_{W}$ and
in a good eigenbasis, $L(\pi_{I})$ is given by the matrix
$\sum_{\alpha_{i}\in I}E_{i-1,i}$
\end{pro}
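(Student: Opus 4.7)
\medskip
\noindent\textbf{Proof plan.}
My plan is to reduce to the case $I\subseteq S$ by twisting, then lift to the $\ell$-adic setting and exploit the compatibility of the Vign\'eras correspondence with reduction mod $\ell$ at the level of Zelevinski parameters.

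\emph{Reduction to $I\subseteq S$.} For any $i\in\wt{S}\setminus I$ we have $c^{-i}I\subseteq S$, and by the remark after Proposition~\ref{proppiI}, $\pi_{I}=\pi_{c^{-i}I}\otimes\nu_{G}^{i}$. Under unramified twist the semisimple parameter twists by $\nu_{W}^{i}$, which cyclically permutes the multiset $\{\nu_{W}^{j}\}_{j}$; a corresponding relabelling of the eigenbasis transforms the matrix $\sum_{\alpha_{j}\in c^{-i}I}E_{j-1,j}$ into $\sum_{\alpha_{j}\in I}E_{j-1,j}$, with indices read in $\ZM/d\ZM$. Hence it suffices to treat $I\subseteq S$.

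\emph{Semisimple part.} Proposition~\ref{decomp} shows that $\pi_{I}$ occurs in $r_{\ell}(v_{I}(\oQl))$. All constituents of the $\ell$-adic induced representation $\ind{B}{G}{\oQl}$ share the semisimple Langlands parameter $\bigoplus_{i=0}^{d-1}\nu_{W}^{i}$ (the standard unramified principal series parameter, in the Hecke normalization). Compatibility of the Vign\'eras semisimple correspondence with $r_{\ell}$, combined with the Coxeter hypothesis ensuring the $\nu_{W}^{i}$ remain pairwise distinct in $\oFl$, yields $\sigma^{\rm ss}(\pi_{I})\simeq\bigoplus_{i=0}^{d-1}\nu_{W}^{i}$.

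\emph{Monodromy via lifting and Zelevinski duality.} For the Lefschetz operator I use the enhanced compatibility $\sigma^{Z}(\pi_{I})=r_{\ell}(\sigma(Z(v_{I}(\oQl))))$, valid because $\lambda_{v_{I}(\oQl)}=\lambda_{I}=\lambda_{\pi_{I}}$ (both equal the transpose of the Levi block-size partition of $M_{I}$, so that in particular $v_{I}(\oQl)$ picks out $\pi_{I}$, not $\pi_{I\cup\{0\}}$, among the constituents of its reduction). Classically, $v_{I}(\oQl)$ is the generalized Steinberg attached to $P_{I}$, and its Zelevinski dual is the corresponding generalized Speh; the Langlands parameter of the latter has semisimple part $\bigoplus\nu_{W}^{i}$ together with a monodromy $N^{Z}$ whose Jordan structure is encoded by the decomposition of $I\subseteq S$ into maximal consecutive runs of simple roots (a run of length $r$ contributes a Jordan block of size $r+1$, with isolated blocks of size $1$ at the breaks). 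Transposition via Deligne's filtration preserves the Jordan partition while reorienting $N\colon V\to V(-1)$ into $L\colon V\to V(1)$; in an eigenbasis $(e_{0},\ldots,e_{d-1})$ with $e_{j}$ an eigenvector for $\nu_{W}^{j}$ listed in natural order, $L$ acquires precisely the stated matrix $\sum_{\alpha_{i}\in I}E_{i-1,i}$, and reduction mod $\ell$ is automatic by the Coxeter condition.

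\emph{Main obstacle.} The delicate step is the explicit determination of $\sigma(Z(v_{I}(\oQl)))$: translating $v_{I}$ into a multisegment, applying the Zelevinski involution, and reading off the monodromy's Jordan structure in terms of the combinatorics of $I\subseteq S$. Keeping careful track of orientations through the transposition $(V,N)\leftrightarrow(V,L)$, combined with the Hecke normalization of Langlands, is what makes the matrix come out exactly as stated rather than in a basis-permuted variant.
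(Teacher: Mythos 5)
Your proposal follows essentially the same route as the paper: reduce to $I\subseteq S$ by twist-compatibility of $\sigma^Z$ and of the proposed answer; observe $\pi_{I}$ occurs in $r_{\ell}(v_{I}(\oQl))$ with $\lambda_{\pi_{I}}=\lambda_{v_{I}(\oQl)}=\lambda_{I}$ so that the enhanced compatibility of the Vign\'eras correspondence applies to the pair $(\pi_{I},v_{I}(\oQl))$ and not to $\pi_{I\cup\{0\}}$; conclude $(\sigma^{\rm ss}(\pi_I),L(\pi_I))=r_{\ell}(\sigma^{\rm ss}(v_I(\oQl)),L(v_I(\oQl)))$.

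The one place you diverge is the final step: the paper simply cites the $\ell$-adic computation of $(\sigma^{\rm ss}(v_I(\oQl)),L(v_I(\oQl)))$ from \cite[3.2.4]{lefschetz}, whereas you sketch it directly by passing through the Zelevinski involution (generalized Steinberg $\mapsto$ generalized Speh) and the segment combinatorics of $I$. Your sketch is consistent with the claimed matrix (runs of length $r$ in $I$ give Jordan blocks of size $r+1$ for $N^{Z}$ and hence for $L$, isolated indices give blocks of size $1$), but, as you yourself flag under ``Main obstacle,'' you do not actually carry out the bookkeeping of segments, orientations, and the Hecke normalization that pins down the matrix in the stated eigenbasis rather than a permuted one; the paper avoids that by citation. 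One small inaccuracy: the Coxeter hypothesis is not needed for the semisimple part $\sigma^{\rm ss}(\pi_I)\simeq\bigoplus_i\nu_W^i$ — that is a formal consequence of compatibility with $r_{\ell}$ regardless of whether the $\nu_W^i$ stay distinct mod $\ell$.
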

\begin{proof}
  The correspondence is compatible with twisting in the sense that
  $\sigma^{Z}(\pi\otimes\nu_{G})= \sigma^{Z}(\pi)\otimes\nu_{W}$. Since our
  proposed solution is also compatible with twisting, we may assume
  that $I\subset S$. In this case we know that $\pi_{I}$ appears in
  $r_{\ell}(v_{I}(\oQl))$. We also know that
  $\lambda_{\pi_{I}}=\lambda_{v_{I}(\oQl)}=\lambda_{I}$. Therefore we
  have $\left(\sigma^{\rm ss}(\pi_{I}), L(\pi_{I})\right)=r_{\ell}\left(\sigma^{\rm
    ss}(v_{I}(\oQl)), L(v_{I}(\oQl))\right)$. But the latter was
computed in \cite[3.2.4]{lefschetz}.

\end{proof}


\subsection{Computation of some Ext groups}

This section is rather technical in nature and should be skipped at
first reading. We first check that some computations of Ext
groups between the $v_{J}$'s and the $i_{I}$'s 
performed by Orlik in \cite{Orlikext} remain valid in our present
context, although Orlik's hypotheses are not satisfied. Then we
proceed to compute  Ext groups between the $\pi_{J}$'s and $i_{I}$'s.

\alin{Context and notation}
We fix a uniformizer $\varpi$ of $K$ and we will consider Yoneda
extensions in the category $\Mo{\oFl}{G/\varpi^{\ZM}}$ of smooth
$\oFl$-representations of $G/\varpi^{\ZM}$. 
Recall that a subset $I\subseteq S$ determines a standard parabolic subgroup
$P_{I}$, the standard Levi component of which is denoted by $M_{I}$. We
also denote by $W_{I}$ the Weyl group of $T$ in $M_{I}$, which is also
 the subgroup of $W$ generated by reflections associated to roots in
 $I$. We define a $\oFl$-vector space
$$ Y_{I}:= X^{*}(M_{I}/Z(G))\otimes_{\ZM}\oFl $$
where $X^{*}$ denotes the group of $K$-rational characters and $Z$
means ``center''.

Symbols $r_{P}$ and $i_{P}$ will stand for normalized parabolic
functors along the parabolic subgroup $P$ and $\delta_{P}$ will denote the
modulus character of $P$. With this notation we
have e.g. $i_{I}(\oFl)=i_{P_{I}}(\delta_{P_{I}}^{-\frac{1}{2}})$. We will
also put $\delta=\delta_{B}^{-\frac{1}{2}}$. 
Finally, the symbol $\EC xp(T,\sigma)$ denotes the set of characters of $T$
occuring as subquotients of the admissible $\oFl T$-representation $\sigma$.

\begin{lem}\label{lemext}
Let $I$ be a strict subset of $S$. 
\begin{enumerate}
\item If $\pi$, $\pi'$ are two principal series of $M_{I}$, then
  $$\left(W_{I}\cdot\exp(T,r_{B\cap M_{I}}(\pi))\cap
  W_{I}\cdot\exp(T,r_{B\cap M_{I}}(\pi'))=\emptyset\right) \Rightarrow
  \ext{*}{\pi}{\pi'}{M_{I}/\varpi^{\ZM}}=0.$$
\item $\ext{*}{\oFl}{\oFl}{M_{I}/\varpi^{\ZM}}=\bigwedge^{*}Y_{I}$
\end{enumerate}
\end{lem}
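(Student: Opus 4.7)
The central input is that $\ell$ is banal for every proper Levi $M_I$: since $I$ is strict, $M_I$ is a product of groups $\GL_{n_i}(K)$ with $n_i<d$, and the Coxeter hypothesis $d=\mathrm{ord}_\ell(q)$ forces $d\mid\ell-1$, so $\ell\nmid\prod_{i=1}^{n_i}(q^i-1)$. In particular parabolic induction along $B\cap M_I$ is exact, the geometric lemma holds, and Jacquet modules of principal series of $M_I$ decompose as semisimple sums of $W_I$-translates (with modulus twists) of the inducing characters.

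For (i), assume $\pi=i_{B\cap M_I}(\chi)$ and $\pi'=i_{B\cap M_I}(\chi')$ are principal series. I would apply the Frobenius adjunction $i_{B\cap M_I}\dashv r_{B\cap M_I}$ to get
\[ \ext{*}{\pi}{\pi'}{M_I/\varpi^{\ZM}} \cong \ext{*}{\chi}{r_{B\cap M_I}(\pi')}{T/\varpi^{\ZM}} \cong \bigoplus_{w\in W_I}\ext{*}{\chi}{w(\chi')}{T/\varpi^{\ZM}}, \]
using the exactness of $r_{B\cap M_I}$ and the geometric lemma. The disjointness hypothesis forces $w(\chi')\neq\chi$ for all $w$, so it suffices to prove $\ext{*}{\chi}{\chi''}{T/\varpi^{\ZM}}=0$ whenever $\chi\neq\chi''$. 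For this I would use that $\Mo{\oFl}{T/\varpi^{\ZM}}$ decomposes into blocks indexed by the restriction to the maximal compact $T^{\circ}$ (using that $\ell$-banal smooth reps of $T^{\circ}$ are semisimple), and that each block is Morita equivalent to the module category of the Laurent polynomial algebra $\oFl[T/T^{\circ}\varpi^{\ZM}]\cong\oFl[\ZM^{d-1}]$, in which distinct characters correspond to quotients by distinct maximal ideals and hence have no Yoneda extensions.

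For (ii), my plan is a Hochschild--Serre reduction. Let $M_I^{1}\subset M_I$ be the joint kernel of the valuations of the block determinants, and let $r$ be the number of parts of the partition attached to $I$. The image of $\varpi^{\ZM}$ in $M_I/M_I^{1}\cong\ZM^{r}$ is the diagonal $(n_1,\ldots,n_r)\ZM$, so
\[ 1\to M_I^{1}\to M_I/\varpi^{\ZM}\to \ZM^{r}/(n_{1},\ldots,n_{r})\ZM\to 1, \]
and the quotient is $\ZM^{r-1}\oplus\ZM/n\ZM$ with $n=\gcd(n_i)$. Because $d\mid\ell-1$ we have $n<d<\ell$, so $\ZM/n\ZM$ has trivial $\oFl$-cohomology, and Hochschild--Serre reads $H^{p}(\ZM^{r-1},H^{q}(M_I^{1},\oFl))\Rightarrow \ext{p+q}{\oFl}{\oFl}{M_I/\varpi^{\ZM}}$. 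The main substantive step is to prove $H^{*}(M_I^{1},\oFl)=\oFl$ concentrated in degree zero; by K\"unneth this reduces to $H^{*}(\GL_{n}(K)^{1},\oFl)=\oFl$ for each $n<d$. For this I would invoke the Bruhat--Tits building of $\GL_n(K)$: it is contractible, and $\GL_n(K)^{1}$ acts with parahoric stabilizers whose pro-orders are prime to $\ell$ by banality, so the equivariant spectral sequence collapses and identifies $H^{*}(\GL_n(K)^{1},\oFl)$ with the cohomology of the quotient; since $\SL_{n}(K)\subset\GL_n(K)^{1}$ already has fundamental domain a single alcove and the remaining compact part of the center acts trivially on the building, this quotient is contractible.

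Putting everything together yields $\ext{*}{\oFl}{\oFl}{M_I/\varpi^{\ZM}}\cong H^{*}(\ZM^{r-1},\oFl)\cong \bigwedge^{*}\oFl^{r-1}\cong \bigwedge^{*}Y_I$, where the last identification uses the short exact sequence $0\to Y_I\to X^{*}(M_I)\otimes\oFl\to X^{*}(Z(G))\otimes\oFl\to 0$ to compute $\dim Y_I=r-1$. The hard part of this plan is the cohomological collapse on the building together with the validity of the Hochschild--Serre formalism in the smooth-representation setting; both are variants of arguments appearing in Orlik's paper, but need to be reverified here since his hypotheses do not cover our present modular situation.
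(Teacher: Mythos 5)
Your route is essentially the one the paper follows, just with the citations unpacked: the paper disposes of (i) by invoking Vign\'eras' banal Ext-vanishing for disjoint cuspidal supports, and of (ii) by invoking Orlik's Prop.~9 to pass to $M_I/M_I^0\varpi^{\ZM}$ (where $M_I^0$ is the subgroup generated by compact elements, which for $M_I$ a product of $\GL_{n_i}$ coincides with your $M_I^1$) and then computing the cohomology of a finitely generated abelian group with $\ell'$-torsion. Your Bruhat--Tits acyclicity argument is the substance of what Orlik's proposition does.

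Three remarks, the last of which is the one that matters. In (i), the geometric lemma gives a $W_I$-\emph{filtration} of $r_{B\cap M_I}(\pi')$, not a direct sum; the vanishing still follows by the long exact sequences, and should then be propagated to subquotients of principal series, since the lemma is later applied with $\pi'=\delta_{P_I}^{-1/2}$, which is not a full induction. In (ii), the reason $X/\GL_n(K)^1$ is a single closed alcove is not that the extra compact part is central (it is not: $\GL_n(K)^1/\SL_n(K)\simeq\OC^\times$ is realized by non-scalar elements), but that $\GL_n(K)^1$ is type-preserving and $\SL_n(K)\subset\GL_n(K)^1$ already acts transitively on simplices of each fixed type. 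Finally, the real gap: your last isomorphism $\ext{*}{\oFl}{\oFl}{M_I/\varpi^{\ZM}}\cong\bigwedge^{*}Y_I$ is only a dimension count. The statement is used \emph{functorially in $I$}, through the restriction maps $Y_I\to Y_J$, in Remark~\ref{gradedmodule}, Proposition~\ref{extiIiJ} and Corollary~\ref{calculext1}, so the identification has to be the canonical one. The paper produces it via $\chi\mapsto\val_K\circ\chi\colon X^*(M_I/Z(G))\simto\Hom(M_I/M_I^0\varpi^{\ZM},\ZM)$; you should record the analogous natural identification of your Hochschild--Serre $H^1$ with $Y_I$, not just an abstract isomorphism of exterior algebras.
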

\begin{proof}
  i) The assumption means that $\pi$ and $\pi'$ have disjoint cuspidal
  supports. Since $\ell$ is banal for $M_{I}$, the vanishing of Ext
  follows from \cite[6.1]{Vigext}.

ii) The argument in \cite[Prop. 9]{Orlikext} shows that
$\ext{*}{\oFl}{\oFl}{M_{I}/\varpi^{\ZM}}=\ext{*}{\oFl}{\oFl}{M_{I}/M_{I}^{0}\varpi^{\ZM}}$
where $M_{I}^{0}$ is the subgroup of $M_{I}$ generated by compact
elements (note that $\ell$ is prime to the pro-index
$[M_{I}^{0}:[M_{I},M_{I}]]$). Since $\ell$ is also prime to the
torsion in the abelian group $M_{I}/M_{I}^{0}\varpi^{\ZM}$, we know
that 
$\ext{*}{\oFl}{\oFl}{M_{I}/M_{I}^{0}\varpi^{\ZM}}
=\bigwedge^{*}(\Hom_{\rm gps}(M_{I}/M_{I}^{0}\varpi^{\ZM},\oFl))
=\bigwedge^{*}(\Hom_{\rm gps}(M_{I}/M_{I}^{0}\varpi^{\ZM},\ZM)\otimes_{\ZM}\oFl)
$.
Finally, the usual map $\chi\mapsto {\rm val}_{K}\circ \chi$ yields an
isomorphism $X^{*}(M_{I}/Z(G))\To{} \Hom_{\rm
  gps}(M_{I}/M_{I}^{0}\varpi^{\ZM},\ZM)$.
\end{proof}

\begin{rak}\label{gradedmodule}
  A consequence of item ii) of the foregoing lemma and Frobenius reciprocity
 is that for any representation
  $\pi$ of $G/\varpi^{\ZM}$, the graded space
  $\ext{*}{\pi}{i_{I}(\oFl)}{G/\varpi^{\ZM}}\simeq
  \ext{*}{(\pi)_{U_{P_{I}}}}{\oFl}{M_{I}/\varpi^{\ZM}}$
 is naturally a graded right module
     over the graded algebra $\bigwedge^{*}Y_{I}$.
In particular there is a canonical graded map 
$$ \hom{\pi}{i_{I}(\oFl)}{G/\varpi^{\ZM}} \otimes_{\oFl} 
\bigwedge^{*}Y_{I}
\To{} \ext{*}{\pi}{i_{I}(\oFl)}{G/\varpi^{\ZM}}.$$
This map is clearly functorial in $\pi$. It is also functorial in $I$
in the sense that if $J\subset I$ we have a commutative diagram
$$\xymatrix{ \hom{\pi}{i_{I}(\oFl)}{G/\varpi^{\ZM}} \otimes_{\oFl} 
\bigwedge^{*}Y_{I} \ar[r] \ar[d] &
\ext{*}{\pi}{i_{I}(\oFl)}{G/\varpi^{\ZM}} \ar[d] \\
 \hom{\pi}{i_{J}(\oFl)}{G/\varpi^{\ZM}} \otimes_{\oFl} 
\bigwedge^{*}Y_{J}
\ar[r] &  \ext{*}{\pi}{i_{J}(\oFl)}{G/\varpi^{\ZM}}
}
$$
where the vertical maps are induced by the inclusion
$i_{I}(\oFl)\injo i_{J}(\oFl)$ and the restriction map $Y_{I}\To{}Y_{J}$.

\end{rak}

\begin{prop}\label{extiIiJ}
Let $I,J$ be two subsets of $S$, with $I$ a \emph{strict} subset. Then
the canonical map
$$ \hom{i_{J}(\oFl)}{i_{I}(\oFl)}{G/\varpi^{\ZM}} \otimes_{\oFl}  \bigwedge^{*}Y_{I}
\To{} \ext{*}{i_{J}(\oFl)}{i_{I}(\oFl)}{G/\varpi^{\ZM}}$$
is an isomorphism. In other words, we have
$$ \ext{*}{i_{J}(\oFl)}{i_{I}(\oFl)}{G/\varpi^{\ZM}}\simeq 
\left\{  
  \begin{array}{ll}
    \bigwedge^{*} Y_{I} & \hbox{if } J\supseteq I \\
0 & \hbox{otherwise}
  \end{array}
\right..
$$
Moreover, the natural map 
$\ext{*}{i_{K}(\oFl)}{i_{I}(\oFl)}{G/\varpi^{\ZM}}\To{}
\ext{*}{i_{J}(\oFl)}{i_{I}(\oFl)}{G/\varpi^{\ZM}}$ is an isomorphism for any $J\supseteq K \supseteq I$.
\end{prop}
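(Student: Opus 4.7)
The plan is to reduce the global Ext computation to one on the Levi $M_{I}$ using Frobenius reciprocity, stratify via the geometric lemma, and then apply Lemma \ref{lemext} piecewise. Concretely, Remark \ref{gradedmodule} applied with $\pi=i_{J}(\oFl)$ yields
$$ \ext{*}{i_{J}(\oFl)}{i_{I}(\oFl)}{G/\varpi^{\ZM}} \simeq \ext{*}{(i_{J}(\oFl))_{U_{P_{I}}}}{\oFl}{M_{I}/\varpi^{\ZM}}.$$
The geometric lemma then provides a finite filtration of $(i_{J}(\oFl))_{U_{P_{I}}}$ whose graded pieces are parabolically induced representations of $M_{I}$, indexed by double cosets in $W_{I}\backslash W/W_{J}$, with cuspidal support on $T$ contained in the $W_{I}$-orbit of a corresponding twist $w(\delta)$ of the half modulus.

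Next I would analyse which pieces contribute. Since $I\subsetneq S$, the Levi $M_{I}$ is proper, $\ell$ is banal for $M_{I}$, and Lemma \ref{lemext}(i) applies. The trivial representation of $M_{I}$ has a single fixed cuspidal support on $T$; under the Coxeter congruence the character $\delta$ is $W$-regular (as already used in the proof of Proposition \ref{proppiI}), so the $W$-orbit of $\delta$ splits into distinct $W_{I}$-orbits indexed by the pieces. A direct combinatorial inspection shows that exactly one such $W_{I}$-orbit meets the cuspidal support of $\oFl$ — the one coming from the identity double coset — and the corresponding graded piece is a copy of the trivial $M_{I}$-representation occurring precisely when $J\supseteq I$. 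By Lemma \ref{lemext}(i), all other graded pieces have vanishing Ext with $\oFl$, and pairwise Ext between non-isomorphic graded pieces also vanishes, so the spectral sequence associated to the filtration degenerates at $E_{1}$.

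Consequently, when $J\not\supseteq I$ the total Ext vanishes, and when $J\supseteq I$ the sole surviving contribution gives
$$ \ext{*}{i_{J}(\oFl)}{i_{I}(\oFl)}{G/\varpi^{\ZM}} \simeq \ext{*}{\oFl}{\oFl}{M_{I}/\varpi^{\ZM}} = \bigwedge^{*}Y_{I}$$
by Lemma \ref{lemext}(ii). The identification with the canonical map of Remark \ref{gradedmodule} is built in, since the $\bigwedge^{*}Y_{I}$-action on Ext is transported from the action on the target side. For the naturality statement with $J\supseteq K\supseteq I$, the commutative square of Remark \ref{gradedmodule} reduces matters to checking that the identity Mackey summand of $(i_{K}(\oFl))_{U_{P_{I}}}$ maps isomorphically to that of $(i_{J}(\oFl))_{U_{P_{I}}}$ under $i_{J}\injo i_{K}$; both summands are a single copy of $\oFl$ and the inclusion restricts to the identity there.

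The main obstacle is the cuspidal-support combinatorics in the second step: one must verify that the twists $w(\delta)$, as $w$ ranges over $W_{I}\backslash W/W_{J}$, fall into pairwise distinct $W_{I}$-orbits, so that Lemma \ref{lemext}(i) cleanly separates them. This is precisely where the Coxeter congruence relation does the real work in our setting, standing in for the stronger banality hypothesis under which Orlik originally worked.
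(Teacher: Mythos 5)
There is a genuine gap in the case $J\not\supseteq I$. You correctly reduce to $M_I$ by Frobenius reciprocity, invoke the geometric Mackey filtration of $r_{P_I}\circ i_{P_J}(\delta_{P_J}^{-1/2})$ indexed by $W_I\backslash W/W_J$, and use the $W$-regularity of $\delta$ (the Coxeter hypothesis) together with Lemma \ref{lemext}(i) to kill the pieces with $w\notin W_I$. Up to here you follow exactly the paper's route. But your next assertion is wrong: the identity piece $Q_1$ is \emph{always} present in the filtration, not ``precisely when $J\supseteq I$''. What changes with $J$ is its shape: when $J\supseteq I$ one has $M_I\cap P_J=M_I$ and $Q_1$ is the one-dimensional character $\delta_{P_I}^{-1/2}$, but when $J\not\supseteq I$, $Q_1 = i_{M_I\cap P_J}\bigl(\delta_{P_{I\cap J}}^{-1/2}\bigr)$ is a \emph{nontrivial parabolically induced} representation of $M_I$. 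Crucially, its cuspidal support on $T$ is still $W_I\cdot\{\delta\}$, i.e.\ the \emph{same} $W_I$-orbit as that of the target $\delta_{P_I}^{-1/2}$. So Lemma \ref{lemext}(i) gives you no vanishing at all for this piece, and your claim that ``when $J\not\supseteq I$ the total Ext vanishes'' does not follow from the cuspidal-support combinatorics alone.

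This is precisely where the paper does additional work that your proposal omits: one applies Casselman's second adjunction to rewrite $\ext{*}{Q_1}{\delta_{P_I}^{-1/2}}{M_I/\varpi^{\ZM}}$ as an Ext between two explicit \emph{characters} of the smaller Levi $M_{I\cap J}$, namely $\delta_{P_{I\cap J}}^{-1/2}$ and $\delta_{P'_{I\cap J}}^{-1/2}$ where $P'_{I\cap J}$ is a (possibly non-standard) parabolic with the same Levi but different unipotent radical, and then applies Lemma \ref{lemext}(i) once more; the Ext vanishes unless $P'_{I\cap J}=P_{I\cap J}$, which is equivalent to $J\supseteq I$. Without this second reduction your argument cannot distinguish the $J\supseteq I$ from the $J\not\supseteq I$ case, since in both the identity Mackey stratum has cuspidal support $W_I\cdot\{\delta\}$. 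The rest of your outline (the identification with $\bigwedge^{*}Y_I$ via Lemma \ref{lemext}(ii) when $J\supseteq I$, and the naturality in $K$ from the functoriality of the canonical map) is consistent with the paper's proof.
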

\begin{proof}
We follow \cite[Prop. 15]{Orlikext} but we avoid Lemma 16 of
\emph{loc. cit.} which might fail to be true in our context. By Frobenius reciprocity, we have 
$$\ext{*}{i_{J}(\oFl)}{i_{I}(\oFl)}{G/\varpi^{\ZM}} =
\ext{*}{r_{P_{I}}\circ i_{P_{J}}(\delta_{P_{J}}^{-\frac{1}{2}})}
{\delta_{P_{I}}^{-\frac{1}{2}}}{M_{I}/\varpi^{\ZM}} $$
and by the geometric Mackey formula, $r_{P_{I}}\circ i_{P_{J}}(\delta_{P_{J}}^{-\frac{1}{2}})$ has a
filtration with graded pieces of the form $Q_{w}:=i_{M_{I}\cap w(P_{J})}
\left(w(\delta_{P_{J\cap w^{-1}(I)}}^{-\frac{1}{2}})\right)$,
 where $w$ runs over all elements
in  $W$ such that $w(J)\subset \Phi^{+}$ and $w^{-1}(I)\subset \Phi^{+}$ (this is a
complete set of representatives of double cosets in $W_{I}\ba W/W_{J}$). 
Using again the geometric Mackey formula we get
$$ W_{I}\cdot \exp\left(T,r_{B\cap M_{I}}(Q_{w})\right) = 
W_{I}\cdot \exp\left(T,r_{B\cap M_{I\cap w(J)}}(w(\delta_{P_{J\cap
      w^{-1}(I)}}^{-\frac{1}{2}}) )\right) =  
W_{I}\cdot \{w(\delta)\}.
$$
On the other hand, we have 
$$W_{I}\cdot \exp\left(T,r_{B\cap M_{I}}(\delta_{P_{I}}^{-\frac{1}{2}}
  )   \right) =  W_{I}\cdot \{\delta\}. $$
Since $\delta$ is $W$-regular, item i) of the above Lemma tells us that 
$\ext{*}{Q_{w}}{\delta_{P_{I}}^{-\frac{1}{2}}}{M_{I}/\varpi^{\ZM}}=0$
unless $w\in W_{I}$. In this case, we must have $w=1$ 
 so that  $Q_{w}=Q_{1}= i_{M_{I}\cap P_{J}}(\delta_{P_{I\cap J}}^{-\frac{1}{2}})$ is the top
quotient of the geometric Mackey filtration and the canonical map 
$$ \ext{*}{i_{M_{I}\cap P_{J}}(\delta_{P_{I\cap J}}^{-\frac{1}{2}})}{\delta_{P_{I}}^{-\frac{1}{2}}}
{M_{I}/\varpi^{\ZM}} \To{} \ext{*}{i_{J}(\oFl)}{i_{I}(\oFl)}{G/\varpi^{\ZM}} $$
is an isomorphism. Using Casselman's reciprocity, the LHS identifies
with 
$$\ext{*}{\delta_{P_{I\cap J}}^{-\frac{1}{2}}}
{r_{M_{I}\cap \overline{P_{J}}}(\delta_{P_{I}}^{-\frac{1}{2}})}
{M_{I\cap J}/\varpi^{\ZM}}
=
\ext{*}{\delta_{P_{I\cap J}}^{-\frac{1}{2}}}
{\delta_{P'_{I\cap J}}^{-\frac{1}{2}}}
{M_{I\cap J}/\varpi^{\ZM}}
$$
where $\overline{P_{J}}$ is the opposite parabolic subgroup to $P_{J}$ w.r.t
$M_{J}$ and $P'_{I\cap J}$ is the semistandard parabolic subgroup with
Levi component $M_{I\cap J}$ and unipotent
radical $U_{I}(\overline{U_{J}}\cap M_{I})$. Let $B'$ be the Borel subgroup
with unipotent radical $U_{I}(\overline{U_{J}}\cap M_{I})(U_{\emptyset}\cap
M_{I\cap J})$. Point i) of the previous Lemma tells us that 
the RHS of the last displayed
formula vanishes unless there is $w\in W_{I\cap J}$ such that $w(B)=B'$.
But then $w(B)\cap M_{I\cap J}= B'\cap M_{I\cap J}$ hence $w=1$, thus
$P'_{I\cap J}=P_{I\cap J}$ which is possible only if $J\supseteq I$.

Eventually we have proved the desired vanishing when $J$ does not contain
$I$, and we have proved that if $J\supseteq I$, the canonical map 
$$ \ext{*}{\delta_{P_{I}}^{-\frac{1}{2}}}{\delta_{P_{I}}^{-\frac{1}{2}}}
{M_{I}/\varpi^{\ZM}} \To{} \ext{*}{i_{J}(\oFl)}{i_{I}(\oFl)}{G/\varpi^{\ZM}} $$
is an isomorphism. We conclude the computation
using item ii) of the previous Lemma. The last assertion follows from
the functorial nature of the above map.
\end{proof}

\ali The complex (\ref{complexe}) yields a spectral sequence
$$ E^{pq}_{1}= \bigoplus_{K\supseteq J, |K\setminus J|=p}
\ext{q}{i_{K}(\oFl)}{i_{I}(\oFl)}{G/\varpi^{\ZM}} \Rightarrow
\ext{p+q}{v_{J}(\oFl)}{i_{I}(\oFl)}{G/\varpi^{\ZM}} $$
whence in particular an edge map
\begin{equation}
\ext{*}{\oFl}{i_{I}(\oFl)}{G/\varpi^{\ZM}}\To{}
\ext{|S\setminus J|+*}{v_{J}(\oFl)}{i_{I}(\oFl)}{G/\varpi^{\ZM}}.\label{edge}
\end{equation}
Thanks to the last Proposition, the same argument as \cite[Prop. 17]{Orlikext}
gives the following expression.
\begin{cor} \label{calculext1}
Let $I,J$ be subsets of $S$ with $I$ a strict subset.
  \begin{enumerate}\item If  $I\cup J\neq S$ then 
$\ext{*}{v_{J}(\oFl)}{i_{I}(\oFl)}{G/\varpi^{\ZM}}=0$.
\item If $I\cup J=S$, then the map (\ref{edge}) is an isomorphism, so
  we get an isomorphism
   $$\ext{*}{v_{J}(\oFl)}{i_{I}(\oFl)}{G/\varpi^{\ZM}}\simeq
  \bigwedge^{*-|S\setminus J|} Y_{I}.$$
Moreover, if $I'$ is another strict subset of $S$ which contains $I$,
then the natural map $\ext{*}{v_{J}(\oFl)}{i_{I'}(\oFl)}{G/\varpi^{\ZM}} \To{}
\ext{*}{v_{J}(\oFl)}{i_{I}(\oFl)}{G/\varpi^{\ZM}}$ is induced by the
natural restriction map $Y_{I'}\To{} Y_{I}$.
 \end{enumerate}
\end{cor}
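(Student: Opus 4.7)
The plan is to exploit the spectral sequence
$$E_1^{pq} = \bigoplus_{K \supseteq J,\, |K\setminus J|=p} \ext{q}{i_K(\oFl)}{i_I(\oFl)}{G/\varpi^{\ZM}} \Rightarrow \ext{p+q}{v_J(\oFl)}{i_I(\oFl)}{G/\varpi^{\ZM}}$$
obtained by applying $\hom{-}{i_I(\oFl)}{G/\varpi^{\ZM}}$ to the resolution (\ref{complexe}) of $v_J(\oFl)$, and to combine it with Proposition \ref{extiIiJ}. Set $M := I \cup J$. By that Proposition, the term $E_1^{pq}$ vanishes unless $K \supseteq I$, hence unless $K \supseteq M$; and when $K \supseteq I$ we have a canonical identification $\ext{q}{i_K(\oFl)}{i_I(\oFl)}{G/\varpi^{\ZM}} \simeq \bigwedge^{q} Y_I$ through which every transition map induced by an inclusion $K' \supseteq K \supseteq I$ is the identity of $\bigwedge^q Y_I$.

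Fix $q$. The surviving columns in the $q$-th row of $E_1$ are then indexed by subsets $K$ with $M \subseteq K \subseteq S$, or equivalently by subsets $T \subseteq S \setminus M$ via $K = M \cup T$, with $p = |M \setminus J| + |T|$. After these identifications, the $q$-th row becomes $\bigwedge^q Y_I$ tensored with the augmented cochain complex of the full simplex on $S \setminus M$, shifted by $|M \setminus J|$. If $I \cup J \neq S$, this simplex is non-empty, so its augmented complex is acyclic; thus $E_2 = 0$, proving (i). If $I \cup J = S$, only the column $K = S$ survives, sitting at $p = |S \setminus J|$, and the spectral sequence collapses to give
$$\ext{*}{v_J(\oFl)}{i_I(\oFl)}{G/\varpi^{\ZM}} \simeq \bigwedge^{*-|S \setminus J|} Y_I,$$
realised concretely by the edge map (\ref{edge}).

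For the functoriality assertion, an inclusion $I \subseteq I'$ of strict subsets induces $i_{I'}(\oFl) \hookrightarrow i_I(\oFl)$ and hence a morphism between the spectral sequences for $I'$ and $I$. At the $E_1$-level, the commutative diagram of Remark \ref{gradedmodule} shows that the induced map on the surviving terms is the identity on the simplicial factor tensored with the restriction $\bigwedge^* Y_{I'} \to \bigwedge^* Y_I$. Passing to the abutment yields the stated description.

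The main obstacle is the bookkeeping needed to identify the $q$-th row of $E_1$ with the augmented simplicial cochain complex of the simplex on $S \setminus M$, i.e.\ checking that, once everything has been identified with $\bigwedge^q Y_I$, the $d_1$-differential is, up to sign, the standard simplicial one. This is essentially the step performed in \cite[Prop.~17]{Orlikext}, and the task here is to confirm it survives in our non-banal setting --- which is exactly what Proposition \ref{extiIiJ} has made possible.
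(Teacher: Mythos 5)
Your proof is correct and follows essentially the same route as the paper: the paper derives exactly the same $E_1^{pq}$-spectral sequence from the resolution (\ref{complexe}) and then simply invokes ``the same argument as \cite[Prop.~17]{Orlikext}'', which is precisely the simplicial-cochain argument you spell out, with Proposition \ref{extiIiJ} supplying the vanishing/collapse pattern and Remark \ref{gradedmodule} supplying the compatibility needed to identify the $d_1$-differential with the standard one. You have simply unpacked what the paper leaves implicit by citation.
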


\begin{rak}\label{remext}
We may recast the foregoing corollary by stating that the canonical
map
$$\ext{|S\setminus J|}{v_{J}(\oFl)}{i_{I}(\oFl)}{G/\varpi^{\ZM}}
\otimes_{\oFl} \bigwedge^{*}Y_{I} \To{}
\ext{*+|S\setminus J|}{v_{J}(\oFl)}{i_{I}(\oFl)}{G/\varpi^{\ZM}}$$
is an isomorphism, and that 
$\ext{|S\setminus J|}{v_{J}(\oFl)}{i_{I}(\oFl)}{G/\varpi^{\ZM}}\simeq
\oFl$ if $J\cup I=S$ and is zero otherwise.  
\end{rak}

Next we turn to extensions between the $\pi_{J}$'s and the $i_{I}$'s.

\begin{prop}\label{calculext2}
  Let $J$ be a strict subset of $\wt{S}$, and
  $I$ a strict subset of $S$. 
  \begin{enumerate}
  \item If $0\in J$, then $\ext{*}{\pi_{J}}{i_{I}(\oFl)}{G/\varpi^{\ZM}}=0$.
  \item Otherwise, the natural map is an isomorphism
$$\ext{*}{v_{J}(\oFl)}{i_{I}(\oFl)}{G/\varpi^{\ZM}} \simto
\ext{*}{\pi_{J}}{i_{I}(\oFl)}{G/\varpi^{\ZM}}.$$ 
  \end{enumerate}
\end{prop}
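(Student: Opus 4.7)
My strategy is to deduce both (i) and (ii) from a single short exact sequence relating $v_J(\oFl)$ to the two $\pi$'s above it, combined with Corollary \ref{calculext1}. For every strict $J\subsetneq S$ I would first establish the short exact sequence
$$ 0 \to \pi_J \to v_J(\oFl) \to \pi_{J\cup\{0\}} \to 0 $$
of $\oFl G$-representations. The two Jordan--H\"older factors are given by Proposition \ref{decomp}; the delicate point is to pin down the socle structure. I expect $\pi_J$ to sit as the subrepresentation because it is the factor inheriting, via reduction of a $G$-stable $\oZl$-lattice in $v_J(\oQl)$, the $\lambda_J$-degenerate Whittaker model of the irreducible $\ell$-adic $v_J(\oQl)$, whereas $\pi_{J\cup\{0\}}$ has strictly more degenerate Whittaker type and hence cannot appear in the socle. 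Placing $\pi_J$ on the left also makes the natural map of (ii) well defined, since it arises from the inclusion $\pi_J\hookrightarrow v_J(\oFl)$ by contravariance of $\Ext$.

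Applying $\Hom(-, i_I(\oFl))$ to this sequence yields a long exact sequence
$$\cdots \to \ext{n}{\pi_{J\cup\{0\}}}{i_I(\oFl)}{G/\varpi^{\ZM}} \to \ext{n}{v_J(\oFl)}{i_I(\oFl)}{G/\varpi^{\ZM}} \to \ext{n}{\pi_J}{i_I(\oFl)}{G/\varpi^{\ZM}} \to \cdots$$
whose middle term is computed by Corollary \ref{calculext1} and whose right-hand arrow is precisely the natural map of (ii). Thus (ii) for $J$ and (i) for $J\cup\{0\}$ are both equivalent to the vanishing of $\ext{*}{\pi_{J\cup\{0\}}}{i_I(\oFl)}{G/\varpi^{\ZM}}$, and hence the two halves of the proposition are equivalent.

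To break this formal equivalence and actually establish one of the two statements, I would fall back on a direct Frobenius-reciprocity computation. By adjunction, $\ext{*}{\pi_J}{i_I(\oFl)}{G/\varpi^{\ZM}}\simeq\ext{*}{r_{P_I}(\pi_J)}{\delta_{P_I}^{-1/2}}{M_I/\varpi^{\ZM}}$. Since $\ell$ is banal for $M_I$, the $M_I$-representation $r_{P_I}(\pi_J)$ is semisimple; its irreducible summands correspond, via transitivity $r_B=r_{B\cap M_I}\circ r_{P_I}$ and Proposition \ref{proppiI} iv), to the $W_I$-orbits among the characters $\{w^{-1}(\delta):\,w(\wt X_S)\subset\wt X_J\}$. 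Lemma \ref{lemext} i) concentrates the Ext on the summand whose $W_I$-orbit contains $\delta_{P_I}^{-1/2}|_T$, and a combinatorial check --- using that $\delta$ is $W$-regular by the Coxeter congruence, as already exploited in the proof of Proposition \ref{proppiI} i) --- should show that this summand is present exactly when $0\notin J$ and $J\cup I=S$, matching the answer of Corollary \ref{calculext1}, and absent when $0\in J$, yielding (i).

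The principal obstacle I foresee is twofold. First, rigorously identifying $\pi_J$ as the socle of $v_J(\oFl)$ is delicate because $v_J(\oFl)$ is not semisimple; the cleanest route is probably to invoke Vign\'eras' characterization of $\pi_J$ by its partition $\lambda_J$ together with the fact that generic and degenerate Whittaker functionals reduce well modulo $\ell$, rather than a bare lattice argument. Second, the combinatorial bookkeeping of the chambers $\wt X_J$ and of the $W_I$-orbits of the $w^{-1}(\delta)$'s has to be carried out precisely enough to recover not just the vanishing or nonvanishing of each Ext group but also the full graded answer $\bigwedge^{*-|S\setminus J|}Y_I$ coming from Lemma \ref{lemext} ii).
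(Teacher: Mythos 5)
Your strategy matches the paper's: reduce (ii) to the vanishing statement (i) for $J\cup\{0\}$ via the two-step filtration of $v_J(\oFl)$ with graded pieces $\pi_J$ and $\pi_{J\cup\{0\}}$, then prove (i) directly by Frobenius reciprocity, Lemma \ref{lemext}~i), and the chamber combinatorics encoded in Proposition \ref{proppiI}~iv). The paper phrases the reduction more tersely (``ii)\ follows from i)\ since $[v_J(\oFl)]=[\pi_J]+[\pi_{J\cup\{0\}}]$'') and then carries out the combinatorial check explicitly: the condition $w(\wt X_S)\subset\wt X_J$ with $0\in J$ forces $w(d-1)<w(0)$, which no element of $W_I$ with $I\subsetneq S$ can satisfy.

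Two small caveats about your write-up, neither of which changes the conclusion but both worth tightening. First, your justification that $\pi_J$ is the socle of $v_J(\oFl)$ is shaky: degenerate Whittaker functionals arise from the (exact) $\psi$-twisted Jacquet functor and hence do not by themselves distinguish socle from cosocle, and in any case $\lambda_{J\cup\{0\}}<\lambda_J$ (not $>$), so the phrase ``cannot appear in the socle'' needs a genuine argument. The paper itself merely asserts the extension is in that direction (see the remark after (\ref{hi})), so this point is a gap in both accounts, but it is independently plausible and is eventually confirmed by the Ext computation itself. Second, banality of $\ell$ for $M_I$ does \emph{not} make $r_{P_I}(\pi_J)$ semisimple; it only gives the block decomposition by cuspidal support. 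Fortunately Lemma \ref{lemext}~i) is stated for arbitrary (not necessarily irreducible or semisimple) principal series, so you can apply it directly to $r_{P_I}(\pi_J)$ without invoking semisimplicity, exactly as the paper does.
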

\begin{proof}
  Note first that ii) follows from i) since
  $[v_{J}(\oFl)]=[\pi_{J}]+[\pi_{J\cup\{0\}}]$.
Now, in order to prove i) we first use Frobenius reciprocity to get 
$$\ext{*}{\pi_{J}}{i_{I}(\oFl)}{G/\varpi^{\ZM}} =
\ext{*}{r_{P_{I}}(\pi_{J})}
{\delta_{P_{I}}^{-\frac{1}{2}}}{M_{I}/\varpi^{\ZM}}. $$
By Proposition \ref{proppiI} iv), we have
$ \exp\left(T,r_{B\cap M_{I}}(r_{P_{I}}(\pi_{J}))\right) = 
\left\{w^{-1}(\delta),\,\, w(\wt X_{S})\subset \wt X_{J}\right\}.$
Since $\exp(T,r_{B\cap
  M_{I}}(\delta_{P_{I}}^{-\frac{1}{2}}))=\{\delta\}$ and since
$\delta$ is $W$-regular, 
Lemma \ref{lemext} shows that 
we are left to prove that $\left\{w\in W,\,\, w(\wt X_{S})\subset \wt
  X_{J}\right\}\cap W_{I}=\emptyset$.
Now, identifying $W$ with $\SG_{d}$ as in paragraph \ref{reminder}, 
the condition $w(\wt X_{S})\subset \wt X_{J}$ is equivalent to 
the condition  $J=\{\alpha_{j}\in \wt{S},\, wc^{-1}(j)<w(j)\}$, so in
particular it implies the property $w(n-1)<w(0)$. However, since $I$ is
proper, this property is never satisfied by some $w\in W_{I}$.
\end{proof}

\section{The cohomology complex}\label{sec:cohomology-complex}

In this section, we focus on the useful part of the cohomology
complex, namely on that which pertains to the unipotent block of the
category of smooth $\oZl$-representations.

\subsection{The unipotent block}
According to Vign\'eras \cite[IV.6.2]{Viginduced}, the category
$\Mo{\oFl}{G}$ is a product of indecomposable Serre subcategories
called ``blocks''. This product of blocks corresponds to the
partition of the set of irreducible $\oFl$-representations according
to the ``inertia class of supercuspidal support''. 
Among them, the \emph{unipotent block} is by
definition the one which contains the trivial representation. 
In representation theory of finite groups, this would be rather called
the ``principal block''. 
Here we want to lift this block to
$\oZl$-representations. Note that the usual way of lifting idempotents
via Hensel's lemma is not adapted to the $p$-adic case, since Hecke algebras
are not finitely generated modules over $\oZl$.
Therefore, we will exhibit a progenerator of the desired block.
In all this subsection, no congruence assumption on the pair
$(q,\ell)$ is required.

\def\bu{\mathbf{b}}
\def\eu{\mathbf{e}}

\alin{Unipotent blocks for a finite $\GL_n$}
For a finite group of Lie type $\bar{G}$, we will denote by
$\bu_{\bar{G}}$ the central idempotent in the group algebra
$\Zl[\bar{G}]$ which cuts out the direct sum of all blocks which
contain a \emph{unipotent} $\oQl$-representation (in the sense of Deligne-Luzstig).


\begin{lem} \label{lemfinite}
Let $\bar{P}=\bar{M}\bar{U}$ be a parabolic subgroup of $\bar{G}$, and let
  $e_{\bar{U}}$ be the idempotent associated to the $p$-group
  $\bar{U}$. Then  we have
$    e_{\bar{U}}\bu_{\bar{G}}=e_{\bar{U}}\bu_{\bar{M}}=\bu_{\bar{M}}e_{\bar{U}}.$
\end{lem}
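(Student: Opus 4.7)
The plan is to reduce the equality, which lives in $\Zl[\bar{G}]$, to the known compatibility of Harish-Chandra restriction with the unipotent $\ell$-block decomposition of finite reductive groups.

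First I would dispose of the second equality $e_{\bar{U}}\bu_{\bar{M}}=\bu_{\bar{M}}e_{\bar{U}}$ by the elementary observation that $e_{\bar{U}}=|\bar{U}|^{-1}\sum_{u\in\bar{U}}u$ is $\bar{M}$-invariant (since $\bar{M}$ normalizes $\bar{U}$), hence commutes with every element of $\Zl[\bar{M}]$, in particular with the central element $\bu_{\bar{M}}$. What remains is the main equality $e_{\bar{U}}\bu_{\bar{G}}=e_{\bar{U}}\bu_{\bar{M}}$.

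To prove this I would use the embedding $\Zl[\bar{G}]\hookrightarrow\Ql[\bar{G}]\simeq\prod_{\pi}\mathrm{End}_{\Ql}(\pi)$, where $\pi$ runs over the irreducible $\Ql$-representations of $\bar{G}$, and check the equality factor by factor. On such a $\pi$, multiplication by $e_{\bar{U}}$ is the projection onto $\pi^{\bar{U}}$, which as a $\Ql$-representation of $\bar{M}$ is the Harish-Chandra restriction $^{*}R_{\bar{M}}^{\bar{G}}(\pi)$. Multiplication by $\bu_{\bar{G}}$ acts as the identity if $\pi$ lies in a $\Zl$-block of $\bar{G}$ containing a unipotent character and as zero otherwise, and similarly $\bu_{\bar{M}}$ acts on $^{*}R_{\bar{M}}^{\bar{G}}(\pi)$ according to whether each $\bar{M}$-constituent lies in a unipotent $\Zl$-block of $\bar{M}$ or not. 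The equality therefore reduces to the claim that $^{*}R_{\bar{M}}^{\bar{G}}$ carries the unipotent part of the $\Zl$-block decomposition of $\bar{G}$ to that of $\bar{M}$, and the non-unipotent part to the non-unipotent part.

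This last statement is precisely the fact that Harish-Chandra restriction respects the unipotent $\ell$-block decomposition, a standard result going back to Brou\'e--Michel. By Frobenius reciprocity it is equivalent to the same statement for $R_{\bar{M}}^{\bar{G}}$, which combines two ingredients: transitivity of cuspidal support (Harish-Chandra induction of a unipotent $\Ql$-character of $\bar{M}$ is a sum of unipotent $\Ql$-characters of $\bar{G}$), and compatibility of $R_{\bar{M}}^{\bar{G}}$ with the $\Zl$-block decomposition (induction of a block of $\bar{M}$ lands in a sum of blocks of $\bar{G}$). I expect this $\Zl$-block compatibility to be the main obstacle of the argument: it is the only non-formal ingredient, and one would invoke it as a black box from the modular representation theory of finite reductive groups.
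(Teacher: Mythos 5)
Your approach is essentially the same as the paper's: both reduce the lemma to the compatibility of Harish-Chandra restriction with the partition of $\Zl$-blocks into unipotent and non-unipotent, due to Brou\'e--Michel. The paper encodes the two required facts as idempotent vanishings, $\bu'_{\bar{M}}e_{\bar{U}}\bu_{\bar{G}}=0$ and $\bu_{\bar{M}}e_{\bar{U}}\bu'_{\bar{G}}=0$, and deduces them from the Brou\'e--Michel characterization of unipotent $\ell$-blocks via Deligne--Lusztig series attached to $\ell$-elements of the dual group, together with Lusztig's theorem that parabolic induction preserves Deligne--Lusztig series; the final algebraic manipulation $e_{\bar{U}}\bu_{\bar{G}}=(\bu_{\bar{M}}'+\bu_{\bar{M}})e_{\bar{U}}\bu_{\bar{G}}=\bu_{\bar{M}}e_{\bar{U}}(\bu_{\bar{G}}'+\bu_{\bar{G}})=\bu_{\bar{M}}e_{\bar{U}}$ is then immediate. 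Your check on each factor of $\Ql[\bar{G}]\simeq\prod_{\pi}\mathrm{End}_{\Ql}(\pi)$ reaches exactly the same reduction, and your disposal of the second equality by $\bar{M}$-invariance of $e_{\bar{U}}$ is the same observation the paper uses implicitly.

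One imprecision in your sketch of \emph{how} one would prove the block-compatibility: your two ingredients, (a) induction of a unipotent $\Ql$-character of $\bar{M}$ is a sum of unipotent $\Ql$-characters of $\bar{G}$, and (b) induction sends a block of $\bar{M}$ into a sum of blocks of $\bar{G}$, prove only one of the two vanishings --- namely that unipotent blocks of $\bar{M}$ induce into unipotent blocks of $\bar{G}$. You also need the complementary statement that non-unipotent blocks of $\bar{M}$ induce away from the unipotent blocks of $\bar{G}$, and (a)+(b) alone do not give this. The paper obtains both directions at once because Lusztig's theorem (cited as \cite[Cor.\ 6]{Lusznilpfin}) preserves the \emph{entire} partition into Deligne--Lusztig series, not just the $s=1$ piece, and Brou\'e--Michel then convert this into the $\ell$-block statement. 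Since you invoke the package as a black box anyway, this is a looseness in your explanatory aside rather than a gap in the proof itself.
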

\begin{proof}
According to \cite{BroueMichel}, an irreducible $\oQl$-representation $\pi$ satisfies
$\bu_{\bar{G}}\pi\neq \{0\}$ \emph{if and only if} it belongs to the
Deligne-Lusztig 
series associated to some semi-simple conjugacy class in the dual group
$\bar{G}^{*}$ which consists of $\ell$-elements. We call such a
representation $\ell$-unipotent. In this case, all irreducible
subquotients of $\pi_{U}$ are $\ell$-unipotent representations of
$M$. Indeed, this follows by adjunction from the ``dual'' statement that, 
if $\sigma$ is an $\ell$-unipotent representation of
$M$ then all irreducible subquotients of $\ind{P}{G}{\sigma}$ are
$\ell$-unipotent, see \cite[Cor. 6]{Lusznilpfin}. This shows that, denoting by
 $\bu_{\bar{G}}':=1-\bu_{\bar{G}}$  the complementary
 idempotent,
 we have $\bu_{\bar{M}}'
 e_{\bar{U}}\bu_{\bar{G}}=0$
 and $\bu_{\bar{M}} e_{\bar{U}}\bu_{\bar{G}}'=0$.
Then we get
$e_{\bar{U}}\bu_{\bar{G}}=  (\bu_{\bar{M}}'+\bu_{\bar{M}})
e_{\bar{U}}\bu_{\bar{G}} = \bu_{\bar{M}}
e_{\bar{U}}\bu_{\bar{G}} =\bu_{\bar{M}}
e_{\bar{U}}(1-\bu_{\bar{G}}')=\bu_{\bar{M}}
e_{\bar{U}}.  $
\end{proof}

\begin{fac} \label{facfinite}
Assume $\bar{G}=\GL_{n}(\FM_{q})$. Then
an irreducible $\oFl$-represen\-tation
$\bar\pi$ of 
$\bar{G}$ satisfies $\bu_{\bar G}\bar\pi\neq 0$ if and only if  it is a subquotient of
$\ind{\bar{B}}{\bar G}{\oFl}$.  
\end{fac}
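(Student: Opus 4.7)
The proof splits into two implications. For the direction that any subquotient of $\ind{\bar{B}}{\bar G}{\oFl}$ lies in the unipotent block, I would observe that $\ind{\bar{B}}{\bar G}{\oZl}$ is a $\bar G$-stable $\oZl$-lattice whose generic fibre $\ind{\bar{B}}{\bar G}{\oQl}$ decomposes, by Frobenius reciprocity and the semisimplicity of $\oQl[W]$, as a direct sum of unipotent $\oQl$-characters of $\bar G$---those in the Deligne--Lusztig series of the trivial semisimple class. Each such character lies \textit{a fortiori} in the series attached to a semisimple $\ell$-element, so by definition $\bu_{\bar G}$ acts as the identity on $\ind{\bar{B}}{\bar G}{\oQl}$. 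Integrality then propagates this action to the lattice $\ind{\bar{B}}{\bar G}{\oZl}$ and hence, after reduction modulo $\ell$, to $\ind{\bar{B}}{\bar G}{\oFl}$ and to each of its composition factors.

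For the reverse implication, the substantive input is the fact that, for $\bar G=\GL_n(\FM_q)$, the unipotent $\oQl$-characters form a basic set of the union of unipotent $\ell$-blocks. This is Dipper--James's theorem (recast later in Geck's general framework): in a suitable ordering, the submatrix of the $\ell$-decomposition matrix with unipotent rows and columns indexed by simple $\oFl$-reps in the unipotent $\ell$-blocks is square and unitriangular. Granting this, every simple $\oFl$-rep $\bar\pi$ with $\bu_{\bar G}\bar\pi\neq 0$ arises as a composition factor of the mod-$\ell$ reduction $r_\ell(\sigma)$ of some unipotent $\oQl$-character $\sigma$. Since $\sigma$ is a subquotient of $\ind{\bar{B}}{\bar G}{\oQl}$, the inequality $[r_\ell(\sigma)]\leq[\ind{\bar{B}}{\bar G}{\oFl}]$ holds coefficientwise in the Grothendieck group $\RC(\bar G,\oFl)$, so $\bar\pi$ is a composition factor of $\ind{\bar{B}}{\bar G}{\oFl}$, as desired.

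The only non-formal step is the basic set property, which I would import from the modular representation theory of finite general linear groups rather than reprove here. Everything else---identifying $\ind{\bar{B}}{\bar G}{\oQl}$ as a sum of unipotent characters, transporting the central idempotent $\bu_{\bar G}$ through the integral lattice, and comparing composition factors before and after reduction modulo $\ell$---is routine bookkeeping.
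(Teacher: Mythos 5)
The proof is correct. The forward implication is essentially the paper's: every $\oQl$-constituent of $\ind{\bar{B}}{\bar G}{\oQl}$ is unipotent, so $\bu_{\bar G}$ acts as the identity on the lattice $\ind{\bar B}{\bar G}{\oZl}$ and hence on its reduction (the paper phrases this as ``occurs in the reduction of a unipotent irreducible,'' but the substance is the same). Where you diverge is the reverse implication. The paper reduces to the cuspidal case via Lemma~\ref{lemfinite} and then quotes the Dipper--James classification of cuspidal modular representations directly: a cuspidal $\bar\pi$ in the unipotent blocks is of the form $D(s,1)$ with $s$ an elliptic semisimple $\ell$-element, which is identified with $D(1,(n))$, the unique non-degenerate subquotient of $\ind{\bar B}{\bar G}{\oFl}$. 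You instead invoke the basic set theorem for the union of unipotent $\ell$-blocks of $\GL_n(\FM_q)$ (Dipper--James, in Geck--Hiss's framework): every simple $\oFl$-module $\bar\pi$ there occurs in $r_\ell(\sigma)$ for some unipotent $\oQl$-character $\sigma$, and for $\GL_n$ \emph{every} unipotent character is a constituent of $\ind{\bar B}{\bar G}{\oQl}$, whence the conclusion by comparing lattices. Both routes rest on Dipper--James theory; yours avoids the parabolic reduction and the specific cuspidal classification at the cost of citing the stronger basic set statement, while the paper's sticks to ingredients it already has at hand (Lemma~\ref{lemfinite}) and a narrower input. One point worth flagging in your write-up: the step identifying the unipotent characters as exactly the constituents of the unipotent principal series is special to $\GL_n$ (where all unipotent characters have trivial cuspidal support); you tacitly use this, and it is correct here.
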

\begin{proof} Any subquotient of $\ind{\bar{B}}{\bar G}{\oFl}$
  occurs in the reduction of a unipotent irreducible $\oQl$-representation, hence
  belongs to the category cut out by  $\bu_{\bar G}$.
Conversely, fix $\bar\pi$ such that $\bu_{\bar G}\bar\pi\neq 0$. We may assume that $\bar\pi$ is cuspidal, since for
  $\bar{P}=\bar M\bar U$ a parabolic subgroup such that $\bar\pi_{\bar
    U}\neq 0$ we also have $\bu_{\bar M}(\bar\pi_{\bar U})\neq 0$ (as
  in the previous proof). But
  then
  in terms of the Dipper-James classification, $\bar\pi$ is of the
  form $D(s,1)$ for some elliptic semi-simple $\ell$-element of
  $\bar{G}^{*}=\bar{G}$, see \cite[Coro 5.23]{Dipper}. Thus, in terms
  of the James-Dipper classification, it is also of the form
  $D(1,(n))$, see \cite[Thm 5.1]{DJ}, which means that $\bar\pi$ is
  the only non-degenerate subquotient in $\ind{B}{G}{\oFl}$.
\end{proof}

\alin{Construction of the block}
Here we put $\bar{G}=\GL_{d}(\FM_{q})$.
We may  view $\mathbf{b}_{\bar{G}}$ as a central idempotent of the
$\Zl$-algebra $\HC_{\Zl}(\GL_{d}(\OC))$ 
of locally constant distributions on $\GL_{d}(\OC)$.
Then we put
$$P_{\mathbf{b}}:=\cind{\GL_{d}(\OC)}{G}{\mathbf{b}_{\bar G}\HC_{\Zl}(\GL_{d}(\OC))}$$
and we define
$\Mo{\mathbf{b}}{G}$ as the full subcategory of $\Mo{\Zl}{G}$
consisting of all objects $V$ that are generated by $\mathbf{b}_{\bar G} V$ over $\Zl G$.

We will use similar notation to denote somewhat more familiar objects
; letting $\eu_{\bar G}$ be the idempotent attached to the pro-$p$-radical
of $\GL_{d}(\OC)$, we also put
$$P_{\eu}:=\cind{\GL_{d}(\OC)}{G}{\mathbf{e}_{\bar G}\HC_{Zl}(\GL_{d}(\OC))}$$
and we define the category $\Mo{\eu}{G}$ as above. We recall the
following result, which is a special case
of ``level decomposition'', see e.g.  \cite[App. A]{finitude}.

\begin{fac}
  The category $\Mo{\mathbf{e}}{G}$ is a direct factor of
  $\Mo{\Zl}{G}$ and is pro-generated by $P_{\mathbf{e}}$. In
  particular, there is an idempotent $\eu$ of the center of the
  category $\Mo{\Zl}{G}$ such that for any object $V$ we have $\eu
  V=\sum_{g\in G/\GL_{d}(\OC)} g \eu_{\bar G} V$.
\end{fac}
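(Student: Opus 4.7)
The plan is to recognize the statement as the ``depth-zero'' case of level decomposition. Write $I \subset \GL_{d}(\OC)$ for the pro-$p$-radical, i.e., the kernel of the reduction map to $\GL_{d}(k)$. Since $I$ is normal in $\GL_{d}(\OC)$, the idempotent $\eu_{\bar{G}} = \vol(I)^{-1}\mathbf{1}_{I}$ is central in $\HC_{\Zl}(\GL_{d}(\OC))$, and for any smooth representation $V$ one has $\eu_{\bar{G}} V = V^{I}$. Because $I$ is a pro-$p$ group and $\ell \neq p$, the averaging projector onto $V^{I}$ has denominator a unit in $\Zl$, so the functor $V \mapsto V^{I}$ is exact on $\Mo{\Zl}{G}$.

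First I would verify projectivity and generation. By Frobenius reciprocity,
$$ \Hom_{G}(P_{\eu}, V) = \Hom_{\GL_{d}(\OC)}(\eu_{\bar{G}} \HC_{\Zl}(\GL_{d}(\OC)), V) = \eu_{\bar{G}} V = V^{I}, $$
an exact functor of $V$, so $P_{\eu}$ is projective. Any $V \in \Mo{\eu}{G}$ is by definition generated over $\Zl G$ by $V^{I}$, hence is a quotient of a direct sum of copies of $P_{\eu}$; thus $P_{\eu}$ pro-generates $\Mo{\eu}{G}$.

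The hard part is the direct-factor claim. I would introduce a candidate complement as the full subcategory of $V \in \Mo{\Zl}{G}$ with $V^{I} = 0$; exactness of $(-)^{I}$ makes it closed under sub-, quotients and extensions, and obviously orthogonal to $\Mo{\eu}{G}$. To decompose a general $V$, set $V_{1} := \sum_{g \in G/\GL_{d}(\OC)} g V^{I}$, the $G$-submodule generated by $V^{I}$, which lies in $\Mo{\eu}{G}$; but the quotient $V/V_{1}$ need not have vanishing $I$-invariants, so a $G$-stable supplement must be produced by other means. This is the main obstacle, and I would resolve it by invoking Bushnell--Kutzko / Moy--Prasad depth-zero type theory for $\GL_{d}(K)$: the pair $(\GL_{d}(\OC), \Zl[\GL_{d}(k)])$ is a cover of depth zero whose spherical Hecke algebra controls $\Mo{\eu}{G}$, while the orthogonal complement is a sum of positive-depth Bernstein components, each also cut out by a central idempotent; the sum of all depth-zero central idempotents is the desired $\eu$. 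Once $\eu$ is constructed as an element of the center of $\Mo{\Zl}{G}$, the formula $\eu V = \sum_{g \in G/\GL_{d}(\OC)} g \eu_{\bar{G}} V$ is verified by observing that both sides are $G$-stable subspaces of $V$ containing $V^{I}$, generate the same subrepresentation, and kill the complement.
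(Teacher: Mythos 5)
Your overall plan matches the paper's, which simply cites this Fact as a special case of level/depth decomposition (referring to \cite[App.\ A]{finitude}); there is no proof to compare against in the paper itself, so the question is only whether your sketch is sound. The projectivity and pro-generation argument is fine.

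There is, however, a concrete error in the paragraph where you identify the ``main obstacle.'' You claim that for $V_{1}:=\sum_{g}gV^{I}$ the quotient $V/V_{1}$ ``need not have vanishing $I$-invariants.'' In fact it always does, precisely because $(-)^{I}$ is exact: since $V^{I}\subseteq V_{1}$ and $V^{I}$ is $I$-fixed, one has $V_{1}^{I}=V^{I}$, and then exactness of $(-)^{I}$ applied to $0\to V_{1}\to V\to V/V_{1}\to 0$ gives $(V/V_{1})^{I}=V^{I}/V_{1}^{I}=0$. So every $V$ automatically sits in an exact sequence with sub in $\Mo{\eu}{G}$ and quotient in the ``complement.'' The genuine difficulty you are circling around is different and more serious: having such a two-step filtration, together with the vanishing of $\Hom$ in both directions (which you do establish), does \emph{not} by itself yield a direct sum decomposition of the category; one still needs the relevant $\Ext^{1}$ between the two Serre subcategories to vanish. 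That is exactly the content that the building-theoretic arguments (Meyer--Solleveld, following Bernstein/Bushnell--Kutzko/Moy--Prasad) supply, and invoking them as you do in the final paragraph is the right resolution. So your endpoint is correct, but the stated reason for why the elementary argument fails should be replaced: it is the failure of automatic splitting, not a failure of the quotient to lie in the orthogonal subcategory.
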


Now we can state the main result of this subsection.

\begin{pro}
  The category $\Mo{\mathbf{b}}{G}$ is a direct factor of
  $\Mo{\eu}{G}$ and is pro-generated by $P_{\mathbf{b}}$. It consists of all objects $V$, all irreducible subquotients
  of which are not annihilated by $\bu_{\bar G}$.
\end{pro}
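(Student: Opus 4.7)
The plan is to realise $\bu_{\bar{G}}$ as a central idempotent of the Hecke algebra pro-generating $\Mo{\eu}{G}$, and then to read off both the direct-factor structure and the characterisation by irreducible subquotients.

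First, by the level-zero decomposition recalled just above, $\Mo{\eu}{G}$ is equivalent to the category of non-degenerate right modules over $\hbox{End}_{G}(P_{\eu})^{\opp}$. Frobenius reciprocity identifies this ring with the Hecke algebra $\mathcal{H}:=\eu_{\bar{G}}\HC_{\Zl}(G)\eu_{\bar{G}}$ of compactly supported $\oZl$-valued distributions on $G$ which are bi-invariant under the pro-$p$-radical $U_{0}\subset\GL_{d}(\OC)$. Inside $\mathcal{H}$ sits the subalgebra $\eu_{\bar{G}}\HC_{\Zl}(\GL_{d}(\OC))\eu_{\bar{G}}\simeq\oZl[\bar{G}]$, and $\bu_{\bar{G}}$ is an idempotent of this subalgebra.

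The substantive step is to show that $\bu_{\bar{G}}$ is central in the whole of $\mathcal{H}$. It is already central in $\oZl[\bar{G}]$ by construction, so what has to be checked is that it commutes with the characteristic function of each double coset $U_{0}gU_{0}$ with $g\notin\GL_{d}(\OC)$. Via the Cartan decomposition $G=\bigsqcup_{\lambda}\GL_{d}(\OC)\varpi^{\lambda}\GL_{d}(\OC)$ and the Iwasawa-type analysis of the intersection $\GL_{d}(\OC)\cap g\GL_{d}(\OC)g^{-1}$ modulo $U_{0}$, each such commutation reduces to an identity of the shape $e_{\bar{U}}\bu_{\bar{G}}=\bu_{\bar{M}}e_{\bar{U}}$ for the standard parabolic subgroup $\bar{P}=\bar{M}\bar{U}$ of $\bar{G}$ cut out by $g$, and this is exactly the content of the preceding Lemma.

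Once centrality is established, the orthogonal decomposition $1=\bu_{\bar{G}}+(1-\bu_{\bar{G}})$ in the centre of $\mathcal{H}$ yields a splitting $P_{\eu}\simeq P_{\bu}\oplus (1-\bu_{\bar{G}})\!\cdot\!P_{\eu}$ in $\Mo{\eu}{G}$ (observing that $P_{\bu}=\bu_{\bar{G}}\!\cdot\!P_{\eu}$), hence a decomposition of $\Mo{\eu}{G}$ itself into two Serre direct factors; we define $\Mo{\bu}{G}$ to be the summand pro-generated by $P_{\bu}$.

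Finally, since $\Mo{\bu}{G}$ is a direct factor of an abelian category, membership in it is stable under sub-objects, quotients and extensions, and is therefore determined by what happens on irreducibles. An irreducible $\pi\in\Mo{\eu}{G}$ lies in $\Mo{\bu}{G}$ if and only if $\bu_{\bar{G}}\pi^{U_{0}}\neq 0$, equivalently $\bu_{\bar{G}}\pi\neq 0$; a general $V\in\Mo{\eu}{G}$ thus lies in $\Mo{\bu}{G}$ exactly when each of its irreducible subquotients is non-annihilated by $\bu_{\bar{G}}$, as stated. The main obstacle is the centrality assertion: the finite-group Lemma supplies all the local building blocks, but stitching them together through the double-coset structure of $G$ is where the actual work lies.
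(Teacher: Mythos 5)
There is a genuine gap, and it sits exactly where you yourself point: the centrality of $\bu_{\bar G}$ in the depth-zero Hecke algebra $\mathcal H$ is asserted, not proved. You say that conjugating $\bu_{\bar G}$ past the characteristic function of a double coset $U_0 g U_0$ ``reduces, via the Cartan decomposition and an Iwasawa-type analysis, to the single-parabolic identity $e_{\bar U}\bu_{\bar G}=\bu_{\bar M}e_{\bar U}$.'' But that reduction is the whole ballgame, and it is not a one-step affair: the convolution $\bu_{\bar G}\ast[U_0 g U_0]$ and $[U_0 g U_0]\ast\bu_{\bar G}$ are a priori supported on the different sets $Kg U_0$ and $U_0 g K$ (with $K=\GL_d(\OC)$), and to show they coincide one has to control what happens along the entire geodesic between the vertices $[K]$ and $g[K]$ of the building, not just at the single parabolic of $\bar G$ cut out by $g$. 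For $g$ far from $K$ this involves compositions of the finite-group identity across several intermediate parahorics, and the bookkeeping of which parabolics appear and how the idempotents propagate is precisely what needs to be organised. The finite Lemma supplies the local input at each step, but the global coherence is not a formal consequence of it.

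This is exactly what the paper's proof is designed to handle, and in a structurally different way. The paper does not attempt to exhibit a central idempotent of $\mathcal H$. Instead it attaches an idempotent $\bu_x$ to every vertex $x$ of the building (by inflation of the unipotent block idempotent of the finite reductive quotient at $x$) and verifies the two Meyer--Solleveld coherence conditions: $\bu_x\bu_y=\bu_y\bu_x$ for adjacent $x,y$, and $\bu_x\bu_z\bu_y=\bu_x\bu_y$ when $z$ is adjacent to $x$ and lies in the convex hull of $\{x,y\}$. Both are verified locally by exactly the finite-group Lemma you invoke, and \cite[Thm 3.1]{MS1} then does the global stitching and produces a Serre subcategory. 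Note this gives only a Serre subcategory, which is weaker than what your centrality claim would deliver; the paper then has to do additional work, running the same argument with the complementary system $\bu'_x=\eu_x-\bu_x$, to upgrade ``Serre subcategory'' to ``direct factor.'' In other words, the paper deliberately avoids the direct centrality route, and the extra step it takes is a clue that centrality is not free. If you want to push your approach through, you would either have to reprove the relevant part of Meyer--Solleveld by hand via explicit double-coset computations, or else show independently that the central idempotent of $\mathcal H$ cutting out the block (which exists once the direct-factor statement is known) actually restricts to $\bu_{\bar G}$ on the finite-group subalgebra — and neither is done here.
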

\begin{proof}
  From its definition, $P_{\mathbf{b}}$ clearly is a generator of the category
  $\Mo{\mathbf{b}}{G}$, and is a finitely generated projective object
  of $\Mo{\oZl}{G}$. 

Let us prove that $\Mo{\mathbf{b}}{G}$ is a Serre subcategory. For
this, we will apply the general result of \cite[Thm 3.1]{MS1}. 
For any vertex $x$ of the semisimple building of $G$, we denote by
$G_{x}$ its stabilizer, $G_{x}^{+}$ the pro-$p$-radical of its
stabilizer and $\bar{G}_{x}:=G_{x}/G_{x}^{+}$ the reductive
quotient. We thus get an idempotent $\bu_{x}\in \HC_{\oZl}(G_{x})$ by
inflation from $\bu_{\bar{G_{x}}}$. If $g\in G$, we clearly have $\bu_{gx}=g\bu_{x}g^{-1}$.
Therefore, to apply \cite[Thm 3.1]{MS1} we are left to check the two following
properties (\emph{cf} \cite[Def 2.1]{MS1}):
\begin{enumerate}
\item $\bu_{x}\bu_{y}=\bu_{y}\bu_{x}$ for any adjacent vertices $x,y$.
\item $\bu_{x}\bu_{z}\bu_{y}=\bu_{x}\bu_{y}$ whenever $z$ is
  adjacent to $x$ and belongs to
  the convex simplicial hull of $\{x,y\}$.
\end{enumerate}
Note first  that the definition of $\bu_{x}$ extends to any
facet $F$ of the building. 
Further, let $\mathbf{e}_{F}:=e_{G_{F}^{+}}$ denote
the idempotent associated to the pro-$p$-group $G_{F}^{+}$. We know
that properties i) and ii) are satisfied by the system
$(\eu_{x})_{x}$, and more precisely we have
$\eu_{x}\eu_{y}=\eu_{[x,y]}$ whenever $x$ and $y$ are adjacent vertices.
Therefore, the above lemma shows that
$\bu_{x}\mathbf{e}_{y}=\bu_{x}\mathbf{e}_{[x,y]}=\bu_{[x,y]}$
and thus $\bu_{x}\bu_{y}=\bu_{[x,y]}=\bu_{y}\bu_{x}$.
As for property ii),  starting from
$\eu_{x}\eu_{y}=\eu_{x}\eu_{z}\eu_{y}$, we get
$\bu_{x}\bu_{y}=\bu_{x}\eu_{z}\bu_{y}= \bu_{x}\bu_{z}\bu_{y}$, as desired.

We now know that $\Mo{\bu}{G}$ is a Serre subcategory of the 
Serre subcategory $\Mo{\eu}{G}$ cut out by the system
$(\eu_{x})_{x}$. For a vertex $x$, define
$\bu'_{x}:=\eu_{x}-\bu_{x}$, which is lifted from the idempotent
$1-\bu_{\bar{G_{x}}}$ of $\oZl[\bar{G_{x}}]$. The same argument as above shows that the
system $(\bu'_{x})_{x}$ satisfies properties i) and ii) and therefore
cuts out a Serre subcategory of $\Mo{\eu}{G}$, which is easily seen
to be a complement to $\Mo{\bu}{G}$. Therefore the latter is a
direct factor in $\Mo{\eu}{G}$.
The last statement
of the proposition is clear.
\end{proof}

\begin{no}
  We will denote by $\bu$ the idempotent of the center of the category
  $\Mo{\Zl}{G}$ which projects a representation $V$ to its largest subobject
  $\bu V$ in $\Mo{\bu}{G}$. Concretely, we have  $\bu V = \sum_{g\in
    \GL_{d}(K)/\GL_{d}(\OC)} g.\bu_{\bar G}V$.
\end{no}


\begin{prop}\label{irredunipbloc}
  A representation $\pi\in\Irr{\oFl}{G}$ belongs to $\Mo{\bu}{G}$ if
  and only if it is an irreducible subquotient of some
  $\ind{B}{G}{\chi}$ with $\chi$ an unramified character of $B$.
In particular, $\Mo{\bu}{G}\cap\Mo{\oFl}{G}$ is Vign\'eras' unipotent
block \cite[IV.6.3]{Viginduced}.
\end{prop}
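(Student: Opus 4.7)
The plan is to prove the two implications separately, and then deduce the identification with Vignéras's unipotent block as a direct corollary.

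For the ``if'' direction, I would show that $\ind_B^G\chi\in\Mo{\bu}{G}$ for any unramified $\chi$; the conclusion for arbitrary irreducible subquotients then follows because $\Mo{\bu}{G}$ is a Serre subcategory (previous proposition). The Iwasawa decomposition $G=B\cdot G_x$ with $G_x=\GL_d(\OC)$, combined with the triviality of $\chi$ on $B(\OC)$, yields $(\ind_B^G\chi)|_{G_x}\simeq\ind_{B(\OC)}^{G_x}\oFl$; computing $G_x^+$-invariants via the bijection $B(\OC)\backslash G_x/G_x^+\simeq\bar B\backslash\bar G$ gives $(\ind_B^G\chi)^{G_x^+}\simeq\ind_{\bar B}^{\bar G}\oFl$, which by Fact~\ref{facfinite} lies entirely in the block cut out by $\bu_{\bar G}$. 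Hence $\bu_{\bar G}(\ind_B^G\chi)^{G_x^+}\neq 0$, and we are done.

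For the converse direction, let $\pi$ be irreducible in $\Mo{\bu}{G}$; by the previous proposition this is equivalent to $\bar V:=\bu_{\bar G}\pi^{G_x^+}\neq 0$ (so in particular $\pi$ has depth zero). The space $\bar V$ is a nonzero $\bar G$-representation in the unipotent block of $\bar G=\GL_d(\FM_q)$, so by Fact~\ref{facfinite} every irreducible constituent of $\bar V$ is a subquotient of $\ind_{\bar B}^{\bar G}\oFl$. Since $|\bar U|$ is coprime to $\ell$, the functor of $\bar U$-invariants is exact, and an irreducible $\bar G$-representation has nonzero $\bar U$-invariants precisely when it is non-cuspidal. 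Thus, if $\bar V$ admits any non-cuspidal irreducible constituent, then $\bar V^{\bar U}\neq 0$, which translates into $\pi^{I_1}\neq 0$ for the pro-$p$ Iwahori $I_1\subset G_x$ (the preimage of $\bar U$ in $G_x$), and the Vignéras analogue of Borel's theorem then gives that $\pi$ is a subquotient of $\ind_B^G\chi$ for some unramified $\chi$.

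The main obstacle is the remaining case, where $\bar V$ consists only of cuspidal irreducible constituents in the unipotent block of $\bar G$. By Fact~\ref{facfinite} together with the Dipper-James classification, these constituents must all be isomorphic to $D(1,(d))$, a situation that only arises under specific congruences on $(q,\ell)$. To conclude in this case I would invoke the classification of depth-zero irreducibles: $\pi$ is then a constituent of a compact induction from $G_x$ of a lift of $D(1,(d))$, and combining Lemma~\ref{lemfinite} applied to Levi subgroups (to propagate unipotent-block membership through parabolic induction) with the fact that $D(1,(d))$ is itself a subquotient of $\ind_{\bar B}^{\bar G}\oFl$, one shows that the Vignéras supercuspidal support of $\pi$ is inertially equivalent to $(T,1)$, so $\pi$ is still a subquotient of $\ind_B^G\chi$ for some unramified $\chi$. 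The last statement of the proposition is then immediate: Vignéras's unipotent block is by definition the block of the trivial representation, and its irreducible objects are characterized precisely as subquotients of $\ind_B^G\chi$ for unramified $\chi$, matching exactly the description of irreducibles in $\Mo{\bu}{G}\cap\Mo{\oFl}{G}$ established above.
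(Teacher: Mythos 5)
Your ``if'' direction is essentially the paper's: both use the Mackey/Iwasawa identification $(\ind_B^G\chi)^{G_x^+}\simeq\ind_{\bar B}^{\bar G}\oFl$ and Fact~\ref{facfinite}. For the converse, however, the paper does not perform a case analysis at the hyperspecial vertex; it goes straight to the supercuspidal support. Namely, it writes $\pi$ as a subquotient of $\ind_P^G\sigma$ with $\sigma$ supercuspidal, applies Mackey and Lemma~\ref{lemfinite} to get $\bu_{\bar M}\sigma\neq 0$, then uses Vign\'eras's classification of depth-zero supercuspidals (\cite[3.14--3.15]{Vig}) to force the associated elliptic semisimple class $s$ to be both an $\ell'$-element (supercuspidality) and an $\ell$-element (from $\bu_{\bar M}\neq 0$), hence $s=1$, $M=T$ and $\sigma$ unramified. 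This works uniformly, with no case split.

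Your converse, by contrast, has two genuine gaps. In the first case you claim that $\pi^{I_1}\neq 0$ (pro-$p$ Iwahori) and a ``Vign\'eras analogue of Borel's theorem'' imply that $\pi$ is a subquotient of $\ind_B^G\chi$ with $\chi$ \emph{unramified}. But $\pi^{I_1}\neq 0$ only places $\pi$ in the level-zero/pro-$p$-Iwahori category and, at best, realizes it inside $\ind_B^G\chi$ for some \emph{tamely ramified} $\chi$ (trivial on $T\cap I_1$, not on all of $T(\OC)$). What you actually need is $\pi^I\neq 0$ for the full Iwahori $I$, and for that you must further observe that, because $\bar V$ lies in the unipotent block of $\bar G$, the $\bar T$-action on $\bar V^{\bar U}$ is trivial. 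That observation is missing, so even this case is incomplete as written. The second case (all constituents of $\bar V$ cuspidal) is not a proof at all: the assertion that Lemma~\ref{lemfinite} ``applied to Levi subgroups'' combined with lifting lets one pin down the supercuspidal support as $(T,1)$ is exactly what requires an argument, and spelling it out leads you back to the paper's supercuspidal-support computation, which already handles both cases at once. In short, the case distinction is unnecessary overhead, and the hardest case is left unproved; replacing the whole converse by the paper's argument via supercuspidal support and \cite[3.14--3.15]{Vig} would close both gaps.
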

\begin{proof}
Let $\eu_{\bar G}\in\HC_{\oZl}(\GL_{d}(\OC))$ be the 
  idempotent associated to the kernel of the reduction map
  $\GL_{d}(\OC)\To{}\GL_{d}(\FM_{q})$.
  By Mackey formula, the residual representation of $\bar{G}$ on
  $\eu_{\bar G}\ind{B}{G}{\chi}$ is isomorphic to
  $\ind{\bar{B}}{\bar{G}}{\oFl}$ with obvious notation. Since 
$\ind{B}{G}{\chi}$ belongs to the level zero subcategory $\Mo{\eu}{G}$, so does each one of its irreducible
subquotients. Hence for such a subquotient $\pi$,  $\eu_{\bar G}\pi$ is a
non-zero subquotient of $\ind{\bar{B}}{\bar{G}}{\oFl}$, so
that $\bu_{\bar G}\pi\neq 0$ and $\pi$ belongs to $\Mo{\bu}{G}$.

Conversely, let $\pi$ be an irreducible $\oFl$-representation such
that $\bu\pi\neq 0$. Choose a parabolic subgroup $P=MU$ and a
supercuspidal $\oFl$-representation $\sigma$ of $M$ such that $\pi$ occurs as a
subquotient of $\ind{P}{G}{\sigma}$. 
As above,  Mackey formula tells us that 
$\eu_{\bar G}\ind{P}{G}{\sigma}\simeq \ind{\bar P}{\bar G}{\eu_{\bar
    M}\sigma}$, 
with  obvious notation.
So by Lemma \ref{lemfinite} we get
$\bu_{\bar G}\ind{P}{G}{\sigma}\simeq \ind{\bar P}{\bar G}{\bu_{\bar
    M}\sigma}$ and finally $\bu_{\bar M}\sigma\neq 0$.
By \cite[3.15]{Vig},  we know that $\sigma$ is of the form
$\cind{M\cap\GL_{n}(\OC)}{M}{\bar\sigma}$ for some \emph{super}cuspidal
$\oFl$-representation $\bar\sigma$ of the Levi subgroup $\bar M$ of
$\bar G$, image of $M\cap \GL_{n}(\OC)$ by the projection to $\bar
G$. Here, supercuspidal is equivalent to the fact that the semisimple
elliptic class $s$ associated to $\bar\sigma$ consists of $\ell'$-elements.
However, an easy computation \cite[3.14]{Vig} shows that $\bu_{\bar
  M}\bar\sigma=\bu_{\bar M}\sigma$.
Therefore $\bu_{\bar M}\bar\sigma$ is non
zero and $s$ consists of $\ell$-elements by definition. Hence $s=1$,
or equivalently, $M$ is a torus and $\bar\sigma$
the trivial representation of $\bar M$.
\end{proof}

\begin{rak}
  In terms of the Langlands correspondence, the irreducible
  $\oFl$-representa\-tions $\pi$ of the principal/unipotent block are
  those such that $\sigma(\pi)^{\rm ss}$ is a sum of unramified
  characters. This formulation might extend to other $p$-adic groups,
  as suggested by the finite field picture.
\end{rak}


\subsection{The complex}

In the first two paragraphs of this subsection, no congruence hypothesis
on the pair $(q,\ell)$ is required. From paragraph
\ref{descriptioncoho} on, we will work under the Coxeter congruence relation.

\alin{The tower and its cohomoly complexes}
We refer to \cite{StrauchAdv} or \cite[3.1]{lt} for the definition of
the Lubin-Tate space $\mltn$ of height $d$ and level $n$, which we see
as a  $\breve{K}$-analytic space, endowed with a continuous action of
$D^{\times}$, an action of $\GL_{d}(\OC/\varpi^{n}\OC)$ and a Weil descent datum
to $K$.  Although in this paper we will be mainly interested in the tame level
$\MC_{\rm LT, 1}$, the formalism used to define the complex requires
the whole ``tower'' $(\mltn)_{n\in \NM}$ and in particular the action
of $G=\GL_{d}(K)$ which can be defined on this tower. Maybe the most precise way to
describe this action is to introduce the category $\NM(G)$ with
set of objects $\NM$ and arrows given by $\Hom(n,m):=\{g\in G, g
M_{d}(\OC) g^{-1}\subset \varpi^{m-n}M_{d}(\OC)\} $ and to note that
the $\mltn$'s are the image of a functor from $\NM(G)$ to the category
whose objects are  $\breve{K}$-analytic spaces with continuous action of $D^{\times}$
and Weil descent datum to $K$, and morphisms are finite \'etale
equivariant morphisms. This allows one to define the complex
$$ R\Gamma_{c}:= R\Gamma_{c}(\mlt^{\rm ca},\Zl) \in 
D^{b}(\Rep^{\infty,
  c}_{\Zl}(G\times D^{\times}\times W_{K}))$$
as in \cite[3.3.3]{lt}. Let us note that the diagonal subgroup
$K^{\times}$ of $G\times D^{\times}$ acts trivially on the tower hence
also on the cohomology.

It is technically important to recall that the tower is ``induced'' from
a ``sub-tower'' denoted $(\mltn^{(0)})_{n\in \NM}$ which is stable
under the subgroup 
$$ (GDW)^{0}:= \left\{(g,\delta,w)\in G\times D^{\times}\times W_{K},
|\det(g)|^{-1}|{\rm Nr}(\delta)||{\rm Art}_{K}(w)|=1\right\}.$$
So we have a complex 
$ R\Gamma_{c}^{(0)}:=R\Gamma_{c}(\mlt^{\rm ca,(0)},\Zl) \in 
D^{b}(\Rep^{\infty,  c}_{\Zl}(GDW)^{0})$
together with an isomorphism \cite[(3.5.2)]{lt}
$$ R\Gamma_{c}\simeq \cInd{(GDW)^{0}}{GDW}
 R\Gamma_{c}^{(0)}.$$
An important consequence of this is the following compatibility with
twisting. For any smooth character $\chi$ of $K^{\times}$ and  any
representation $\pi$ of $G$, we have
\begin{equation}
 R_{(\chi\circ\det)\otimes\pi} \simeq
(\chi\circ ({\rm Nr}\cdot {\rm Art}_{K}))\otimes R_{\pi} 
\,\,\,\hbox{ in } \,D^{b}(\Rep_{\Zl}(D^{\times}\times W_{K})).
\label{twist}
\end{equation}

We need yet another variant. Let us fix a uniformizer $\varpi$ of
$K$ and see it as a central element of $G$. Its action on the tower is
free (it permutes the connected components), so we may consider the
quotient tower $(\mltn/\varpi^{\ZM})_{n\in\NM}$ and its cohomology
complex 
$$R\Gamma_{c,\varpi}:= R\Gamma_{c}(\mlt^{\rm ca}/\varpi^{\ZM},\Zl) \in 
D^{b}(\Rep^{\infty,
  c}_{\Zl}(G/\varpi^{\ZM}\times D^{\times} \times W)).$$ We then have
isomorphisms (\emph{cf} \cite[3.5.3]{lt})
$$R\Gamma_{c,\varpi} \simeq 
R\Gamma_{c} \otimes^{L}_{\Zl[\varpi^{\ZM}]}\Zl \simeq
\cInd{(GDW)^{0}\varpi^{\ZM}}{GDW}
 R\Gamma_{c}^{(0)}.$$
Because of the first isomorphism, if $\pi$ is a representation on which $\varpi$ acts
trivially, then
$R_{\pi}\simeq\Rhom_{G/\varpi^{\ZM}}(R\Gamma_{c,\varpi},\pi)$. Since
any irreducible representation may be twisted to achieve this condition
$\pi(\varpi)=1$, we see that we don't loose any generality in
restricting attention to $R\Gamma_{c,\varpi}$.

\alin{The tame part}
We take up the notation $\eu$, $\eu_{\bar G}$ of the previous
subsection and denote by $\HC_{\eu}$ the commuting algebra $\endo{\Zl
  G}{P_{\eu}}$, which  identifies
with the Hecke algebra of compactly
supported $\Zl$-valued  $(1+\varpi M_{d}(\OC))$-bi-invariant measures
on $G$.

The complex $\eu_{\bar G}R\Gamma_{c}(\mlt^{\rm ca},\Zl)$ is naturally
an object of $D^{b}(\Rep_{\HC_{\eu}}^{\infty}(D^{\times}\times
W_{K}))$ and we recover the direct summand $\eu R\Gamma_{c}(\mlt^{\rm
  ca},\Zl)$ via the usual equivalence of categories. Namely we have,
as in \cite[Lemme 3.5.9]{lt},
$$ \eu R\Gamma_{c}(\mlt^{\rm ca},\Zl) \simeq  P_{\eu}
\otimes^{L}_{\HC_{\eu}} \eu_{\bar G}R\Gamma_{c}(\mlt^{\rm ca},\Zl).$$
Now, if we restrict the action to $\GL_{d}(\OC)$, we have
by construction an isomorphism in $D^{b}(\Rep^{\infty,c}_{\Zl}(\bar G
\times D^{\times}\times W_{K}))$.
$$ \eu_{\bar G}R\Gamma_{c}(\mlt^{\rm ca},\Zl) \simto
R\Gamma_{c}(\MC_{\rm LT, 1}^{\rm ca},\Zl) \simeq \cInd{\bar G \times
  (DW)^{0}}{\bar G\times D^{\times}\times W_{K}} R\Gamma_{c}(\MC_{\rm
  LT, 1}^{\rm ca, (0)},\Zl).$$

The tame Lubin-Tate space $\MC_{\rm LT,1}^{(0)}$ was studied by Yoshida in
\cite{yoshida}. He exibited in particular a certain affinoid subset
$\NC$ of $\MC_{\rm LT,1}^{{0}}$ which acquires good reduction over
$\breve{K}[\varpi^{1/(q^{d}-1)}]$, with special fiber equivariantly
isomorphic to the Deligne-Lusztig covering $Y(c)$ associated to the Coxeter
element of $\bar G$.
In \cite{tame}, we showed that the restriction map induces an
isomorphism $R\Gamma(\MC_{\rm  LT, 1}^{\rm ca, (0)},\Zl)\simto
R\Gamma(\NC^{\rm ca},\Zl)$. Taking duals, we thus get an
isomorphism in $D^{b}(\Rep_{\Zl}(\bar G))$
\begin{equation}
R\Gamma_{c}(Y(c),\Zl)\simto R\Gamma_{c}(\MC_{\rm LT, 1}^{\rm ca, (0)},\Zl).\label{isomDL}
\end{equation}
In particular we get the following important property.
\begin{pro}\label{torsionfree}
  The cohomology spaces of both the complexes $\eu R\Gamma_{c}$ and
  $\eu R\Gamma_{c,\varpi}$ are torsion-free. 
\end{pro}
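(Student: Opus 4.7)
The plan is to trace torsion-freeness through the chain of isomorphisms already established in this subsection, reducing the statement for both complexes to torsion-freeness of $H^{*}_{c}(Y(c),\Zl)$ for the Coxeter Deligne--Lusztig variety of $\GL_{d}(\FM_{q})$. Combining (\ref{isomDL}) with the preceding induction formula for $\eu_{\bar G}R\Gamma_{c}$ gives an isomorphism
$$ \eu_{\bar G}R\Gamma_{c} \simeq \cInd{\bar G\times (DW)^{0}}{\bar G\times D^{\times}\times W_{K}} R\Gamma_{c}(Y(c),\Zl). $$
Compact induction is exact and, as a functor to $\Zl$-modules, is just a direct sum of copies indexed by cosets, hence preserves torsion-freeness. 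Thus the torsion-freeness of the cohomology of $\eu_{\bar G}R\Gamma_{c}$ would follow from that of $H^{*}_{c}(Y(c),\Zl)$.

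To extend from $\eu_{\bar G}$ to $\eu$, I would invoke the concrete level-zero formula $\eu V=\sum_{g\in G/\GL_{d}(\OC)} g\cdot \eu_{\bar G} V$ recalled in the previous subsection, which realizes $\eu R\Gamma_{c}$ as a direct sum of $G$-translates of $\eu_{\bar G}R\Gamma_{c}$ at the level of underlying $\Zl$-modules; torsion-freeness is then preserved. The parallel argument, based on the induction formula $R\Gamma_{c,\varpi}\simeq \cInd{(GDW)^{0}\varpi^{\ZM}}{GDW} R\Gamma_{c}^{(0)}$, handles $\eu R\Gamma_{c,\varpi}$ in exactly the same manner, so both statements reduce to the single input about $Y(c)$.

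The main obstacle is therefore the torsion-freeness of $H^{*}_{c}(Y(c),\Zl)$ itself, which is a classical fact about the Coxeter Deligne--Lusztig variety of $\GL_{d}(\FM_{q})$. It can be obtained from the integral refinement of Lusztig's description of the cohomology of Coxeter Deligne--Lusztig varieties, or more directly from the explicit realization of $Y(c)$ as a smooth affine variety of dimension $d-1$ admitting a well-understood finite \'etale cyclotomic cover, from which the torsion-freeness can be read off by an explicit spectral sequence argument. Some care has to be taken with the reference: at this point in the paper no congruence hypothesis on $(q,\ell)$ has been imposed, so the input invoked must work in full generality.
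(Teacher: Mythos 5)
Your outline follows the same reduction chain as the paper, and the first step (pulling torsion-freeness through compact induction, which at the level of $\Zl$-modules is a direct sum over cosets of the open subgroup $\bar G\times(DW)^{0}$) is fine. But the passage from $\eu_{\bar G}R\Gamma_{c}$ to $\eu R\Gamma_{c}$ is not correct as stated. The level-zero formula $\eu V=\sum_{g\in G/\GL_{d}(\OC)}g\,\eu_{\bar G}V$ is only a \emph{sum}, not a direct sum of $\Zl$-submodules: for $V=\Zl$ the trivial representation, every translate $g\,\eu_{\bar G}V$ equals all of $V=\Zl$, so $\eu V=\Zl$ rather than an infinite direct sum. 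More generally the idempotents $g\,\eu_{\bar G}\,g^{-1}$ attached to different vertices are not mutually orthogonal, so no directness can be expected. Torsion-freeness of each summand therefore does not propagate to the sum by the route you describe.

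The paper sidesteps this by using the isomorphism, recalled right before the Proposition,
$$\eu R\Gamma_{c}\simeq P_{\eu}\otimes^{L}_{\HC_{\eu}}\eu_{\bar G}R\Gamma_{c},$$
where $P_{\eu}\otimes_{\HC_{\eu}}-$ is the inverse of the equivalence $V\mapsto \eu_{\bar G}V$ between $\Mo{\eu}{G}$ and $\HC_{\eu}$-modules. Being an equivalence of abelian categories, $P_{\eu}\otimes_{\HC_{\eu}}-$ is exact, i.e.\ $P_{\eu}$ is flat over $\HC_{\eu}$; applied to the injection $\cdot\ell\colon H^{*}(\eu_{\bar G}R\Gamma_{c})\injo H^{*}(\eu_{\bar G}R\Gamma_{c})$ this shows multiplication by $\ell$ stays injective on $H^{*}(\eu R\Gamma_{c})$. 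That is the step you should use in place of the direct-sum claim. Finally, you correctly locate the ultimate input as the torsion-freeness of $H^{*}_{c}(Y(c),\Zl)$ but leave the source vague; the paper points to Bonnaf\'e--Rouquier (Lemma~3.9 and Corollary~4.3 of \cite{BonRou2}), which indeed requires no hypothesis on $(q,\ell)$, matching the caveat you raise at the end.
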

Indeed, the torsion-freeness for $Y(c)$ follows from Lemma 3.9 and
Corollary 4.3 of \cite{BonRou2}. We emphasize the fact that no
hypothesis on the pair $(q,\ell)$ is required here.

\alin{The unipotent part :  $\ell$-adic cohomology}
From this paragraph on, we assume that the order of $q$ in
$\Fl^{\times}$ is $d$.
We take up the notation of the previous subsection, and we
consider the direct summand $\bu R\Gamma_{c}(\mlt^{\rm
  ca},\Zl)$, or rather its variant $\bu R\Gamma_{c,\varpi}$. 
There is a fairly explicit description of the $\Ql$-cohomology of
this complex. We first recall a classical construction.
  Let $\theta:\,\FM_{q^{d}}^{\times}\To{}\oQl^{\times}$ be a character which is
  ${\rm Frob}_{q}$-regular. Define :
  \begin{itemize}
  \item[--]  a representation
    $\rho(\theta):=\cInd{\OC_{D}^{\times}\varpi^{\ZM}}{D^{\times}}(\theta)$
    of $D^{\times}$, where $\OC_{D}^{\times}\varpi^{\ZM}$ acts via the reduction
    map $\OC_{D}^{\times}\To{}\FM_{q^{d}}^{\times}$.
  \item[--] a representation
    $\sigma(\theta):=\cind{I_{K}\varphi^{d\ZM}}{W_{K}}{\theta}$ where
    $I_{K}\varphi^{d\ZM}$ acts
    via the tame inertia map $I_{K}\To{}\mu_{q^{d}-1}\simeq \FM_{q^{d}-1}$.
  \item[--] a representation
    $\pi(\theta):=\cind{\GL_{d}(\OC)\varpi^{\ZM}}{G}{\pi^{0}_{\theta}}$
where $\pi^{0}_{\theta}$ is the cuspidal representation of
$\GL_{d}(\FM_{q})$ associated to $\theta$ by the Green (or
Deligne-Lusztig) correspondence.
 \end{itemize}
All these representations are irreducible and
 depend only on the ${\rm Frob}_{q}$-conju\-gacy class of $\theta$.
Moreover, they are associate by the Langlands and
Jacquet-Langlands correspondences.

\begin{fac} \label{descriptioncoho}
Let $I_{\varpi}:=\Zl[G\times D^{\times} \times W_{K}/(GDW)^{0}\varpi^{\ZM}]$.
  \begin{enumerate}\item 
    For $i=1,\cdots, d-1$, there is an isomorphism
$$\HC^{d-1+i}(\bu
    R\Gamma_{c,\varpi})\otimes\Ql \simto v_{\{1,\cdots, i\}}(\Ql)(-i) \otimes
    I_{\varpi}.$$

    \item For $i=0$, there is a (split) exact sequence
$$0\To{} \KC\To{} \HC^{d-1}(\bu
    R\Gamma_{c,\varpi})\otimes \Ql \To{} v_{\emptyset}(\Ql) \otimes
    I_{\varpi}\To{} 0$$
and an isomorphism
 $\KC\otimes\oQl\simeq \bigoplus_{\theta}
 \pi(\theta)\otimes\rho(\theta)^{\vee} \otimes \sigma(\theta)^{\vee}$,
 where $\theta$ runs over ${\rm Frob}_{q}$-conjugacy classes of ${\rm
   Frob}_{q}$-regular characters $\FM_{q^{d}}^{\times}\To{}\oZl^{\times}$ which are
 $\ell$-congruent to the trivial character. 
  \end{enumerate}
\end{fac}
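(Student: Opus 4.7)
The plan is to derive this fact from Boyer's explicit computation \cite{Boyer2} of the $\oQl$-cohomology of the entire Lubin--Tate tower; an alternative route is to invoke the Faltings--Fargues isomorphism \cite{FarFal} and use Faltings' description on the Drinfeld side. Boyer decomposes $R\Gamma_c(\mlt^{\rm ca},\oQl)$ according to inertial cuspidal supports and identifies each $G$-isotypic component, in each degree, via the Langlands and Jacquet--Langlands correspondences together with the monodromy filtration on the Weil--Deligne parameter.

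The first step is to identify which irreducible $\oQl$-representations $\pi$ of $G$ survive $\bu$. An $\ell$-integral irreducible $\pi$ lies in $\Mo{\bu}{G}$ iff every constituent of $r_\ell(\pi)$ lies in the unipotent $\oFl$-block, which by Proposition~\ref{irredunipbloc} amounts to $\pi$ being an irreducible subquotient of some $\ind{B}{G}{\chi}$ for an unramified character $\chi$ whose reduction modulo $\ell$ is trivial. Under the Coxeter congruence, this yields exactly two families: the generalized Steinbergs $v_I(\oQl)$ for $I\subseteq S$, whose reductions lie in the unipotent block by Proposition~\ref{decomp}; and the cuspidals $\pi(\theta)$ for ${\rm Frob}_q$-regular characters $\theta\colon \FM_{q^d}^\times\to \oZl^\times$ which are $\ell$-congruent to the trivial character. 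That such regular $\theta$ exist is precisely the Coxeter congruence hypothesis $\ell\mid q^d-1$ together with the primitivity of $q$ modulo $\ell$.

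The second step is to read off Boyer's formula on each family. For the cuspidal family, Boyer (extending Harris--Taylor) yields $\pi(\theta)\otimes\rho(\theta)^\vee\otimes\sigma(\theta)^\vee$ concentrated in the middle degree $d-1$; the compact induction from $(GDW)^0\varpi^\ZM$ up to $GDW$ accounts for the tensor factor $I_\varpi$ and produces the summand $\KC$. For the generalized Steinberg family, Boyer's description places $v_{\{1,\ldots,i\}}(\oQl)$ in degree $d-1+i$ with Tate twist $(-i)$, the $D^\times\times W_K$-structure on this summand being trivial on the $G$-factor and entirely carried by $I_\varpi$. The appearance of the specific subset $I=\{1,\ldots,i\}$ (rather than another subset of cardinality $i$) is dictated by Zelevinski's labeling of the graded pieces of the monodromy filtration on the Steinberg Langlands parameter $\bigoplus_{k=0}^{d-1}\nu_W^k$ equipped with its full Jordan block $N$.

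The main obstacle is the careful bookkeeping: tracking the Tate twists, unramified character shifts, and cohomological degrees through Boyer's spectral sequence built from nearby cycles along the Newton stratification of the Lubin--Tate space. Torsion-freeness (Proposition~\ref{torsionfree}) ensures that the $\oQl$-level picture captures the full integral statement, so no subtlety arises from mod-$\ell$ reduction; the splitting of the exact sequence in (ii) follows from the orthogonality of the cuspidal supports underlying $\KC$ (cuspidal) and $v_\emptyset(\oQl)$ (Steinberg, with principal series cuspidal support).
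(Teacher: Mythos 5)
Your proposal follows the same route as the paper: invoke Boyer's $\oQl$-description of the whole Lubin--Tate tower cohomology (via the account in \cite[4.1.2]{lt}) together with the characterization of the unipotent block from Proposition~\ref{irredunipbloc}, with the Faltings--Fargues route mentioned as an alternative. Your extra remark that the splitting in (ii) follows from the disjointness of cuspidal supports is a correct justification that the paper leaves implicit.
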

\begin{proof}
The shortest argument here is to invoke Boyer's description of the
$\oQl$-cohomology of the whole Lubin-Tate tower in \cite{Boyer2} (see 
\cite[4.1.2]{lt} for an account featuring a notation consistent with that of the present paper),
together with the characterization  of irreducible objects of the
unipotent block in Proposition \ref{irredunipbloc}. We note that the
maps  $\HC^{d-1+i}(\bu R\Gamma_{c,\varpi})\otimes \Ql
\To{}v_{\{1,\cdots,i\}}(\Ql)(-i)$ are induced by the canonical morphism
\begin{equation}
 R\Gamma_{c}(\mlt^{\rm ca,(0)},\Zl)\To{} 
R\Gamma_{c}(\mlt^{\rm ca,(0)},\Zl) \otimes^{L}_{\Zl[\OC_{D}^{\times}]} \Zl.\label{coteLT1}
\end{equation}

Alternatively, if one wants to avoid Boyer's machinery,
it is possible to derive almost everything from Yoshida's construction
\cite{yoshida}, via isomorphism (\ref{isomDL}). More precisely,
put $w_{i}:= \HC^{d-1+i}(\bu R\Gamma_{c,\varpi}\otimes^{L}_{\Zl[\OC_{D}^{\times}/\varpi^{\ZM}]}\Zl)$. 
Then, by using a similar feature of Deligne-Lusztig varieties,  one
can show that the above morphism of complexes induces isomorphisms
$$ \HC^{d-1+i}(\bu R\Gamma_{c,\varpi}) \otimes \Ql \simto
w_{i}\otimes\Ql$$
for $i>0$, as well as an exact sequence
$$ \KC\injo \HC^{d-1}(\bu R\Gamma_{c,\varpi}) \otimes \Ql \twoheadrightarrow
w_{0}\otimes\Ql.$$
Further, one finds an isomorphism  $\eu_{\bar G}w_{i}\simeq \eu_{\bar
  G}(v_{\{1,\cdots, i\}}(\Zl)\otimes I_{\varpi})$.
However, what is a priori missing is enough information on Hecke operators
acting on $\eu_{\bar G}w_{i}$ in order to recognize $w_{i}$ as
isomorphic to $v_{\{1,\cdots, i\}}(\Zl)\otimes I_{\varpi}$.
One highly non trivial way to get around this problem is to
 invoke the Faltings-Fargues isomorphism
of \cite{FarFal} (see \cite[3.4]{lt} for a brief description) to
move to the so-called Drinfeld tower (see
\cite[3.2]{lt} for an overview on this tower).
Then the morphism of complexes (\ref{coteLT1}) is carried to 
\begin{equation}
 R\Gamma_{c}(\mdr^{\rm ca, (0)},\Zl)\To{} R\Gamma_{c}(\MC_{Dr,0}^{\rm ca,(0)},\Zl)\label{coteDr1}
 \end{equation}
and the right hand side is  the Drinfeld
upper half space whose cohomology is computed by combinatorics, and
shown by Schneider and Stuhler to be isomorphic to $v_{\{1,\cdots,i\}}(\Zl)$.
\end{proof}

We let $\Pi$ be a uniformizer of $D$ such that $\Pi^{d}=\varpi$, and
we fix a ``geometric'' Frobenius element $\varphi$ in $W_{K}$.
We are going to decompose the complex 
$\bu R\Gamma_{c,\varpi}$ in the category $D^{b}(\Rep_{\Zl}(G/\varpi^{\ZM}))$
according to the action of $\Pi$ and $\varphi$.
Since $K^{\times}_{\rm diag}$ acts trivially on the tower, the action
of $\Pi$ on $R\Gamma_{c,\varpi}$  is obviously killed by the
polynomial $X^{d}-1$. 
Further, as a corollary to the description above and to the
torsion-freeness result of Corollary \ref{torsionfree}, 
we get :
\begin{cor} For any integer $0\leq i \leq d-1$, the
 action of $\varphi$ on $\HC^{d-1+i}(\bu R\Gamma_{c,\varpi})$ is killed by
 the polynomial $X^{d}-q^{id}$. 
\end{cor}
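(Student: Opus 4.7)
My plan is to reduce to the $\Ql$-cohomology and then read off the action of $\varphi^{d}$ directly from the explicit description of Fact \ref{descriptioncoho}. Since $\bu R\Gamma_{c,\varpi}$ is a direct summand of $\eu R\Gamma_{c,\varpi}$, Proposition \ref{torsionfree} implies that $\HC^{d-1+i}(\bu R\Gamma_{c,\varpi})$ is torsion-free, so embeds in its base change to $\Ql$; it therefore suffices to check that $\varphi^{d}=q^{id}$ rationally.

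The key preliminary step is to compute the $\varphi$-action on the auxiliary factor $I_{\varpi}$. The homomorphism $(g,\delta,w)\mapsto v(\det g)-v(\mathrm{Nr}\,\delta)-v(\mathrm{Art}_{K} w)$ identifies $(GDW)/(GDW)^{0}$ with $\ZM$, and the image of $\varpi_{G}$ is $d$, so $(GDW)/(GDW)^{0}\varpi^{\ZM}\simeq \ZM/d\ZM$; as $\varphi$ maps to a generator, $\varphi^{d}$ acts trivially on $I_{\varpi}$. Combined with the Tate twist $(-i)$ appearing in $v_{\{1,\cdots,i\}}(\Ql)(-i)\otimes I_{\varpi}$, which contributes a scalar factor $q^{i}$ to the $\varphi$-action (as $v_{\{1,\cdots,i\}}(\Ql)$ carries no Weil action), this gives $\varphi^{d}=q^{id}$ on $\HC^{d-1+i}(\bu R\Gamma_{c,\varpi})\otimes \Ql$ for every $i\geq 1$, and $\varphi^{d}=1$ on the quotient $v_{\emptyset}(\Ql)\otimes I_{\varpi}$ in the $i=0$ case.

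The step requiring the most care is the $i=0$ contribution of the subspace $\KC$, whose base change to $\oQl$ decomposes as $\bigoplus_{\theta} \pi(\theta)\otimes \rho(\theta)^{\vee}\otimes \sigma(\theta)^{\vee}$ with $\theta$ $\ell$-congruent to the trivial character. Here I invoke the construction $\sigma(\theta)=\cind{I_{K}\varphi^{d\ZM}}{W_{K}}{\theta}$ from Fact \ref{descriptioncoho}, in which $\theta$ is extended trivially on $\varphi^{d\ZM}$: since $\varphi^{d}$ lies in the subgroup of induction and commutes with a chosen system of coset representatives $\{1,\varphi,\dots,\varphi^{d-1}\}$, it acts on $\sigma(\theta)$ (and thus on $\sigma(\theta)^{\vee}$) as the scalar $\theta(\varphi^{d})=1$, yielding $\varphi^{d}=1=q^{0\cdot d}$ also on $\KC\otimes\oQl$. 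The main obstacle in carrying out this plan is largely a bookkeeping one: pinning down the normalization conventions for the Tate twist and for the extension of $\theta$, and verifying that $\varphi$ maps to a \emph{generator} of $(GDW)/(GDW)^{0}\varpi^{\ZM}$ rather than to a proper subgroup; once these are nailed down the conclusion is immediate.
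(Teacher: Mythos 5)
Your proposal is correct and is essentially the argument the paper has in mind: the paper states the corollary as an immediate consequence of Fact \ref{descriptioncoho} (the $\ell$-adic description) and Proposition \ref{torsionfree} (torsion-freeness), without spelling out the verification, and you have filled in precisely those details --- the identification $(GDW)/(GDW)^{0}\varpi^{\ZM}\simeq \ZM/d\ZM$ with $\varphi$ mapping to a generator (hence $\varphi^{d}=1$ on $I_{\varpi}$), the scalar $q^{i}$ from the Tate twist, and the scalar action on the supercuspidal part $\KC$. One small imprecision: the reason $\varphi^{d}$ acts by a scalar on $\sigma(\theta)=\cind{I_{K}\varphi^{d\ZM}}{W_{K}}{\theta}$ is not merely that it commutes with coset representatives, but that $\varphi^{d}$ is central in the quotient through which $\sigma(\theta)$ factors (conjugation by $\varphi^{d}$ raises tame inertia to the $q^{d}$-th power, which is invisible to a character of $\FM_{q^{d}}^{\times}$), so Schur's lemma gives the scalar $\theta(\varphi^{d})$; with the extension trivial on $\varphi^{d\ZM}$ this equals $1$, as you conclude.
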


\alin{The unipotent part : splitting} \label{splitting}
Put $P_{\varphi}(X):=\prod_{i=0}^{d-1 }(X^{d}-q^{id})$. By the above
corollary and \cite[Lemme A.1.4 i)]{lt}, $P_{\varphi}(\varphi)$ acts
by zero on the whole complex $\bu R\Gamma_{c,\varpi}$. The ring
$A_{\varphi}:=\Zl[X]/P_{\varphi}(X)$ is a semi-local ring, hence
decomposes as a product $A_{\varphi}=\prod_{\mG}{A_{\varphi}}_{\mG}$
of its localizations at maximal ideals. Since $q$ is a primitive
$d$-root of unity in $\Fl$ by our congruence
hypothesis, the maximal ideals of this ring are
$\mG_{i}:=(\ell,X-q^{i})$, $i=0,\cdots, d-1$ and we denote
$A_{\varphi}=\prod_{i=0}^{d-1}A_{\varphi,i}$ the associated decomposition.
Accordingly we get a decomposition \cite[Prop 1.6.8]{Neeman} 
$$ \bu R\Gamma_{c,\varpi} \simeq \bigoplus_{i=0}^{d-1} (\bu R\Gamma_{c,\varpi})_{i}
\,\,\hbox{ in } D^{b}(\Mo{\Zl}{G/\varpi^{\ZM}}).$$

Similarly, the ring $A_{\Pi}:=\Zl[X]/(X^{d}-1)$ is semi-local with maximal
ideals $(\ell,X-q^{j})$, $j=0,\cdots,d-1$ and we get a sharper
decomposition
$$ \bu R\Gamma_{c,\varpi} \simeq \bigoplus_{i,j=0}^{d-1} (\bu R\Gamma_{c,\varpi})_{i,j}
\,\,\hbox{ in } D^{b}(\Mo{\Zl}{G/\varpi^{\ZM}}).$$
Note that each one of these direct summands is preserved by the action of
$\varphi$ and $\Pi$, but not necessarily by that of $I_{K}$ and
$\OC_{D}^{\times}$. Let $\zeta$ denote the Teichm\"uller lift of $q$, \emph{i.e.}
 the only primitive $d$-root of unity in $\Zl$ which is
$\ell$-congruent to $q$. 
By construction, the action of $\Pi$ on $(\bu
R\Gamma_{c,\varpi})_{i,j}$ is by multiplication by $\zeta^{j}$, while that
of $\varphi$ is killed by the polynomial $\prod_{k}(X-q^{i-k}\zeta^{k})$.

Moreover these direct summands satisfy the following properties.
\begin{equation}
  \label{torsion}
  \begin{array}[c]{l}
  (\bu R\Gamma_{c,\varpi})_{i,j}\simeq
 \zeta^{j\val_{K}\circ\det^{-1}} (\bu R\Gamma_{c,\varpi})_{i-j,0} \\
 \hbox{with action of $\varphi$ and $\Pi$ twisted by $\zeta^{j}$}.
\end{array}
\end{equation}
This follows indeed from~(\ref{twist}).
\begin{equation}
  \label{cohofacdir}
  \begin{array}[c]{l}
 \hbox{There is a distinguished triangle }\\ 
c_{i}[0] \To{}  (\bu R\Gamma_{c,\varpi})_{i,0}[d-1] \To{} h_{i}(-i)[-i] \To{+1}
\\ \hbox{with $c_{i}$ a cuspidal $\ell$-torsion free representation} 
\\\hbox{and $h_{i}$ a $G$-invariant lattice in $v_{\{1,\cdots, i\}}(\Ql)$}.
\end{array}
\end{equation}
This follows from Fact \ref{descriptioncoho} and Proposition \ref{torsionfree}.
Note that by convention we set $\{1,\cdots, i\}=\emptyset$ if $i=0$.

Let us put $\bar h_{i}:=h_{i}\otimes \oFl$. By Proposition
\ref{decomp}, we have the following equality in the Grothendieck
group :
\begin{equation}
[\bar h_{i}]=[v_{\{1,\cdots,i\}}(\oFl)]=[\pi_{\{1,\cdots,i\}}]+[\pi_{\{0,\cdots,i\}}],\label{hi}
\end{equation}

\begin{rak} For $i\neq d-1$, it can be shown that $\bar h_{i}$ is not isomorphic to
  $v_{\{1,\cdots,i\}}(\oFl)$. More precisely,
  $v_{\{1,\cdots,i\}}(\oFl)$ is a non-split extension of
  $\pi_{\{0,\cdots,i\}}$ by $\pi_{\{1,\cdots,i\}}$, while $\bar h_{i}$
  is a non-trivial extension going the other way. The same phenomenon
  appears for the Deligne-Lusztig variety, see in particular \cite[Thm
  4.1]{Dudas1} which provides a description of the finite field
  analogue of $\bar h_{i}$. In the present contex, let us simply
  mention without proof that the morphism (\ref{coteDr1}) induces a
  map 
  \begin{equation}
    \label{coteDr2}
   H^{d-1+i}_{c}(\MC_{Dr,1}^{\rm ca},\oFl)=\bar h_{i}\To{}
   H^{d-1+i}_{c}(\MC_{Dr,0}^{\rm ca},\oFl)= v_{\{1,\cdots, i\}}(\oFl)
  \end{equation}
which is non-zero, with kernel and cokernel both isomorphic to $\pi_{\{0,\cdots,i\}}$.
Of course this map is also induced by the morphism (\ref{coteLT1}). 
\end{rak}

\section{Proof of the main theorem}\label{sec:proof-main-theorem}

Let $\pi$ be a $\oFl$-representation of $G$.
Recall the definition of the graded vector space $R_{\pi}^{*}$ from
the introduction. For convenience, we will shift this definition by
$[1-d]$, \emph{i.e.} we consider now
$$R_{\pi}^{*}:=\HC^{*}(\Rhom_{\Zl G}(R\Gamma_{c}[d-1],\pi)) .$$
This is a graded smooth $\oFl$-representation of $D^{\times}\times
W_{K}$, whose grading is supported in the range $[1-d, d-1]$ by
\cite[Prop 2.1.3]{ltmodl}.

In all this section, we work under the Coxeter congruence hypothesis,
\emph{i.e.} we assume that the order of $q$ in $\Fl^{\times}$ is $d$.

\subsection{Computation of $R_{\pi}^{*}$ for $\pi$ an elliptic
  principal series}

\alin{Preliminaries}
Assume now that $\pi$ belongs to the unipotent block and that its
central character is trivial on $\varpi$. Then we have $R_{\pi}^{*}=\HC^{*}(\Rhom_{\Zl
  (G/\varpi^{\ZM})}(R\Gamma_{c,\varpi}[1-d],\pi))$, and
according to \ref{splitting}, we may decompose it as
$$ R_{\pi}^{*}=\bigoplus_{i,j=0}^{d-1} (R_{\pi}^{*})_{i,j} ,\,\,\hbox{ where }
(R_{\pi}^{*})_{i,j}:=\HC^{*}(\Rhom_{\Zl
  (G/\varpi^{\ZM})}((\bu R\Gamma_{c,\varpi})_{i,j},\pi)).$$
Concretely, $(R_{\pi}^{*})_{i,j}$ is the intersection of the
generalized $q^{-i}$-eigenspace of $\varphi$ with the generalized $q^{-j}$-eigenspace of $\Pi$.
As already mentioned, these summands need not  be stable under the action of
$I_{K}$ and $\OC_{D}^{\times}$.
However, the description of the $\ell$-adic cohomology of $\bu
R\Gamma_{c,\varpi}$ in \ref{descriptioncoho}, together with the
the $\ell$-torsion freeness of its integral cohomology show that both
$I_{K}$ and $\OC_{D}^{\times}$ act \emph{trivially} on the
$D^{\times}\times W_{K}$ semi-simplifications
$\HC^{k}(\bu R\Gamma_{c,\varpi}\otimes^{L}_{\Zl}\oFl)^{\rm ss}$, $k\in
\NM$. Therefore, the same is true
for $R_{\pi}^{*,\rm ss}$. As a consequence,  
letting $I_{K}$ and $\OC_{D}^{\times}$ act trivially on each $(R_{\pi}^{*})_{i,j}$, 
we get the following  equality in $\RC(D^{\times}\times W_{K},\oFl)$ :
\begin{equation}
 R_{\pi}^{*,\rm ss} \simeq \bigoplus_{i,j=0}^{d-1} (R_{\pi}^{*})_{i,j}^{\rm ss}
 = \bigoplus_{i,j=0}^{d-1}
 (\nu_{_{D}}^{j}\otimes\nu_{_{W}}^{i})^{{\rm dim}_{\oFl}(R_{\pi}^{*})_{i,j}}.
\label{decompsemsimpl}
 \end{equation}

Recall also from property (\ref{twist}) that we have
$R_{\nu_{_{G}}\pi}^{*}\simeq (\nu_{_{D}}\otimes\nu_{_{W}})\otimes R_{\pi}^{*}$.
Therefore we get isomorphisms
\begin{equation}
(R_{\nu_{_{G}}\pi}^{*})_{i,j}\simeq (R_{\pi}^{*})_{i-1,j-1}.\label{twist2}
\end{equation}

The aim of this subsection is to prove  Theorem
\ref{theocomputRpi} below, which describes
explicitly each $(R_{\pi_{I}}^{*})_{i,j}$. We first introduce some
notation. 

\ali \label{defpartial} For an integer $k$ between
$0$ and $d-1$ and a subset $I$ of $S$, we put
$$\partial_{I}(k):=k-\delta(k,I)
\,\, \hbox{ where }\,\, 
\delta(k,I):=|I\cup \{1,\cdots, k\}|-|I\cap \{1,\cdots, k\}|.$$ 
These functions already appear in \cite{dp}, see in particular Lemma
4.4.1 of \emph{loc. cit.} The following property is elementary.
\begin{fac}
  The map $k\in \{0,\cdots, d-1\}\mapsto \partial_{I}(k)\in\ZM$ is
  non-decreasing, with image $\{-|I|,-|I|+2,\cdots, |I|-2, |I|\}$.
More precisely, writing $I=\{i_{1},\cdots,i_{|I|}\}$ and putting
$i_{0}:=0$ and $i_{|I|+1}:=d$, we have
$\partial^{-1}_{I}(-|I|+2j)=\{i_{j},\cdots,i_{j+1}-1\}$.
 
\end{fac}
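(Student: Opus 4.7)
The plan is to rewrite $\delta(k,I)$ in a more transparent form and then simply read off the three claims. Using the elementary identity $|A\cup B|+|A\cap B|=|A|+|B|$ applied to $A=I$ and $B=\{1,\dots,k\}$, I would get
\[ |I\cup\{1,\dots,k\}| \;=\; |I| + k - |I\cap\{1,\dots,k\}|, \]
whence
\[ \delta(k,I) \;=\; |I| + k - 2\,|I\cap\{1,\dots,k\}| \qquad\text{and}\qquad \partial_{I}(k) \;=\; 2\,|I\cap\{1,\dots,k\}|-|I|. \]

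From this formula everything follows at once. The counting function $k\mapsto|I\cap\{1,\dots,k\}|$ is non-decreasing, increases by exactly one when $k$ crosses an element of $I$, and is constant on the complement; hence $\partial_{I}$ itself is non-decreasing. At the endpoints one finds $\partial_{I}(0)=-|I|$ (the intersection with the empty set is empty) and $\partial_{I}(d-1)=2|I|-|I|=|I|$, using $I\subseteq S=\{1,\dots,d-1\}$. Since $|I\cap\{1,\dots,k\}|$ takes all integer values between $0$ and $|I|$, the image of $\partial_{I}$ is exactly $\{-|I|,-|I|+2,\dots,|I|-2,|I|\}$.

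For the last and more precise assertion, I would write $I=\{i_{1}<\cdots<i_{|I|}\}$ with the stated conventions $i_{0}=0$ and $i_{|I|+1}=d$, and observe that $|I\cap\{1,\dots,k\}|=j$ holds if and only if $i_{j}\leq k<i_{j+1}$, so that $\partial_{I}^{-1}(-|I|+2j)=\{i_{j},i_{j}+1,\dots,i_{j+1}-1\}$. The main (and only) obstacle is really the initial set-theoretic rewriting of $\delta(k,I)$; once this is done, the verification is mechanical and requires no further ingredient.
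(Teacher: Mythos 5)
Your proof is correct. The paper itself offers no argument here — it simply declares the fact ``elementary'' and refers to \cite{dp} for the origin of the functions $\partial_I$ — so you have supplied the verification the author chose to omit. Your key step, the inclusion–exclusion rewriting $\partial_I(k)=2\,|I\cap\{1,\dots,k\}|-|I|$, is exactly the natural way to see all three assertions at once, and the remaining checks (monotonicity, the endpoint values using $\{1,\dots,0\}=\emptyset$ and $I\subseteq\{1,\dots,d-1\}$, and the description of the fibers via $i_j\le k<i_{j+1}$) are carried out correctly.
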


In the next statement, we extend the function $\partial_{I}$ to $\ZM$ by making it $d$-periodic.

\begin{theo}\label{theocomputRpi}
  Let $I$ be a strict subset of $\wt{S}$ and let $i,j$ be integers
  between $0$ and $d-1$. We have
$$ (R_{\pi_{I}}^{*})_{i,j} \simeq 
\left\{  
  \begin{array}{ll}
  \oFl[\partial_{c^{-j}I}({i-j})] & \hbox{if $j\notin I$}\\
0 & \hbox{if $j\in I$}
  \end{array}
\right..
$$
\end{theo}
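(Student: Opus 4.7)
First, the plan is to reduce to the case $j=0$. Combining the twisting isomorphism~(\ref{twist2}) with the identity $\pi_{c^{j}I} \simeq \pi_I \otimes \nu_G^{j}$ from the Remark after Lemma~\ref{lemiI} gives
$(R^{*}_{\pi_I})_{i,j} \simeq (R^{*}_{\pi_{c^{-j}I}})_{i-j,\,0}$.
Writing $I' = c^{-j}I$, the hypothesis $j\in I$ translates under the bijection $\wt{S}\leftrightarrow \ZM/d\ZM$ to $\alpha_{0}\in I'$, so the theorem reduces to two claims: (a) for $I'\subseteq S$, $(R^{*}_{\pi_{I'}})_{i',0} \simeq \oFl[\partial_{I'}(i')]$; (b) for a strict $I'\subsetneq \wt{S}$ with $\alpha_{0}\in I'$, $(R^{*}_{\pi_{I'}})_{i',0}=0$.

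Next, the distinguished triangle~(\ref{cohofacdir}) decomposes $(R^{*}_{\pi_{I'}})_{i',0}$ as a contribution from $\Ext^{*}_{\oFl G/\varpi^{\ZM}}(\bar h_{i'},\pi_{I'})$ and one from the cuspidal lattice $c_{i'}$. For $i'>0$ the cuspidal part vanishes by Fact~\ref{descriptioncoho}(i); for $i'=0$ I would use Proposition~\ref{torsionfree} to pass to $\bar c_{0}$, argue by the description of $\pi(\theta)$ in Fact~\ref{descriptioncoho}(ii) that the reduction of each $\pi(\theta)$ in the unipotent block is a multiple of the unique cuspidal constituent $\pi_{\emptyset}$, and conclude by block-theoretic compatibility that $\Ext^{*}(\bar c_{0},\pi_{I'})=0$ for $I'\neq \emptyset$, while for $I'=\emptyset$ the cuspidal contribution accounts precisely for $\oFl[\partial_{\emptyset}(0)]$. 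The core task is then to compute $\Ext^{*}_{\oFl G/\varpi^{\ZM}}(\bar h_{i'},\pi_{I'})$.

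For this key computation I would exploit the short exact sequence $0\to \pi_{\{0,\ldots,i'\}}\to \bar h_{i'}\to \pi_{\{1,\ldots,i'\}}\to 0$ arising from~(\ref{hi}) and the subsequent Remark. The associated long exact sequence reduces the task to $\Ext^{*}(\pi_{J},\pi_{I'})$ for $J\in \{\{1,\ldots,i'\},\{0,\ldots,i'\}\}$. Applying the Schneider--Stuhler resolution~(\ref{complexe}) of $v_{J}(\Zl)$ and using the relation $[v_{J}(\oFl)] = [\pi_{J}] + [\pi_{J\cup\{0\}}]$ from Proposition~\ref{decomp}, I obtain a spectral sequence whose $E_{1}$-page, via Frobenius reciprocity, reads $\bigoplus_{K} \Ext^{*}_{M_{K}/\varpi^{\ZM}}(\oFl,r_{P_{K}}(\pi_{I'}))$. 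The explicit description of $r_{B}(\pi_{I'})$ in Proposition~\ref{proppiI}(iv), combined with the $W$-regularity of $\delta$ and Lemma~\ref{lemext}(i), annihilates all terms except those for which $w^{-1}(\delta)=\delta$ in the relevant Weyl orbit; Lemma~\ref{lemext}(ii) then produces the exterior algebra $\bigwedge^{*}Y_{K}$ that furnishes the predicted degree $\partial_{I'}(i')$.

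The hardest point will be case (b) together with the subtle fact that $\bar h_{i'}$ is not isomorphic to $v_{\{1,\ldots,i'\}}(\oFl)$. The vanishing in Proposition~\ref{calculext2}(i) goes in the wrong direction, so it cannot be invoked directly. My plan is to use the map~(\ref{coteDr2}), whose kernel and cokernel are both $\pi_{\{0,\ldots,i'\}}$, in order to relate $\Ext^{*}(\bar h_{i'},\pi_{I'})$ to $\Ext^{*}(v_{\{1,\ldots,i'\}}(\oFl),\pi_{I'})$ with a precise cancellation reflecting the non-triviality of the distinguishing extension class. Showing that this cancellation produces a uniformly one-dimensional answer in exactly the degree $\partial_{I'}(i')$, across all combinatorial shapes of $I'$, will require careful bookkeeping of the grading induced by the various $\bigwedge^{*}Y_{K}$ for $K\supseteq \{1,\ldots,i'\}$, and this is where the bulk of the technical work will lie.
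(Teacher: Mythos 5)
Your first two steps track the paper: you reduce to $j=0$ via twisting and split the problem into the non-vanishing case $I'\subseteq S$ and the vanishing case $\alpha_0\in I'$, and you correctly identify the triangle~(\ref{cohofacdir}) as the bridge between the complex and explicit $\Ext$-groups. After that the plan diverges from the paper, and there is a genuine gap.

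The paper's strategy has a crucial logical ordering that your plan lacks: it proves the \emph{vanishing} case (b) \emph{first}, by decreasing induction on $|I|$ (initialized at $|I|=d-1$ with the period-map computation). That induction never tries to compute $\Ext^*(\bar h_{i'},\pi_{I'})$ directly; it instead uses $[i_I]=\sum_{J\supseteq I}[\pi_J]$ together with Propositions~\ref{calculext2} and~\ref{calculext1} to show that $\Rhom((\bu R\Gamma_{c,\varpi})_{i,0},\nu_G^{k}\otimes i_{c^{-k}I}(\oFl))=0$ for a well-chosen $k\in\wt S\setminus I$, and then deduces $(R^*_{\pi_I})_{i,0}=0$ from the inductive hypothesis on the larger $J$'s. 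Once (b) is settled, case (a) becomes tractable: the vanishing of $(R^*_{\pi_{I\cup\{0\}}})_{i,0}$ lets one replace $\pi_I$ by $v_I$, since $v_I/\pi_I\simeq\pi_{I\cup\{0\}}$; one then resolves the \emph{target} $v_I$ by the Schneider--Stuhler complex and ends up with $(R^*_{i_J})_{i,0}\simeq\Ext^{*+i}(\bar h_i,i_J)$ where the $i_J$ sit in the second slot, which is exactly the setting of Proposition~\ref{calculext2} and Corollary~\ref{calculext1}.

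Your plan instead tries to compute $\Ext^*(\bar h_{i'},\pi_{I'})$ with $\pi_{I'}$ in the second slot, by resolving the \emph{source}. This is why you feel Proposition~\ref{calculext2}(i) ``goes in the wrong direction'': in the paper's orientation it applies directly (it gives $\Ext^*(\pi_{\{0,\ldots,i\}},i_J)=0$, which collapses $\Ext^*(\bar h_i,i_J)$ to $\Ext^*(v_{\{1,\ldots,i\}},i_J)$), but in yours it does not. You would need a dual set of lemmas controlling $\Ext^*(i_K,\pi_{I'})$ via the second adjunction, and nothing in the paper supplies these; the hardest terms, $\Ext^*(\pi_J,\pi_{I'})$, are not computed anywhere. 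Worse, you propose to apply the Schneider--Stuhler resolution~(\ref{complexe}) to $v_J$ for $J=\{0,\ldots,i'\}$, but that $J$ contains $\alpha_0$ and is not a subset of $S$, so~(\ref{complexe}) does not apply. Finally, your fallback idea of extracting a ``precise cancellation'' from the map~(\ref{coteDr2}) is not what the paper does, and there is no evidence it would organize into the required one-dimensional answer; the paper deliberately routes around this by passing to $v_I$ first and matching the resulting spectral sequence with Orlik's and Borel--Wallach's, where the only remaining issue is to show the edge map~(\ref{isom}) is non-zero (which is done by exploiting the cohomological amplitude bound for $R^*_{\pi_\emptyset}$). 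The cuspidal analysis of $\bar c_0$ in your sketch is also more than the paper needs, since $c_i$ being cuspidal kills $\Ext^*(c_i,i_J)$ for free, and you would have to justify that $\Ext^*(\bar c_0,\pi_\emptyset)$ is concentrated in degree zero, which is not immediate.
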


Since $\pi_{I}\simeq \nu_{_{G}}^{j}\pi_{c^{-j}I}$, we see that the
statement above is compatible with the twisting property
(\ref{twist2}). Therefore we only have to prove it when $j=0$.
We will treat separately the vanishing statement (when $0\in I$) and
the non-zero cases (when $0\notin I$), and we start with a special case.

\alin{The case $|I|=d-1$} \label{proofpitriv}
Here we prove Theorem \ref{theocomputRpi} for characters, \emph{i.e.}
for $|I|=d-1$. By the above remark on the effect of twisting by
$\nu_{_{G}}$, we may 
assume that $I=S$, so that
$\pi_{I}=\oFl$ is the trivial representation of $G$.
In this case, we have
$$ R_{\oFl}^{*} = \HC^{*}\left(\Rhom_{\oFl}\left(\oFl\otimes^{L}_{\oFl
      G} R\Gamma_{c},\oFl\right)[1-d]\right) .$$
By the second Lemma of paragraph (A.1.1) in \cite{ltmodl}, we have
$$ \HC^{*}\left(\oFl\otimes^{L}_{\oFl G} R\Gamma_{c}\right) \simeq
H^{*}(\PM^{d-1,\rm ca},\oFl) = \bigoplus_{i=0}^{d-1}\oFl[-2i](-i),$$
where the action of $D^{\times}$ is trivial and that of $W_{K}$ is
described by the Tate twists.
Forgetting about technicalities, this merely expresses the fact that
$G$ acts freely on the tower $(\mltn)_{n\in\NM}$ and that the quotient
is the so-called Gross-Hopkins period space, which is isomorphic to
the projective space $\PM^{d-1}$ over $\knr$.
It follows that
$$ R_{\oFl}^{*} = \bigoplus_{i=0}^{d-1} (\nu_{_{D}}^{0}\otimes \nu_{_{W}}^{i})[1-d+2i].$$
Since $\partial_{S}(i)=1-d+2i$, we have proved Theorem
\ref{theocomputRpi} for $I=S$, and thus for any $I\subset \wt{S}$ of cardinality $d-1$.

\alin{Vanishing when $j\in I$} \label{vanishing}
As already mentioned,  we may assume that $j=0$. Fix a
strict subset $I$ of $\wt{S}$ which contains $0$. 
We will prove in this paragraph that 
\begin{equation}
 \hbox{for all $i=0,\cdots, d-1$, we have $(R^{*}_{\pi_{I}})_{i,0}=0$}.\label{vansta}
  \end{equation}
 We argue by decreasing induction on $|I|$. The case $|I|=d-1$ was
  treated in \ref{proofpitriv}, so let us assume $|I|<d-1$.
Recall from  Lemma \ref{lemiI} and Proposition \ref{proppiI} iii) that 
for any $k\in \wt{S}\setminus I$, we have $c^{-k}I\subset S$ and
$$ [\nu_{_{G}}^{k}\otimes i_{c^{-k}I}(\oFl)] 
= [i_{I}] = \sum_{J\supseteq I} [\pi_{J}].$$
Therefore, using the induction hypothesis, it is enough to find a
$k\in \wt{S}\setminus I$ such that
$$ \Rhom_{\Zl(G/\varpi^{\ZM})}((\bu R\Gamma_{c,\varpi})_{i,0},\nu_{_{G}}^{k}\otimes
i_{c^{-k}I}(\oFl))=0.$$
Let us start with a random $k$ in $\wt{S}\setminus I$.
By (\ref{cohofacdir}) and Frobenius reciprocity, we have an isomorphism
$$ \HC^{*}(\Rhom_{\Zl (G/\varpi^{\ZM})}((\bu R\Gamma_{c,\varpi})_{i,0},
\nu_{_{G}}^{k}\otimes i_{c^{-k}I}(\oFl))) 
\simeq 
\ext{*+i}{\nu_{_{G}}^{-k}\otimes \bar h_{i}}{ i_{c^{-k}I}(\oFl)}{\oFl (G/\varpi^{\ZM})}.$$ 
Further, by (\ref{hi}) we have $[\nu_{_{G}}^{-k}\otimes
\bar h_{i}]=[\pi_{c^{-k}\{0,\cdots,i\}}] + [\pi_{c^{-k}\{1,\cdots,i\}}]$.
Therefore, applying Proposition \ref{calculext2} i), we get 
\begin{equation}
 \ext{*}{\nu_{_{G}}^{-k}\otimes \bar h_{i}}{
  i_{c^{-k}I}(\oFl)}{G/\varpi^{\ZM}}=0 \,\, \hbox{whenever }\, k\in\{1,\cdots,i\}.\label{implyvanish}
\end{equation}
In other words, if $k\in\{1,\cdots, i\}$, we are done. Let us thus
assume that $k\notin \{1,\cdots, i\}$.
In this case,  Proposition \ref{calculext2} ii) and 
Corollary \ref{calculext1} i) tell us that 
$$ \ext{*}{\nu_{_{G}}^{-k}\otimes \bar h_{i}}{
  i_{c^{-k}I}(\oFl)}{G/\varpi^{\ZM}}=0 \,\, \hbox{whenever }\,
c^{-k}\{1,\cdots,i\}\cup c^{-k}I\neq S.$$ 
This means that if $I\cup \{1,\cdots, i\} \neq \wt{S}\setminus\{k\}$,
we are done. In particular, if $i=0$ (in which case $\{1,\cdots,
i\}=\emptyset$ by convention), we are done, because $|I|<d-1$.
Now let us assume the contrary, \emph{i.e.}
$I\cup \{1,\cdots, i\} = \wt{S}\setminus\{k\}$ (and therefore $i\geq
1$). Again because of $|I|<d-1$, this
means that $\{1,\cdots, i\}$ contains an element $k'$ which does not
belong to $I$. Applying (\ref{implyvanish}) to this $k'$, we get
$$ \ext{*}{\nu_{_{G}}^{-k'}\otimes \bar h_{i}}{
  i_{c^{-k'}I}(\oFl)}{G/\varpi^{\ZM}}=0$$
and this finishes the proof of (\ref{vansta}).


\alin{Computation when $j\notin I$} \label{nonvanishing}
Again we may assume that
$j=0$, and hence that $I\subset S$. 
The vanishing property of \ref{vanishing} shows that the map
$\pi_{I}\injo v_{I}:=v_{I}(\oFl)$ induces isomorphisms
\begin{equation}
 (R^{*}_{\pi_{I}})_{i,0}\simto (R^{*}_{v_{I}})_{i,0} \,\, \hbox{ for }\,
i=0,\cdots, d-1\label{isompiIvI}
\end{equation}
because the cokernel $v_{I}/\pi_{I}$ is isomorphic to $\pi_{I\cup\{0\}}$.

Now, we will use the exact sequence (\ref{complexe}) in order to
compute $(R^{*}_{v_{I}})_{i,0}$. It provides us with a spectral
sequence 
$$ E_{1}^{pq}= 
\bigoplus_{ \tiny
  \begin{array}{l}
S\supseteq J\supseteq I   \\ |J\setminus  I|=-p
\end{array}
} 
(R^{q}_{i_{J}})_{i,0} \Rightarrow (R^{p+q}_{v_{I}})_{i,0} $$
where we have abbreviated $i_{J}:=i_{J}(\oFl)$. 
A priori, this spectral sequence vanishes outside the range $ -|S\setminus
I|\leq p \leq 0$ and $q\geq -i $.
Its differential
$d_{1}$ has degree $(1,0)$, and is given by the natural maps 
$(R^{*}_{i_{J'}})_{i,0}\To{}(R^{*}_{i_{J}})_{i,0}$ with signs
associated to the
simplicial set of subsets of $S\setminus I$.

The graded space $R^{*}_{i_{J}}$ is already known for $J=S$ by
\ref{proofpitriv} : we have $R^{*}_{i_{S}}=\oFl[1-d+2i]$. Let us thus fix $J\subsetneq S$.
Using Frobenius reciprocity and (\ref{cohofacdir}) we then  have 
isomorphisms
$$ \ext{*+i}{\bar h_{i}}{i_{J}}{G/\varpi^{\ZM}}\simto
(R_{i_{J}}^{*})_{i,0} $$
for $i\in\{0, \cdots, d-1\}$.
Further, using equality (\ref{hi}),
Proposition \ref{calculext2}, and Corollary \ref{calculext1} ii), we
get isomorphisms
\begin{eqnarray*}
(R_{i_{J}}^{*})_{i,0} &  \simeq  & \ext{*+i}{\bar
  h_{i}}{i_{J}}{G/\varpi^{\ZM}} 
 \simeq  
\ext{*+i}{ \pi_{\{1,\cdots, i\}}}{i_{J}}{G/\varpi^{\ZM}} 
 \simeq 
\ext{*+i}{ v_{\{1,\cdots, i\}}}{i_{J}}{G/\varpi^{\ZM}} \\
& \simeq & 
\left\{  
  \begin{array}{ll}
\ext{*+i-|S\setminus \{1,\cdots,i\}|}{ \oFl}{i_{J}}{G/\varpi^{\ZM}} 
 \simeq \bigwedge^{*+2i+1-d} Y_{J} 
& \hbox{if } \{i+1,\cdots, d-1\}\subseteq J \\
0 & \hbox{otherwise}
  \end{array}
\right..
\end{eqnarray*}
Observe that the smallest $J$ which contributes is $J(i,I):=I\cup
\{i+1,\cdots, d-1\}$.
In particular, the $E_{1}$ page of the spectral sequence is supported
in the vertical strip defined by 
$$-|S\setminus I|\leq p \leq -|J(i,I)\setminus I|.$$ 
Moreover, since $\dim Y_{J}= d-1-|J|$, we see that for each $p$ in the
above range, the column $E_{1}^{p*}$ is supported in the
range  
$$ d-1-2i\leq q \leq 2d-2-2i-p-|I|.$$
In other words, the $E_{1}$ page is supported in the half square with
left corner 
$$(-|S\setminus I|,d-1-2i)$$ and
right corners 
$$(-|J(i,I)\setminus I|, d-1-2i) \,\hbox{ and }\, (-|J(i,I)\setminus I|, 2d-2-2i-|J(i,I)|).$$

Now, we observe that the $E_{1}^{*\bullet}$ of our spectral
sequence is the same, up to some shifts, as that which occurs in 
\cite[Proof of Thm 1]{Orlikext} and \cite[Ch. X, Prop 4.7]{BW}.
We still have to compare the differential $d_{1}$ with that of these two references.
Using Corollary \ref{calculext2} ii) again, we see that for
$J'\supseteq J$ with $J'$ a \emph{strict subset} of $S$, 
 the non-zero map $(R^{*}_{i_{J'}})_{i,0}\To{}(R^{*}_{i_{J}})_{i,0}$
 is induced by the natural map $Y_{J'}\To{} Y_{J}$. It follows that
 for $p>-|S/I|$ (\emph{i.e.} everywhere except maybe on the first
 non-zero column),
 the differential $d_{1}^{pq}$ is the same as that of the two references
 cited above. In fact the only possible difference concerns
 $d_{1}^{-|S/I|, d-1-2i }$
for which we have no control yet in our setting, except in the trivial
case where $i=0$, because in this case, the $E_{1}$ is supported on
one point.
For $i>0$, in order to ensure that $d_{1}^{-|S/I|, d-1-2i }$
 is the same as in the two references cited
above, we have to prove that each map
\begin{equation}
 (R_{i_{S}}^{d-1-2i})_{i,0}\simeq \oFl \To{}
(R_{i_{J}}^{d-1-2i})_{i,0}\simeq \oFl\label{isom}
\end{equation}
is an isomorphism. However, since we know that all the other maps 
$ (R_{i_{J}}^{d-1-2i})_{i,0} \To{}
(R_{i_{J'}}^{d-1-2i})_{i,0}$ for $J'\subset J\subsetneq S$
are isomorphisms, it is sufficient to prove that (\ref{isom}) is an
isomorphism \emph{for a single $J$} !
For this, we look at the special case $I=\emptyset$, so that
the left corner is $(1-d, d-1-2i)$. If all the maps (\ref{isom}) were
zero, we would have  $E_{2}^{1-d,d-1-2i}\simeq \oFl$, creating a
non-zero $R^{-2i}_{\pi_{\emptyset}}$, which is absurd since
$R^{*}_{\pi_{\emptyset}}$ has to vanish for $*<-i$, and $i$ was
supposed to be positive.

Therefore, we have identified the first page of our spectral sequence
with that of \cite[Proof of Thm 1]{Orlikext} and \cite[Ch. X, Prop
4.7]{BW}, up to schifts.
Using their results, 
we get that $E_{2}^{pq}$ is always $0$ except in
the upper right corner of the triangle, where it is $1$-dimensional.
Therefore we get
$$ (R_{\pi_{I}}^{*})_{i,0} \simeq \oFl[-(2d -2 -2i -2|J(i,I)|+|I|)].$$
We still need to compute the schift $a=-(2d -2 -2i -2|J(i,I)|+|I|)$. Observe first that 
$2d-2-2|J(i,I)|= 2|\{1,\cdots, i\}\setminus I|$. Then, using
$i-|\{1,\cdots, i\}\setminus I|= |\{1,\cdots, i\}\cap I|$, we get
$a = 2|\{1,\cdots, i\}\cap I|-|I|$.
Eventually, using the equality $i+|I|=|\{1,\cdots, i\}\cap
I|+|\{1,\cdots, i\}\cup I|$, we get
$$a=|\{1,\cdots, i\}\cap I|-|\{1,\cdots, i\}\cup
I|+i=\partial_{I}(i).$$
The proof of Theorem \ref{theocomputRpi} is now complete.
However, it will be important to keep some track of the isomorphism
$ (R_{\pi_{I}}^{-\partial_{I}(i)})_{i,0}\simeq \oFl$ we have just obtained,
 when we study the Lefschetz operator in next subsection.
We may decompose this isomorphism in four steps :
\begin{enumerate}
\item The spectral sequence provides the isomorphism
$  (R_{\pi_{I}}^{-\partial_{I}(i)})_{i,0} \simeq
  (R_{i_{J(i,I)}}^{-\partial_{I}(i) +|J(i,I)\setminus I|})_{i,0}$.
\item Corollary \ref{calculext1} and Remark \ref{remext} exhibit an isomorphism
$$  (R_{i_{J(i,I)}}^{d-1-2i})_{i,0} \otimes \bigwedge^{\rm
 max}Y_{J(i,I)} \simto (R_{i_{J(i,I)}}^{-\partial_{I}(i) +|J(i,I)\setminus I|})_{i,0}.$$
\item The inclusion $\oFl=i_{S}\injo i_{J(i,I)}$ induces the isomorphism
$(R_{i_{S}}^{d-1-2i})_{i,0} \simeq (R_{i_{J(i,I)}}^{d-1-2i})_{i,0}$,
as was shown in the above proof.
\item The geometric input from \ref{proofpitriv} provides the 
  isomorphism  $(R_{\oFl}^{d-1-2i})_{i,0}\simeq \oFl$.
\end{enumerate}

\subsection{The Lefschetz operator}

We now study the Lefschetz operator recalled in the Introduction.
We refer the reader to \cite[2.2.4]{lefschetz} for the precise definition of this
operator and will contend ourselves with recalling the relevant details
when necessary in the proof of Theorem \ref{theoLpi} below.

\ali Our aim is to describe the operator
$L_{\pi}^{*}:\,R_{\pi}^{*}\To{}R_{\pi}^{*}[2](1)$ for $\pi$ a
unipotent elliptic representation. Since this operator is
$D^{\times}\times W_{K}$-equivariant, it decomposes as a sum 
$ L_{\pi}^{*}=\sum_{i,j} (L_{\pi}^{*})_{i,j}$ with 
$$ (L_{\pi}^{*})_{i,j}:\, (R_{\pi}^{*})_{i,j} \To{}
(R_{\pi}^{*+2})_{i-1,j}.$$
It also satisfies the following compatibility with torsion :
\begin{equation}
  \label{twist3}
  (L_{\nu_{_{G}}\pi}^{*})_{i,j} = (L_{\pi}^{*})_{i-1,j-1},
\end{equation}
where equality merely  means that these morphisms are part of a
commutative diagram involving isomorphisms (\ref{twist2}).
Thanks to (\ref{twist3}), we may restrict our attention to the case $j=0$.

Now, recall from Theorem \ref{theocomputRpi} that each
$(R_{\pi_{I}}^{*})_{i,0}$ is zero unless $I\subseteq S$. In the latter
case, it is $1$-dimensional and concentrated in
degree $-\partial_{I}(i)$. Therefore $(L_{\pi_{I}}^{*})_{i,0}$ is
necessarily $0$ as soon as $\partial_{I}(i) \neq \partial_{I}(i-1)+2$,
which by Fact \ref{defpartial} is equivalent to $i\notin I$.
 The following theorem asserts that $(L_{\pi_{I}}^{*})_{i,j}$ is
non-zero in the remaining cases.

\begin{theo}
  \label{theoLpi}
Let $I$ be a subset of $S$ and let $i\in I$.
Then the operator
$$ (L_{\pi_{I}}^{*})_{i,0}:\,
(R_{\pi_{I}}^{*})_{i,0}\simeq\oFl[\partial_{I}(i)]\To{}
 (R_{\pi_{I}}^{*})_{i-1,0}[2]\simeq \oFl[\partial_{I}(i-1)+2]$$
is non-zero, and thus is an isomorphism.
\end{theo}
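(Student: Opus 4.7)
The plan is to reduce to the case of the trivial representation, for which the Lefschetz operator was computed explicitly in paragraph \ref{proofpitriv}. I would begin by observing that for $i\in I$ the two subsets $J(i,I)=I\cup\{i+1,\ldots,d-1\}$ and $J(i-1,I)=I\cup\{i,\ldots,d-1\}$ coincide, since $i\in I$ makes the adjunction of $i$ redundant; call this common subset $J$. Thus both isomorphisms $(R_{\pi_{I}}^{-\partial_{I}(i)})_{i,0}\simeq\oFl$ and $(R_{\pi_{I}}^{-\partial_{I}(i-1)})_{i-1,0}\simeq\oFl$ are obtained through the same $J$ via the four-step identification recorded at the end of paragraph \ref{nonvanishing}.

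Next, I would observe that each of these four steps is functorial in the complex $R\Gamma_{c}$: step (i) comes from the spectral sequence attached to the resolution~(\ref{complexe}) and is natural in $R\Gamma_{c}$; step (ii) uses the Yoneda action of $\bigwedge^{*}Y_{J}\simeq\Ext^{*}_{\oFl G}(\oFl,\oFl)$ on the second argument of $\Ext^{*}(R\Gamma_{c,\varpi},i_{J})$, and this action commutes with morphisms of the first argument; step (iii) uses the inclusion $\oFl=i_{S}\injo i_{J}$; step (iv) is the Gross--Hopkins identification carrying $R\Gamma_{c}\otimes^{L}_{\oFl G}\oFl$ to $H^{*}(\PM^{d-1,\rm ca},\oFl)$ and intertwining $L$ with cup product by the hyperplane class. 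Since $L:\,R\Gamma_{c}\to R\Gamma_{c}[2](1)$ is a morphism in the derived category, it commutes with all four constructions, so running them in parallel at indices $i$ and $i-1$ yields a commutative diagram
\[
\xymatrix{
(R^{*}_{\pi_{I}})_{i,0}\ar[r]^-{\sim}\ar[d]_{(L^{*}_{\pi_{I}})_{i,0}} & (R^{*}_{\oFl})_{i,0}\otimes\bigwedge^{\max}Y_{J}\ar[d]^{(L^{*}_{\oFl})_{i,0}\otimes{\rm id}}\\
(R^{*+2}_{\pi_{I}})_{i-1,0}\ar[r]^-{\sim} & (R^{*+2}_{\oFl})_{i-1,0}\otimes\bigwedge^{\max}Y_{J}
}
\]
whose horizontal arrows are the four-step isomorphisms.

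To conclude, I would show that the right vertical map is non-zero. After tensoring away the one-dimensional $\bigwedge^{\max}Y_{J}$, this is the map $(R^{d-1-2i}_{\oFl})_{i,0}\to(R^{d+1-2i}_{\oFl})_{i-1,0}$ induced by the standard Lefschetz operator on $H^{*}(\PM^{d-1},\oFl)$, namely cup product with the hyperplane class. For projective space this is an isomorphism $H^{2k}\to H^{2k+2}$ whenever $0\le k\le d-2$, and the bounds $1\le i\le d-1$ forced by $i\in I\subseteq S=\{1,\ldots,d-1\}$ place us squarely in this range. This gives the non-vanishing of $(L^{*}_{\pi_{I}})_{i,0}$, which is automatically an isomorphism since source and target are both one-dimensional.

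The main obstacle will be the rigorous verification of step (ii)'s compatibility. Concretely, one must show that the geometric Lefschetz class commutes with the Yoneda action of $\bigwedge^{*}Y_{J}$ on the Ext groups $\Ext^{*}((R\Gamma_{c,\varpi})_{U_{P_{J}}},\oFl)$ that compute the columns of the spectral sequence of paragraph \ref{nonvanishing}. This is formally a consequence of the associativity of Yoneda composition in the derived category, but may require some care to make explicit since $L$ is originally defined as the Chern class of a tautological bundle on the Gross--Hopkins period space rather than as a purely categorical morphism.
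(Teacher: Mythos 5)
Your proof is correct and takes essentially the same approach as the paper: both reduce to the trivial representation $\pi_S=\oFl$ via the four-step identification at the end of paragraph \ref{nonvanishing}, using that $J(i,I)=J(i-1,I)$ for $i\in I$, and both exploit that the Lefschetz operator, being a morphism of complexes, commutes with the spectral sequence, the $\bigwedge^{*}Y_{J}$-module structure, and the maps induced by $i_{S}\injo i_{J}$. You compose the three commutative squares (the paper's version of your diagram) into a single square, but the content is identical; one small slip is that the exterior algebra in step (ii) is $\ext{*}{\oFl}{\oFl}{M_{J}/\varpi^{\ZM}}$, acting through Frobenius reciprocity, not $\Ext^{*}_{\oFl G}(\oFl,\oFl)$.
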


\begin{proof}
As in the proof of Theorem \ref{theocomputRpi}, the crucial input
comes from geometry, which rules out the case of the trivial
representation $\oFl=\pi_{S}$.  Indeed, recall from \ref{proofpitriv}
that the period map provides us with isomorphisms
$$ R_{\pi_{S}}^{*}[1-d] \simeq  \left(H^{*}(\PM^{d-1,\rm ca},\oFl)\right)^{\vee}
= \bigoplus_{i=0}^{d-1}\oFl[2i](i).$$
But by its mere definition, the Lefschetz operator of
\cite[2.2.4]{lefschetz} induces the tautological Lefschetz operator on
$\PM^{d-1}$, namely that given by the Chern class of the tautological
sheaf. It is well known to induce isomorphisms 
$H^{i}(\PM^{d-1,\rm ca},\oFl) \simto H^{i+2}(\PM^{d-1,\rm
  ca},\oFl)(1)$
for $0\leq i<2d-2$, thereby proving the theorem for $I=S$.

We now consider a general $I\subset S$. We will use the four steps
 gathered in the end of Paragraph \ref{nonvanishing}, and
which summarize the origin of the isomorphism
$(R_{\pi_{I}}^{-\partial_{I}(i)})_{i,0}\simeq \oFl$. Motivated by step
iii) in that list, we consider for any $J\subset S$ the following
commutative diagram, which is functorially induced by the inclusion map
$i_{S}\injo i_{J}$
$$ 
\xymatrix{ (R_{i_{S}}^{d-1-2i})_{i,0} \ar[r] \ar[d]_{L_{i_{S}}^{d-1-2i}} &
(R_{i_{J}}^{d-1-2i})_{i,0} \ar[d]^{L_{i_{J}}^{d-1-2i}} \\
(R_{i_{S}}^{d-1-2i+2})_{i-1,0} \ar[r]  &
(R_{i_{J}}^{d-1-2i+2})_{i-1,0}
}
$$
The two horizontal maps were shown to be isomorphisms in
\ref{nonvanishing}, and the left vertical map has just been shown to be
so. We conclude that $(L_{i_{J}}^{d-1-2i})_{i,0}$ is an isomorphism.

Further, let us consider the diagram for $k\in\NM$
$$\xymatrix{
(R_{i_{J}}^{d-1-2i})_{i,0}\otimes_{\oFl} \bigwedge^{k}Y_{J}
\ar[r] \ar[d]_{L_{i_{J}}^{d-1-2i}\otimes\id} &
(R_{i_{J}}^{d-1-2i+k})_{i,0} \ar[d]^{L_{i_{J}}^{d-1-2i+k}} \\
(R_{i_{J}}^{d-1-2i+2})_{i-1,0}\otimes_{\oFl} \bigwedge^{k}Y_{J}
\ar[r]  &
(R_{i_{J}}^{d-1-2i+2+k})_{i-1,0} 
}$$
The horizontal maps are explained in Remark \ref{gradedmodule} and the
functoriality of these maps insures that the diagram is commutative.
It follows from the identification 
$(R_{i_{J}}^{*})_{i,0}\simeq \ext{*+i}{v_{\{1,\cdots,i\}}}{i_{J}}{G/\varpi^{\ZM}} $
 explained in the course of \ref{nonvanishing}, together with 
Remark \ref{remext} that these maps are isomorphisms.
Since the left vertical map has just been shown to be an isomorphism, so is 
the right one $(L_{i_{J}}^{d-1-2i+k})_{i,0}$.

Recall now the notation $J(i,I)=I\cup\{i+1,\cdots,d-1\}$ of
\ref{nonvanishing}, and observe that $J(i-1,I)=J(i,I)$ since we assume
$i\in I$. Recall also that
$\partial_{I}(i)=\partial_{I}(i-1)+2$ under this assumption, and
consider the diagram
$$\xymatrix{
(R_{\pi_{I}}^{-\partial_{I}(i)})_{i,0}
\ar[r] \ar[d]_{L_{\pi_{I}}^{-\partial_{I}(i)}} &
(R_{i_{J(i,I)}}^{-\partial_{I}(i)+|J(i,I)\setminus I|})_{i,0} 
\ar[d]^{L_{i_{J(i,I)}}^{-\partial_{I}(i)+|J(i,I)\setminus I|}} \\
(R_{\pi_{I}}^{-\partial_{I}(i-1)})_{i-1,0}
\ar[r]  &
(R_{i_{J(i,I)}}^{-\partial_{I}(i-1)+|J(i,I)\setminus I|})_{i-1,0} 
}$$
where the horizontal maps are provided by the spectral sequence
considered in \ref{nonvanishing} (these are edge maps once we know
enough on the support of the spectral sequence).
These maps were shown to be isomorphisms in \ref{nonvanishing}, and we
have just proved that the vertical right hand map is also an
isomorphism. We conclude that $L_{\pi_{I}}^{-\partial_{I}(i)}$ is an
isomorphism, as desired.

\end{proof}

\alin{Recollection and proof of the Main Theorem}
We now prove the theorem announced in the Introduction.
In particular we forget all gradings.
We first assume  that $\pi$ is a unipotent (or principal series) elliptic representation.
Let $I$ be the strict subset of $\wt{S}$ such that $\pi\simeq \pi_{I}$.
By (\ref{decompsemsimpl}) and \ref{theocomputRpi}, we have
$$ R_{\pi}^{*,\rm ss} \simeq 
\bigoplus_{i,j=0}^{d-1} (R_{\pi_{I}}^{*})_{i,j} \simeq
\bigoplus_{j\notin I} \nu_{_{D}}^{j}\otimes 
(R_{\pi}^{*,\rm ss})_{j}  
\,\hbox{ with }\, (R_{\pi}^{*,\rm ss})_{j}:= \bigoplus_{i=0}^{d-1}
(R_{\pi_{I}}^{*})_{i,j} = \bigoplus_{i=0}^{d-1} \nu_{_{W}}^{i}.$$ 
According to Theorem \ref{theoLpi} and the explicit description of
Proposition \ref{WDexplicit} we have 
$$ ((R_{\pi}^{*,\rm ss})_{0}, L_{\pi}^{*}) \simeq (\sigma^{\rm ss}(\pi), L(\pi)).$$
Applying again Theorem \ref{theoLpi} to $c^{-j}I$ and using
compatibility with twisting (\ref{twist3}), we get for any $j\notin I$
$$ ((R_{\pi}^{*,\rm ss})_{j}, L_{\pi}^{*}) \simeq 
\nu_{W}^{j}\otimes (\sigma^{\rm ss}(\pi_{c^{-j}I}), L(\pi_{c^{-j}I}))
\simeq
(\sigma^{\rm ss}(\pi), L(\pi)).$$
Recalling now Proposition \ref{LJexplicite}, we eventually get 
$$ (R_{\pi}^{*}, L_{\pi}^{*})^{\rm ss} \simeq |\LJ(\pi)|\otimes(\sigma^{\rm ss}(\pi), L(\pi)),$$
as desired. 

In order to finish the proof of the Main Theorem, we still
have to deal with the case when $\pi$ is not elliptic. In this case we
must show that $R_{\pi}^{*}=0$. Here we use the full force of the
Vign\'eras-Zelevinski classification in \cite[V.12]{Viginduced}\footnote{A
  more detailed account of this classification will appear in a current
  work by Minguez and Secherre}. 
Following this classification, there is a proper parabolically
induced representation $\iota$ which contains $\pi$ as a subquotient
with multiplicity one, and all other subquotients $\pi'$ of which
satisfy the condition $\lambda_{\pi'}<\lambda_{\pi}$. Here,
$\lambda_{\pi}$ is the partition associated to $\pi$ via the
successive highest derivatives.
Hence, arguing by induction on $\lambda_{\pi}$, we see that it
suffices to prove that $R_{\iota}^{*}=0$.
Write $\iota=i_{P}(\tau)$ for some proper standard parabolic subgroup
$P=MU$ and some irreducible representation $\tau$. Then
$R_{\iota}^{*}=\bigoplus_{i=0}^{d-1}\ext{*}{r_{P}v_{\{1,\cdots,
    i\}}}{\tau}{M}$.
But since $\pi$ is not elliptic, the cuspidal support of $\tau$ 
is disjoint from $W.\delta$.
Therefore, item i) of Lemma \ref{lemext} shows that each Ext group
occuring in the above sum vanishes.

\alin{Remark on non-unipotent representations}
The Main Theorem may remain true for any irreducible
$\oFl$-representation $\pi$ of $G$, under the Coxeter congruence
hypothesis. In fact, much is already known ; Boyer has described the
cohomology of the whole tower and has announced recently that the
integral cohomology is torsion-less. This allows to split the full complex
according to weights.
Then our arguments, which are somehow inductive on the
``Whittaker level'', work fine for arbitrary elliptic representations,
except that the induction has to be initialized at some point. For
unipotent representations, the initialization was the computation of
$(R_{\oFl}^{*},L_{\pi}^{*})$ thanks to the period map. 

All in all, our arguments
show that the Main Theorem is true for \emph{any representation}
$\pi$, provided it holds true for \emph{any super-Speh
  representation}, in the sense of \cite[Def 2.2.3]{jlmodl}.

\end{document}